
\documentclass[12pt]{article}

\usepackage{amssymb,amsmath,amsthm}
\usepackage{times}

\newcommand{\dist}{\mathop{\mathrm{dist}}\nolimits}

\newcommand{\osc}{\mathop{\mathrm{osc}}}

\theoremstyle{plain}
\newtheorem{theorem}{Theorem}[section]
\newtheorem{lemma}[theorem]{Lemma}
\newtheorem{corollary}[theorem]{Corollary}
\newtheorem{lemmaprime}{Lemma}
\theoremstyle{definition}
\newtheorem{remark}[theorem]{Remark}
\newtheorem{definition}[theorem]{Definition}
\newtheorem{example}[theorem]{Example}

\numberwithin{equation}{section}
\title{%
 A gradient estimate for solutions
 to parabolic equations
 with discontinuous coefficients%
}
\author{%
 Jishan Fan\thanks{%
  Department of Applied Mathematics, 
  Nanjing Forestry University, 
  Nanjing 210037, P.\ R.\ China. 
 }, 
 Kyoungsun Kim\thanks{%
  Department of Mathematics, Ewha Womans University, 
  Seoul 120-750, Korea.
 }, 
 Sei Nagayasu\thanks{%
  Department of Mathematical Science, 
  Graduate School of Material Science, 
  University of Hyogo, 
  2167 Shosha, Himeji, Hyogo 671-2280, 
  Japan. 
  e-mail: sei[at]sci.u-hyogo.ac.jp 
 } \\ and Gen Nakamura\thanks{
  Department of Mathematics, Hokkaido University,
  North 10, West 8, Kita-Ku, Sapporo, Hokkaido 060-0810, 
  Japan. 
  e-mail: gnaka[at]math.sci.hokudai.ac.jp
 }%
}
\date{\empty}
\begin{document}
\maketitle
\begin{abstract}
Li-Vogelius and Li-Nirenberg gave
a gradient estimate for solutions of strongly elliptic equations and systems of divergence forms
with piecewise smooth coefficients, respectively. The discontinuities of the coefficients are assumed to be given by
manifolds of codimension 1, which we called them {\it manifolds of discontinuities}. Their gradient estimate is independent
of the distances between manifolds of discontinuities.
In this paper, we gave
a parabolic version of their results.
That is, we gave a gradient estimate
for parabolic equations of divergence forms
with piecewise smooth coefficients. The coefficients are assumed to be independent of time and their discontinuities are likewise the
previous elliptic equations.
As an application of this estimate,
we also gave a pointwise gradient estimate
for the fundamental solution
of a parabolic operator
with piecewise smooth coefficients. The both gradient estimates are independent
of the distances between manifolds of discontinuities.
\end{abstract}
\section{Introduction.}\label{section:introduction}

For strongly elliptic, second order scalar equations with real coefficients, it is well-known that their solutions have
the H{\"o}lder continuity even in the case that the coefficients are only bounded measurable functions.
However, the solutions do not have the Lipschitz continuity in general.
For example,
Piccinini-Spagnolo~\cite[p.\ 396, Example 1]{PiccininiSpagnolo}
and
Meyers~\cite[p.\ 204]{Meyers}
gave the following example:
\begin{example}{\rm (\cite{Meyers}, \cite{PiccininiSpagnolo})}
Let $B_1:=\{x \in \mathbb{R}^{n} : \lvert x \rvert < 1\}$ 
and each $a_{i j} \in L^\infty (B_1)$ be defined as
\[
a_{1 1}
= \frac{M x_{1}^{2} + x_{2}^{2}}{\lvert x \rvert^{2}} , \quad
a_{2 2} = \frac{x_{1}^{2} + M x_{2}^{2}}{\lvert x \rvert^{2}} , \quad
a_{1 2} = a_{2 1} =
\frac{(M-1) x_{1} x_{2}}{\lvert x \rvert^{2}}
\]
with a constant $M>1$. 
Then, if we define $u$ as
\begin{equation}\label{eq:example of u}
u (x) =
\lvert x \rvert^{1 / \sqrt{M}}
\frac{x_{1}}{\lvert x \rvert},
\end{equation}
it is easy to see that the H{\"o}lder exponent of $u$ is 
at least less than 
or equal to $1 / \sqrt{M}$ 
(indeed, for $\overline{x} = ( x_{1} , 0 )$ we have 
\begin{math}
\lvert u( \overline{x} ) - u(0) \rvert 
= \lvert \overline{x} \rvert^{1 / \sqrt{M}}
\end{math}. Hence we have 
\[
\frac{\lvert u( \overline{x} ) - u(0) \rvert}{
 \lvert \overline{x} \rvert^{( 1 / \sqrt{M} ) + \varepsilon}
} = \lvert \overline{x} \rvert^{- \varepsilon}
\rightarrow + \infty \mbox{ as } \overline{x} \rightarrow 0
\]
for any $\varepsilon > 0$.)
and $u$ satisfies the strongly elliptic scalar equation with real coefficients
\begin{equation}\label{eq:elliptic equation}
\sum_{i,j=1}^{2}
\frac{\partial}{\partial x_{i}} \left(
 a_{i j} \frac{\partial u}{\partial x_{j}}
\right) = 0.
\end{equation}

The same thing can be said also to the parabolic equation
\begin{equation}\label{eq:parabolic equation}
\frac{\partial u}{\partial t}
- \sum_{i,j=1}^{2}
\frac{\partial}{\partial x_{i}} \left(
 a_{i j} \frac{\partial u}{\partial x_{j}}
\right) = 0,
\end{equation}
because $u$ given by \eqref{eq:example of u} satisfies this equation.
\end{example}
This example shows that we cannot expect gradient estimates of solutions to equations \eqref{eq:elliptic equation} and \eqref{eq:parabolic equation}
in the case $a_{ij}\in L^\infty(B_1)$, but we may have the estimates in the case of piecewise $C^\mu$ (see \eqref{eq:piecewiseCmu} below) coefficients.

The fact that the gradient estimate of solutions is independent of the distances between manifolds of discontinuities was first observed by
Babu{\v{s}}ka-Andersson-Smith-Levin~\cite{Babuska}
numerically
for certain homogeneous isotropic linear systems of elasticity,
that is $\lvert \nabla u \rvert$
is bounded independently of the distances between manifolds of discontinuities. They considered that this numerical property of solutions is mathematically true. This is
the so-called Babu{\v{s}}ka's conjecture. Recently, \cite{LiVogelius} and \cite{LiNirenberg}
gave mathematical proofs for this conjecture. In elasticity, a small static deformation of an elastic medium with inclusions can be described by an elliptic system of divergence form with piecewise smooth coefficients. The discontinuities of coefficients form the boundaries of inclusions.
Similar physical interpretation is also possible for heat conductors.
Our main theorem \ref{theorem:main} given below ensures that
this property also holds for parabolic equations of the form \eqref{eq:parabolic equation}. The details of result given in \cite{LiVogelius} and \cite{LiNirenberg} for scalar equations will be given below as Theorem \ref{theorem:LN}.

In order to state our main theorem, we begin with introducing several notations which will be used throughout this paper.
Let $D \subset \mathbb{R}^{n}$ be a bounded domain
with a $C^{1, \alpha}$ boundary for some $0 < \alpha < 1$, which means
that the domain $D$ contains $L$ disjoint subdomains
$D_{1} , \ldots , D_{L}$
with $C^{1, \alpha}$ boundaries, i.e.
$D = ( \bigcup_{m=1}^{L} \overline{D_{m}} ) \setminus \partial D$,
and we also assume that
$\overline{D_{m}} \subset D$ for $1 \leq m \leq L-1$. Physically, $D$ is a material and $D_m\,(1\le m\le L-1)$ are
considered as inclusions in $D$.
We define the $C^{1, \alpha}$ norm 
(resp.\ $C^{1, \alpha}$ seminorm)
of $C^{1, \alpha}$ domain $D_{m}$
in the same way as in~\cite{LiNirenberg},
that is, as the largest positive number $a$
such that in the $a$-neighborhood of every point of
$\partial D_{m}$, identified as $0$
after a possible translation and rotation
of the coordinates so that $x_{n} = 0$ is the tangent to
$\partial D_{m}$ at $0$,
$\partial D_{m}$ is given by the graph of a $C^{1, \alpha}$ function $\psi_{m}$,
defined in $\lvert x^{\prime} \rvert < 2 a$
($x^{\prime} = ( x_{1} , \ldots , x_{n-1} )$),
the $2 a$-neighborhood of $0$ in the tangent plane, and it satisfies the estimate
\begin{math}
\lVert \psi_{m} \rVert_{
 C^{1, \alpha} ( \lvert x^{\prime} \rvert < 2 a )
} \leq 1 / a
\end{math}
(resp.\ 
\begin{math}
[ \psi_{m} ]_{
 C^{1, \alpha} ( \lvert x^{\prime} \rvert < 2 a )
} \leq 1 / a
\end{math}), 
where 
\begin{align*}
[ \psi ]_{
 C^{1, \alpha} ( \lvert x^{\prime} \rvert < 2 a )
} 
& := \sup_{
 \lvert x^{\prime} \rvert , \lvert \xi^{\prime} \rvert < 2 a
} \frac{\lvert
 \nabla^{\prime} \psi ( x^{\prime} )
 - \nabla^{\prime} \psi ( \xi^{\prime} ) 
\rvert}{\lvert x^{\prime} - \xi^{\prime} \rvert^{\alpha}} , 
\displaybreak[1] \\
\lVert \psi \rVert_{
 C^{1, \alpha} ( \lvert x^{\prime} \rvert < 2 a )
} 
& := \lVert \psi \rVert_{C^{1} ( \lvert x^{\prime} \rvert < 2 a )}
+ [ \psi ]_{
 C^{1, \alpha} ( \lvert x^{\prime} \rvert < 2 a )
} . 
\end{align*}

Further, let $( a_{i j} )$ be a symmetric, positive definite matrix-valued function
defined on $D$ satisfying
\begin{equation}\label{eq:aijxiixij}
\lambda \lvert \xi \rvert^{2}
\leq \sum_{i,j=1}^{n} a_{i j} (x) \xi_{i} \xi_{j}
\leq \Lambda \lvert \xi \rvert^{2} .
\end{equation}
Here each $a_{i j}$ is piecewise $C^{\mu}$ in $D$,
$0 < \mu < 1$, that is
\begin{equation}\label{eq:piecewiseCmu}
a_{i j} (x) = a_{i j}^{(m)} (x) \mbox{ for } x \in D_{m} , \
1 \leq m \leq L
\end{equation}
with $a_{i j}^{(m)} \in C^{\mu} ( \overline{D_{m}} )$.

As we have already mentioned above, we will discuss in this paper a gradient estimate
for solutions to parabolic equations
with piecewise smooth coefficients.
Our result is a parabolic version
for the results of Li-Vogelius~\cite{LiVogelius} and the scalar equations version of Li-Nirenberg~\cite{LiNirenberg}.
They showed that solutions $u\in H^1(D)$
to the elliptic equation
\begin{equation}\label{eq:elliptic}
\sum_{i,j=1}^{n} \frac{\partial}{\partial x_{i}}
\left( a_{i j} \frac{\partial u}{\partial x_{j}} \right)
= h + \sum_{i=1}^{n} \frac{\partial g_{i}}{\partial x_{i}},
\end{equation}
where $h\in L^\infty(D)$ and each $g_i$ is defined in $D$ such that $g_i|_{D_m}\,\,(1\le m\le L)$ have continuous extensions $\in C^\mu(\overline{D_m}),\,0<\mu<1$ up to $\partial D_m$ have global $W^{1, \infty}$
and piecewise $C^{1, \alpha^{\prime}}$ estimates (see \eqref{eq:ellipticest} below).
These estimates are independent of the distances
between inclusions
when a material has inclusions.

\medskip
We first give the result of Li-Nirenberg~\cite{LiNirenberg} for scalar equations.

\begin{theorem}[{\cite[Theorem~1.1]{LiNirenberg}}]\label{theorem:LN}
For any $\varepsilon > 0$,
there exists a constant $C_{\sharp} > 0$ such that
for any $\alpha^{\prime}$ satisfying
\[
0 < \alpha^{\prime}
< \min \left\{ \mu , \frac{\alpha}{2 ( \alpha + 1 )} \right\} ,
\]
we have
\begin{equation}\label{eq:ellipticest}
\sum_{m=1}^{L} \lVert u \rVert_{
 C^{1, \alpha^{\prime}} ( \overline{D_{m}} \cap D_{\varepsilon} )
} \leq C_{\sharp} \left(
 \lVert u \rVert_{L^{2} (D)}
 + \lVert h \rVert_{L^{\infty} (D)}
 + \sum_{m=1}^{L} \sum_{i=1}^{n} \lVert g_{i} \rVert_{
  C^{\alpha^{\prime}} ( \overline{D_{m}} )
 }
\right) ,
\end{equation}
where we denote
\[
D_{\varepsilon} := \{
 x \in D : \dist ( x, \partial D ) > \varepsilon
\}
\]
and a positive constant $C_{\sharp}$ depends only on
\begin{math}
n, L, \mu, \alpha , \varepsilon , \lambda , \Lambda ,
\lVert a_{i j} \rVert_{C^{\alpha^{\prime}} ( \overline{D_{m}})}
\end{math}
and the $C^{1, \alpha^{\prime}}$ norms of $D_{m}$.
\end{theorem}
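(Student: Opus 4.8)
\medskip
\noindent\textbf{Proof proposal.}
The plan is to prove \eqref{eq:ellipticest} by localization combined with an iterative perturbation scheme at geometric scales, in the spirit of Campanato. Since $D_\varepsilon$ is compactly contained in $D$ and can be covered by finitely many balls whose number depends only on $n$, $\varepsilon$ and $D$, it suffices to bound $\sum_m \lVert u \rVert_{C^{1,\alpha'}(\overline{D_m}\cap B)}$ on each such ball $B$. Away from all the interfaces $\bigcup_m \partial D_m$ the coefficients are $C^\mu$, so the classical interior Schauder and $W^{1,p}$ estimates, together with the De~Giorgi--Nash $C^\beta$ bound for $u$ (which already controls $\lVert u\rVert_{L^\infty}$ by $\lVert u\rVert_{L^2(D)}$ and the data), give the claim there with constants depending only on the listed quantities. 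Hence the whole difficulty is concentrated near the interfaces, and more precisely in obtaining a bound that does \emph{not} deteriorate as two or more of the $\partial D_m$ come close to one another; this is exactly Babu{\v{s}}ka's conjecture.

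First I would reduce, by subtracting an explicit particular solution and by a change of variables, to a clean model near a boundary point. Using $h\in L^\infty(D)$ and the piecewise $C^{\alpha'}$ regularity of the $g_i$, one absorbs the right-hand side of \eqref{eq:elliptic} into a perturbation contributing only the lower-order norms appearing on the right of \eqref{eq:ellipticest}. Near a point $x_0\in\partial D_m$ one then applies a $C^{1,\alpha}$ diffeomorphism that simultaneously straightens the finitely many, suitably ordered, sheets of $\bigcup_m\partial D_m$ passing near $x_0$ into parallel hyperplanes. This turns \eqref{eq:elliptic} into a transmission problem across these hyperplanes with coefficients still piecewise $C^{\alpha'}$; the loss incurred by a diffeomorphism that is only as regular as the interface is precisely what forces $\alpha' < \alpha/(2(\alpha+1))$ (after rescaling by a factor $r$ the composed coefficients and the lower-order error terms are controlled only up to a power $r^{\alpha/(1+\alpha)}$, with a further interpolation loss).

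The core is a one-step decay estimate for this model: there exist $\theta\in(0,1)$ and $C>0$, depending only on $n$, $L$, the ellipticity constants $\lambda,\Lambda$, the $C^{\alpha'}$ norms of the coefficients and the $C^{1,\alpha'}$ norms of the $D_m$ --- but \emph{not} on the mutual distances of the straightened interfaces --- such that any solution $u$ can be approximated, in each $D_m\cap B_\theta$, by a piecewise-linear function compatible with the transmission conditions, with error bounded by $\theta^{1+\alpha'}$ times the size of $u$ on $B_1$ plus the data. Iterating this at the scales $\theta^k$ and summing the resulting geometric series yields piecewise $C^{1,\alpha'}$ control of $u$ (in particular a global $W^{1,\infty}$ bound), which is \eqref{eq:ellipticest}.

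The main obstacle is proving this one-step estimate with a constant independent of the inter-interface distances. I would argue by compactness and contradiction: if it failed there would be sequences of interface configurations (with some distances possibly tending to $0$), of coefficients and of solutions violating the estimate; normalizing and passing to the limit, the interfaces converge to a finite family of parallel hyperplanes (some gaps having collapsed) and the coefficients to piecewise constants, while the solutions converge in $C^\beta$ by De~Giorgi--Nash. In every case the limit solves a constant-coefficient transmission problem across finitely many parallel hyperplanes in a ball --- or, after a further blow-up, in all of $\mathbb{R}^n$ --- for which a Liouville-type theorem forces the limit to be piecewise linear in the normal variable, contradicting the failure of the estimate. The delicate points are the bookkeeping of the ordering and labelling of the sheets so that no region is lost when gaps collapse, and the uniform $C^\beta$ compactness needed to extract the limit; by contrast the flat-interface, piecewise-constant transmission problem is essentially one-dimensional in the normal direction, and its estimates reduce to an explicit computation, which is exactly why they are uniform in the gaps.
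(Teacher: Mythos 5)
This statement is quoted in the paper as \cite[Theorem~1.1]{LiNirenberg} and is not proved there: the authors invoke it as a black box inside the proof of Theorem~\ref{theorem:main}. So there is no ``paper's own proof'' to compare against; the relevant comparison is with the Li--Vogelius/Li--Nirenberg arguments themselves.

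Measured against those, your sketch has the right outline (localize, straighten the interfaces, establish a one-step Campanato-type decay, iterate, sum the geometric series), and the heuristic about the exponent $\alpha/(2(\alpha+1))$ arising from the $C^{1,\alpha}$ flattening plus an interpolation loss is in the right direction. But the way you propose to obtain the crucial one-step decay is genuinely different from what Li--Vogelius and Li--Nirenberg do. They prove the decay \emph{constructively}: at each scale they compare $u$ with an explicitly built solution of the frozen-coefficient, flat-interface transmission problem (essentially piecewise linear in the normal direction, with the jump coefficients computed from $(a_{ij})$), and they obtain the error bound by direct energy/regularity estimates, with all constants tracked quantitatively. You instead propose a compactness--contradiction argument, passing to a limiting configuration in which some gaps have collapsed and invoking a Liouville theorem for the resulting transmission problem across parallel hyperplanes. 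That route is plausible, but as written it leaves the hard parts as remarks rather than lemmas: (i) the Liouville theorem for the piecewise-constant parallel-hyperplane transmission problem (needed after blow-up) is asserted, not proved; (ii) the compactness of the admissible class of interface configurations when gaps collapse --- including the identification of which $D_m$ merge and the persistence of the $C^{1,\alpha}$ and ellipticity bounds in the limit --- is exactly the kind of degeneration the uniform estimate is supposed to rule out, so it must be handled with care rather than by appeal to ``bookkeeping''; and (iii) the reduction you describe for points ``away from the interfaces'' is too quick: interior Schauder at distance $d$ from an interface degenerates like a power of $1/d$, so that region does not actually split off, and one must run the transmission analysis at \emph{every} point of $D_\varepsilon$ at all scales below the distance to $\partial D$. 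In short, your proposal is a coherent alternative strategy, but it is not the Li--Nirenberg proof, and its central lemma is currently only a claim; the constructive comparison-function argument of Li--Vogelius/Li--Nirenberg is precisely what delivers the distance-independent constant without the compactness machinery.
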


\begin{remark}
The constant $C_{\sharp} > 0$
is independent of the distances between inclusions $D_{m}$.
Therefore, the estimate (\ref{eq:ellipticest}) holds
even in the case that some of inclusions touch another inclusions
as in Figure~\ref{figure:1}. 
\end{remark}

\begin{figure}
\begin{center}
\begin{picture}(240,120)
\qbezier(48,15.6024)(96,3.6)(176,3.6)
\qbezier(176,3.6)(240,3.6)(240,45.5976)
\qbezier(240,45.5976)(240,75.6)(192,93.6)
\qbezier(192,93.6)(144,111.6)(80,111.6)
\qbezier(80,111.6)(0,111.6)(0,69.6024)
\qbezier(0,69.6024)(0,27.5976)(48,15.6024)
\qbezier(34.6,47.8012)(44.2,41.8)(60.2,41.8)
\qbezier(60.2,41.8)(73,41.8)(73,62.7988)
\qbezier(73,62.7988)(73,77.8)(63.4,86.8)
\qbezier(63.4,86.8)(53.8,95.8)(41,95.8)
\qbezier(41,95.8)(25,95.8)(25,74.8012)
\qbezier(25,74.8012)(25,53.7988)(34.6,47.8012)
\put(49,64){\makebox(0,0){$D_{1}$}}
\qbezier(102.72,38.24096)(95.04,33.44)(82.24,33.44)
\qbezier(82.24,33.44)(72,33.44)(72,50.23904)
\qbezier(72,50.23904)(72,62.24)(79.68,69.44)
\qbezier(79.68,69.44)(87.36,76.64)(97.6,76.64)
\qbezier(97.6,76.64)(110.4,76.64)(110.4,59.84096)
\qbezier(110.4,59.84096)(110.4,43.03904)(102.72,38.24096)
\put(92,56){\makebox(0,0){$D_{2}$}}
\qbezier(158.72,73.75904)(151.04,78.56)(138.24,78.56)
\qbezier(138.24,78.56)(128,78.56)(128,61.76096)
\qbezier(128,61.76096)(128,49.76)(135.68,42.56)
\qbezier(135.68,42.56)(143.36,35.36)(153.6,35.36)
\qbezier(153.6,35.36)(166.4,35.36)(166.4,52.15904)
\qbezier(166.4,52.15904)(166.4,68.96096)(158.72,73.75904)
\put(148,44){\makebox(0,0){$D_{3}$}}
\qbezier(154.997793341,67.4460357714)(153.9798989874,71.8587214973)(149.4544155878,76.3842048969)
\qbezier(149.4544155878,76.3842048969)(145.8340288681,80.0045916166)(139.8946713174,74.0652340658)
\qbezier(139.8946713174,74.0652340658)(135.651691219,69.8222539674)(135.8213968465,64.5613795154)
\qbezier(135.8213968465,64.5613795154)(135.9911024739,59.3005050634)(139.6114891936,55.6801183437)
\qbezier(139.6114891936,55.6801183437)(144.1369725932,51.1546349441)(150.0763301439,57.0939924948)
\qbezier(150.0763301439,57.0939924948)(156.0167059284,63.0343682793)(154.997793341,67.4460357714)
\put(144,64){\makebox(0,0){$D_{4}$}}
\qbezier(206.976,74.303616)(200.832,76.224)(190.592,76.224)
\qbezier(190.592,76.224)(182.4,76.224)(182.4,69.504384)
\qbezier(182.4,69.504384)(182.4,64.704)(188.544,61.824)
\qbezier(188.544,61.824)(194.688,58.944)(202.88,58.944)
\qbezier(202.88,58.944)(213.12,58.944)(213.12,65.663616)
\qbezier(213.12,65.663616)(213.12,72.384384)(206.976,74.303616)
\put(200,66.4){\makebox(0,0){$D_{5}$}}
\qbezier(206.976,41.1643392)(200.832,43.4688)(190.592,43.4688)
\qbezier(190.592,43.4688)(182.4,43.4688)(182.4,35.4052608)
\qbezier(182.4,35.4052608)(182.4,29.6448)(188.544,26.1888)
\qbezier(188.544,26.1888)(194.688,22.7328)(202.88,22.7328)
\qbezier(202.88,22.7328)(213.12,22.7328)(213.12,30.7963392)
\qbezier(213.12,30.7963392)(213.12,38.8612608)(206.976,41.1643392)
\put(200,31.68){\makebox(0,0){$D_{6}$}}
\put(148,20){\makebox(0,0){$D_{7}$}}
\end{picture}
\end{center}
\caption{The case that an inclusion touches another inclusion.
($L=7$)\label{figure:1}}
\end{figure}
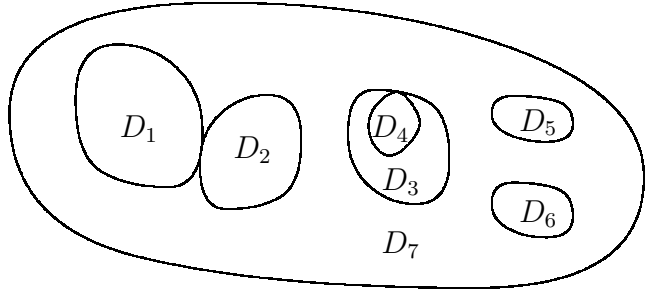%

Now, we consider the parabolic equation
\begin{equation}\label{eq:parabolic}
\frac{\partial u}{\partial t}
- \sum_{i,j=1}^{n} \frac{\partial}{\partial x_{i}}
\left( a_{i j} \frac{\partial u}{\partial x_{j}} \right)
= f - \sum_{i=1}^{n} \frac{\partial f_{i}}{\partial x_{i}}
\mbox{ in } Q,
\end{equation}
where
\begin{align*}
& f\in L^\infty(Q),\,\frac{\partial f}{\partial t}\in L^\kappa(Q), 
\displaybreak[1] \\
& f_{i}\in L^p(Q),\,\frac{\partial f_i}{\partial t}\in L^p(Q) \mbox{ and }
f_{i} = f_{i}^{(m)} \mbox{ on } D_{m} \times (0,T],
\end{align*}
with $p>n+2$, $\kappa = p(n+2)/(n+2+p)$,
$Q := D \times (0,T]$
and
$f_{i}^{(m)} \in L^{\infty} ( 0, T; C^{\mu} ( \overline{D_{m}} ) ) $.

Now we define a weak solution to the equation (\ref{eq:parabolic}). 
\begin{definition}
We call 
\begin{math}
u \in V_{2}^{1,0} (Q) 
:= L^{2} ( 0,T; H^{1} (D) ) 
\cap C ( [0,T]; L^{2} (D) )
\end{math}
a weak solution to the equation (\ref{eq:parabolic}) 
when 
\begin{align}
& \int_{D} u( x, t^{\prime} ) \, \varphi ( x, t^{\prime} ) \, d x 
- \int_{0}^{t^{\prime}} \int_{D}
 u( x, t ) \, \frac{\partial \varphi}{\partial t} ( x, t ) \, 
d x \, d t \notag \\
& \mbox{}
+ \int_{0}^{t^{\prime}} \int_{D}
 \sum_{i,j=1}^{n} a_{i j} (x) \, 
 \frac{\partial u}{\partial x_{j}} (x,t) \, 
 \frac{\partial \varphi}{\partial x_{i}} (x,t) \, 
d x \, d t \notag \\
& = \int_{0}^{t^{\prime}} \int_{D}
 f(x,t) \, \varphi (x,t) \, 
d x \, d t 
+ \int_{0}^{t^{\prime}} \int_{D}
 \sum_{i=1}^{n} f_{i} (x,t) \, 
 \frac{\partial \varphi}{\partial x_{i}} (x,t) \, 
d x \, d t \label{eq:weaksolution}
\end{align}
for any 
\begin{math}
\varphi \in L^{2} ( 0, T; \mathring{H}^{1} (D) )
\cap H^{1} ( 0, T; L^{2} (D) )
\end{math}
with $\varphi ( \cdot , 0 ) = 0$
and $0 < t^{\prime} \leq T$. 
\end{definition}

Our main result is as follows.

\begin{theorem}[Main theorem]\label{theorem:main}
Any weak solutions $u \in V_{2}^{1,0} (Q)$ 
to {\rm (\ref{eq:parabolic})}
have the following up to the inclusion boundary regularity estimate:
For any $\varepsilon > 0$,
there exists a constant $C_{\sharp}^{\prime} > 0$
such that for any $\alpha^{\prime}$ satisfying
\begin{equation}\label{eq:alphaprime}
0 < \alpha^{\prime}
< \min \left\{ \mu , \frac{\alpha}{2 ( \alpha + 1 )} \right\} ,
\end{equation}
we have
\[
\sum_{m=1}^{L}
\sup_{\varepsilon^{2} < t \leq T}
\lVert u ( \cdot , t ) \rVert_{
 C^{1, \alpha^{\prime}} ( \overline{D_{m}} \cap D_{\varepsilon} )
} \leq C_{\sharp}^{\prime}
\left(
 \lVert u \rVert_{L^{2} (Q)} + F_{\ast} + F_{\ast \ast}
\right) ,
\]
where
\begin{align*}
F_{\ast}
& := \lVert f \rVert_{L^{\kappa} (Q)}
+ \lVert f \rVert_{
 L^{\max \{ 2, \kappa \}} (Q)
} + \lVert f \rVert_{L^{\infty} (Q)}
+ \left\lVert
 \frac{\partial f}{\partial t}
\right\rVert_{L^{\kappa} (Q)} , \displaybreak[1] \\
F_{\ast \ast}
& := \sum_{i=1}^{n} \Biggl(
 \lVert f_{i} \rVert_{L^{p} (Q)}
 + \left\lVert
  \frac{\partial f_{i}}{\partial t}
 \right\rVert_{L^{2} (Q)} 
 + \left\lVert
  \frac{\partial f_{i}}{\partial t}
 \right\rVert_{L^{p} (Q)} \\
 & \hspace*{20ex} \mbox{}
 + \sum_{m=1}^{L} \sup_{0 < t \leq T}
 \lVert f_{i} ( \cdot , t ) \rVert_{
  C^{\alpha^{\prime}} ( \overline{D_{m}} )
 }
\Biggr)
\end{align*}
and
$C_{\sharp}^{\prime}$ depends only on
\begin{math}
n, L, \mu, \alpha , \varepsilon , \lambda , \Lambda , p,
\lVert a_{i j} \rVert_{C^{\alpha^{\prime}} ( \overline{D_{m}})}
\end{math}
and the $C^{1, \alpha^{\prime}}$ norms of $D_{m}$.
\end{theorem}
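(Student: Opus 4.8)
The plan is to reduce the parabolic estimate to the elliptic one (Theorem~\ref{theorem:LN}) by freezing time and treating $\partial u/\partial t$ as a right-hand side forcing term, combined with a bootstrap in the time variable. First I would establish enough a~priori regularity in time so that the elliptic theorem can be applied slice-by-slice. Concretely, I would show that for a weak solution $u\in V_2^{1,0}(Q)$ the time derivative $v:=\partial u/\partial t$ is itself (after the initial layer $t>\varepsilon^2/4$, say) a weak solution of a parabolic equation of the same type: differentiating \eqref{eq:parabolic} in $t$ (which is legitimate because the coefficients $a_{ij}$ are time-independent) gives
\[
\frac{\partial v}{\partial t}
- \sum_{i,j=1}^{n}\frac{\partial}{\partial x_i}\Bigl(a_{ij}\frac{\partial v}{\partial x_j}\Bigr)
= \frac{\partial f}{\partial t}-\sum_{i=1}^{n}\frac{\partial}{\partial x_i}\frac{\partial f_i}{\partial t}\quad\text{in }Q,
\]
whose data lie in the required spaces by hypothesis ($\partial f/\partial t\in L^\kappa$, $\partial f_i/\partial t\in L^2\cap L^p$). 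Standard interior parabolic $L^p$ / De Giorgi--Nash--Moser theory (applied away from $\partial D$, using cutoffs in $x$ and $t$) then yields, for $t>\varepsilon^2/2$, bounds of the form
$\|v(\cdot,t)\|_{L^\infty(D_{\varepsilon/2})}\le C(\|u\|_{L^2(Q)}+F_\ast+F_{\ast\ast})$, and more generally $\sup_t\|v(\cdot,t)\|_{L^q(D_{\varepsilon/2})}$ for $q$ as large as the data allow; here the choice $\kappa=p(n+2)/(n+2+p)$ and $p>n+2$ is exactly what makes the parabolic Sobolev embeddings close so that one reaches $L^\infty$ in $v$ (hence $v$ is an admissible $L^\infty$ right-hand side for the elliptic theorem).

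The second step is the slicing argument. Fix $t\in(\varepsilon^2,T]$. Then $w:=u(\cdot,t)$ solves the elliptic equation
\[
\sum_{i,j=1}^n\frac{\partial}{\partial x_i}\Bigl(a_{ij}\frac{\partial w}{\partial x_j}\Bigr)
= \Bigl(v(\cdot,t)-f(\cdot,t)\Bigr)+\sum_{i=1}^n\frac{\partial}{\partial x_i}\bigl(-f_i(\cdot,t)\bigr)
\quad\text{in } D_{\varepsilon/2},
\]
which has exactly the form \eqref{eq:elliptic} on the subdomain $D_{\varepsilon/2}$ with $h=v(\cdot,t)-f(\cdot,t)\in L^\infty$ and $g_i=-f_i(\cdot,t)$, whose pieces $g_i|_{D_m}$ have $C^\mu$ (hence $C^{\alpha'}$) extensions with norm controlled by $\sup_t\|f_i(\cdot,t)\|_{C^{\alpha'}(\overline{D_m})}\le F_{\ast\ast}$. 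Applying Theorem~\ref{theorem:LN} on $D_{\varepsilon/2}$ (with its own $\varepsilon/2$ so the output set contains $\overline{D_m}\cap D_\varepsilon$) gives
\[
\sum_{m=1}^L\|w\|_{C^{1,\alpha'}(\overline{D_m}\cap D_\varepsilon)}
\le C_\sharp\Bigl(\|w\|_{L^2(D_{\varepsilon/2})}+\|h(\cdot,t)\|_{L^\infty}+\sum_{m,i}\|g_i(\cdot,t)\|_{C^{\alpha'}(\overline{D_m})}\Bigr),
\]
and crucially $C_\sharp$ here is the elliptic constant, which does \emph{not} depend on the inter-inclusion distances nor on $t$. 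Taking $\sup_{\varepsilon^2<t\le T}$ and inserting the bounds from step one for $\|u(\cdot,t)\|_{L^2}$, $\|v(\cdot,t)\|_{L^\infty}$, $\|f(\cdot,t)\|_{L^\infty}$, and the pieces of $f_i$ produces the asserted estimate, with $C_\sharp'$ depending only on the same quantities plus $p$.

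The main obstacle I expect is step one: showing that $v=\partial u/\partial t$ has the requisite $L^\infty$ (in $x$, uniformly in $t$ away from the initial layer) bound starting from a mere $V_2^{1,0}$ weak solution, with constants that track only $\|u\|_{L^2(Q)}$ and the data norms. This requires (i) justifying that $v$ is a genuine weak solution of the differentiated equation — most cleanly by a difference-quotient argument in $t$, using the time-independence of $a_{ij}$ and the energy/Steklov-average machinery of Ladyzhenskaya--Solonnikov--Ural'tseva — and (ii) running the parabolic regularity ladder: first an interior energy estimate to get $v\in V_2^{1,0}$ locally, then interior parabolic $L^p$ estimates and/or Moser iteration to upgrade to $L^\infty$, carefully chaining the parabolic Sobolev embedding $V_2^{1,0}\hookrightarrow L^{2(n+2)/n}$ so that the exponents $p>n+2$ and $\kappa=p(n+2)/(n+2+p)$ close the loop. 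One also needs a companion bound $\sup_t\|u(\cdot,t)\|_{L^2(D_{\varepsilon/2})}\le C\|u\|_{L^2(Q)}+\cdots$, which follows from the standard energy inequality for \eqref{eq:parabolic}. Everything else — the elliptic slicing and the distance-independence — is inherited directly from Theorem~\ref{theorem:LN} and carries no new difficulty.
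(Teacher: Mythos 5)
Your proposal follows essentially the same route as the paper: differentiate \eqref{eq:parabolic} in $t$ (justified via Steklov averages since $a_{ij}$ is time-independent), establish interior parabolic estimates giving $\partial u/\partial t \in L^\infty$ away from $\partial D$ and the initial layer, then freeze $t$ and invoke the elliptic estimate of Theorem~\ref{theorem:LN} with $\partial u/\partial t - f$ as the $L^\infty$ source. The paper packages the parabolic-regularity step you flag as ``the main obstacle'' into Lemma~\ref{lemma:keyestimate} (proved in Section~\ref{section:someestimates} and the Appendix by Caccioppoli-type energy estimates and De~Giorgi iteration, following \cite{LRU}), and runs exactly the two-stage chain you describe — first $\lVert \partial_t u\rVert_{L^2}$ on $\widehat Q_{\varepsilon_1}$ via \eqref{eq:dudtL2uL2}, then $\lVert \partial_t u\rVert_{L^\infty}$ on the smaller $\widehat Q_{\varepsilon_2}$ via \eqref{eq:Linfty} applied to the differentiated equation — before slicing; this matches your outline including the auxiliary $\sup_t\lVert u(\cdot,t)\rVert_{L^2}$ bound from \eqref{eq:LinftyL2}.
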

\begin{remark}\label{remark:main}
(i)
Again,
the constant $C_{\sharp}^{\prime} > 0$
is independent of the distances between inclusions $D_{m}$.
Then Theorem~\ref{theorem:main} holds
even in the case that an inclusion touches another inclusion
as Figure~\ref{figure:1}.

(ii)
It is easy to obtain
\begin{align*}
F_{\ast}
& \leq C^{\ast}
\left(
 \lVert f \rVert_{L^{\infty} (Q)}
 + \left\lVert
  \frac{\partial f}{\partial t}
 \right\rVert_{L^{\kappa} (Q)}
\right) , \displaybreak[1] \\
F_{\ast \ast}
& \leq C^{\ast} \sum_{i=1}^{n} \left(
 \sum_{m=1}^{L} \sup_{0 < t \leq T}
 \lVert f_{i} ( \cdot , t ) \rVert_{
  C^{\alpha^{\prime}} ( \overline{D_{m}} )
 }
 + \left\lVert
  \frac{\partial f_{i}}{\partial t}
 \right\rVert_{L^{p} (Q)}
\right) .
\end{align*}
However, a constant $C^{\ast} > 0$
depends on $T$ and $D$, unfortunately.
\end{remark}

For heat conductive materials with inclusions, \eqref{eq:parabolic} describes the temperature distribution in the materials. When these inclusions are unknown and need to be identified, thermography is one of non-destructive testing which identifies these inclusions. The measurement for the thermography could be temperature distribution at the boundary generated by injecting heat flux at the boundary.
The mathematical analysis for this thermography has not yet been developed so far. However, if we have enough measurements, the so called dynamical probe method (\cite{IKN}) can give a mathematically rigorous way to identify these inclusions. In the proof of justifying this method, the gradient estimate of the fundamental solution of parabolic equation with non-smooth coefficient is one of the essential ingredients.

The dynamical probe method has been developed only for the case that the inclusions do not touch another inclusions. So, it is natural to consider the case when some of them touch. For the first task to handle this case, we need to have the gradient estimate of the fundamental solution. Our main result has given the answer to this. Similar situation can be considered for stationary thermography and
non-destructive testing using acoustic waves. For example, \cite{NUW} and \cite{Yoshida}
effectively used a result of Li-Vogelius~\cite{LiVogelius} to give a procedure of reconstructing inclusions
by enclosure method (see \cite{Ikehata}, for example). What is interested about their arguments is that, by adding further arguments, we can
even reconstruct the inclusions in the case that they can touch another inclusions (\cite{Nag-Nak}). Therefore, we believe that
our gradient estimates will be useful for inverse problems identifying unknown inclusions.

The rest of this paper is organized as follows.
In Section~\ref{section:proof},
we prove our main theorem, i.e.\ Theorem~\ref{theorem:main}
by applying Lemma~\ref{lemma:keyestimate}.
We prove Lemma~\ref{lemma:keyestimate}
in Section~\ref{section:someestimates}.
In Section~\ref{section:gradestoffundsol},
we consider a pointwise gradient estimate
for the fundamental solution
of parabolic operators
with piecewise smooth coefficients
by applying Theorem~\ref{theorem:main}.
\section{Proof of main result.}\label{section:proof}
In this section, we prove our main theorem.
We first state some estimates in Lemma~\ref{lemma:keyestimate}
which we need to prove our main theorem.
We prove Lemma~\ref{lemma:keyestimate}
in Section~\ref{section:someestimates}.
\begin{lemma}\label{lemma:keyestimate}
Let $( a_{i j} )$ be a matrix-valued function defined on $D$.
Assume that $( a_{i j} )$ is symmetric, positive definite,
and satisfies the condition {\rm (\ref{eq:aijxiixij})}.
Let $Q$ as before and
\begin{math}
\widehat{Q}_{\varepsilon}
:= D_{\varepsilon} \times ( \varepsilon^{2} , T ]
\end{math}.
Then for $p>n+2$,
a weak solution $u \in V_{2}^{1,0} (Q)$ 
to {\rm (\ref{eq:parabolic})} satisfies
the following estimates:
\begin{align}
\sup_{\varepsilon^{2} < t \leq T}
\lVert u ( \cdot , t ) \rVert_{L^{2} ( D_{\varepsilon} )}
& \leq C \left( \lVert u \rVert_{L^{2} (Q)} + F_{0} \right) ,
\label{eq:LinftyL2} \displaybreak[1] \\
\lVert u \rVert_{L^{\infty} ( \widehat{Q}_{\varepsilon} )}
& \leq C \left( \lVert u \rVert_{L^{2} (Q)} + F_{0} \right) ,
\label{eq:Linfty} \displaybreak[1] \\
\left\lVert \frac{\partial u}{\partial t} \right\rVert_{
 L^{2} ( \widehat{Q}_{\varepsilon})
}
& \leq C \left( \lVert u \rVert_{L^{2} (Q)} + F_{1} \right) ,
\label{eq:dudtL2uL2}
\end{align}
where we set
\begin{align}
F_{0} & :=
\lVert f \rVert_{L^{\frac{p(n+2)}{n+2+p}} (Q)}
+ \sum_{i=1}^{n} \lVert f_{i} \rVert_{L^{p} (Q)} , 
\label{eq:defOfF0} \displaybreak[1] \\
F_{1} & :=
\lVert f \rVert_{
 L^{\max \left\{ 2, \frac{p(n+2)}{n+2+p} \right\}} (Q)
}
+ \sum_{i=1}^{n} \left(
 \lVert f_{i} \rVert_{L^{p} (Q)}
 + \left\lVert
  \frac{\partial f_{i}}{\partial t}
 \right\rVert_{L^{2} (Q)}
\right) , \label{eq:defOfF1}
\end{align}
and $C>0$ depends only on $n, \lambda , \Lambda , p$
and $\varepsilon$.
\end{lemma}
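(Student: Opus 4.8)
\noindent\emph{Outline of the proposed proof of Lemma~\ref{lemma:keyestimate}.}

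All three bounds \eqref{eq:LinftyL2}--\eqref{eq:dudtL2uL2} are interior a priori estimates: they use nothing about $(a_{ij})$ beyond the ellipticity \eqref{eq:aijxiixij} --- and, for \eqref{eq:dudtL2uL2}, the fact that $(a_{ij})$ does not depend on $t$ --- and in particular they are insensitive to the geometry of the $D_{m}$. The plan is to cut $u$ off away from the parabolic boundary $(\partial D\times[0,T])\cup(D\times\{0\})$ of $Q$ and then run, on the localized function, the classical machinery for uniformly parabolic equations of divergence form in the natural order: first the Caccioppoli/energy estimate, which will give \eqref{eq:LinftyL2} together with a companion interior bound for $\nabla u$; then the De Giorgi--Nash--Moser iteration, which will give \eqref{eq:Linfty}; and finally a mollification-in-$t$ argument exploiting $\partial_{t}(a_{ij})=0$, which will give \eqref{eq:dudtL2uL2}. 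Because every cutoff lives on the scale $\varepsilon$, and the data norms are, where necessary, localized to parabolic cylinders of radius comparable to $\varepsilon$ before Hölder's inequality is applied, the constants will depend only on $n,\lambda,\Lambda,p,\varepsilon$.

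For \eqref{eq:LinftyL2} I would fix $\eta\in C_{c}^{\infty}(D)$ with $\eta\equiv1$ on $D_{\varepsilon}$, $\supp\eta\subset D_{\varepsilon/2}$, $|\nabla\eta|\le C/\varepsilon$, and $\zeta\in C^{\infty}([0,T])$ with $\zeta\equiv0$ on $[0,\varepsilon^{2}/2]$, $\zeta\equiv1$ on $[\varepsilon^{2},T]$, $|\zeta'|\le C/\varepsilon^{2}$, and insert $\varphi=u\,\eta^{2}\zeta^{2}$ into \eqref{eq:weaksolution} (legitimate after a Steklov average in $t$). The usual integration by parts then gives, for every $t'\in(0,T]$,
\[
\int_{D}(u\,\eta\zeta)^{2}(\cdot,t')\,dx+\iint_{Q}|\nabla u|^{2}\eta^{2}\zeta^{2}
\le C\iint_{Q}u^{2}\bigl(|\nabla\eta|^{2}\zeta^{2}+\eta^{2}|\zeta\zeta'|\bigr)
+C\iint_{Q}\Bigl(|f|\,|u|+\sum_{i}|f_{i}|\,\bigl|\partial_{i}(u\,\eta^{2}\zeta^{2})\bigr|\Bigr),
\]
the first right-hand term being $\le C\varepsilon^{-2}\lVert u\rVert_{L^{2}(Q)}^{2}$. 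The source terms I would treat by Hölder's inequality together with the parabolic Sobolev embedding $V_{2}^{1,0}(Q)\hookrightarrow L^{2(n+2)/n}(Q)$ applied to $u\,\eta\zeta$ (which vanishes near the parabolic boundary): the hypotheses $p>n+2$ and $\kappa=p(n+2)/(n+2+p)$ enter here precisely because they sit at the subcriticality thresholds $\tfrac{n+2}{2p}<\tfrac12$ and $\tfrac{n+2}{2\kappa}<1$, so that $\iint|f|\,|u|\,\eta^{2}\zeta^{2}$ and $\iint|f_{i}|\,|\partial_{i}(u\,\eta^{2}\zeta^{2})|$ can be absorbed into a small multiple of the left-hand side plus $C(\lVert u\rVert_{L^{2}(Q)}^{2}+F_{0}^{2})$. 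Since $\eta\zeta\equiv1$ on $\widehat{Q}_{\varepsilon}$, this proves \eqref{eq:LinftyL2} and, at the same time, the companion bound $\lVert\nabla u\rVert_{L^{2}(\widehat{Q}_{\varepsilon})}\le C(\lVert u\rVert_{L^{2}(Q)}+F_{0})$ that I shall use below. Running the De Giorgi level-set (or Moser) iteration on $u\,\eta^{2}\zeta^{2}$ --- the very same conditions on $p$ and $\kappa$ being exactly what that iteration requires --- would then produce the local boundedness estimate \eqref{eq:Linfty}.

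The estimate \eqref{eq:dudtL2uL2} is where $\partial_{t}(a_{ij})=0$ is used for the first time. I would pick a space--time cutoff $\chi$ with $\chi\equiv1$ on $\widehat{Q}_{\varepsilon}$, $\supp\chi\subset D_{\varepsilon/2}\times(\varepsilon^{2}/2,T]$, $\chi(\cdot,0)=0$, and test \eqref{eq:parabolic} against $(\partial_{t}u)\chi^{2}$ over $D\times(0,t']$. Using the symmetry and $t$-independence of $(a_{ij})$ in the form $\sum_{i,j}a_{ij}\,\partial_{j}u\,\partial_{i}\partial_{t}u=\tfrac12\partial_{t}\!\bigl(\sum_{i,j}a_{ij}\,\partial_{i}u\,\partial_{j}u\bigr)$, and integrating that term by parts in $t$, one is led to
\[
\iint(\partial_{t}u)^{2}\chi^{2}+\tfrac12\int\sum_{i,j}a_{ij}\,\partial_{i}u\,\partial_{j}u\,\chi^{2}\,\Big|_{t=t'}
=\tfrac12\iint\sum_{i,j}a_{ij}\,\partial_{i}u\,\partial_{j}u\,\partial_{t}(\chi^{2})
+\iint f(\partial_{t}u)\chi^{2}+\iint\sum_{i}f_{i}\,\partial_{i}\!\bigl((\partial_{t}u)\chi^{2}\bigr).
\]
The second term on the left is nonnegative. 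On the right, the first term is $\le C\varepsilon^{-2}\iint_{\supp\chi}|\nabla u|^{2}$; the $f$-term I would split as $\le\tfrac14\iint(\partial_{t}u)^{2}\chi^{2}+C\iint f^{2}\chi^{2}$ --- this is exactly where $\lVert f\rVert_{L^{\max\{2,\kappa\}}(Q)}$, rather than any time derivative of $f$, enters; and the $f_{i}$-term I would write as $\iint f_{i}\,(\partial_{i}\partial_{t}u)\chi^{2}+\iint f_{i}\,(\partial_{t}u)\,\partial_{i}(\chi^{2})$, treating the second piece like the $f$-term and integrating the first by parts in $t$ to produce $-\iint(\partial_{t}f_{i})\,\partial_{i}u\,\chi^{2}-\iint f_{i}\,\partial_{i}u\,\partial_{t}(\chi^{2})+\int f_{i}\,\partial_{i}u\,\chi^{2}\big|_{t=t'}$, controlled by $\lVert\partial_{t}f_{i}\rVert_{L^{2}(Q)}$, by $\iint_{\supp\chi}|\nabla u|^{2}$, and by $\sup_{t'}\lVert f_{i}(\cdot,t')\rVert_{L^{2}(\supp\chi)}^{2}$ (half of the boundary term $\tfrac{\lambda}{2}\int|\nabla u|^{2}\chi^{2}\big|_{t=t'}$ being absorbed into the nonnegative left-hand side). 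Inserting \eqref{eq:LinftyL2}, the companion bound for $\nabla u$, and the elementary estimate $\sup_{t'}\lVert f_{i}(\cdot,t')\rVert_{L^{2}}^{2}\le C(\varepsilon)(\lVert f_{i}\rVert_{L^{p}(Q)}^{2}+\lVert\partial_{t}f_{i}\rVert_{L^{2}(Q)}^{2})$ would then give \eqref{eq:dudtL2uL2}.

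I expect the main obstacle to be \eqref{eq:dudtL2uL2}, for two reasons. First, the formal test function $(\partial_{t}u)\chi^{2}$ is not admissible, so the identity above must be obtained by a mollification (Steklov-averaging) argument in $t$; this is precisely where $\partial_{t}(a_{ij})=0$ is indispensable, as it lets the mollification commute with $\sum_{i}\partial_{i}(a_{ij}\partial_{j}\cdot)$. Second, and more delicate, the computation must be organized so that the data enter only through $\lVert f\rVert_{L^{\max\{2,\kappa\}}(Q)}$ and $\lVert\partial_{t}f_{i}\rVert_{L^{2}(Q)}$ --- never through a time derivative of $f$ itself; the two integrations by parts in $t$ above are exactly what achieves this, the first moving the $t$-derivative off the problematic term $\sum_{i,j}a_{ij}\partial_{j}u\,\partial_{i}\partial_{t}u$ and onto $\nabla u$ (already controlled by the energy estimate), the second moving it off $\partial_{i}\partial_{t}u$ and onto $f_{i}$, where a time derivative is available in $L^{2}$. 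Everything else reduces to the standard interior theory of linear uniformly parabolic equations in divergence form, with the cutoff scales bookkept so that the final constants depend only on $n,\lambda,\Lambda,p,\varepsilon$.
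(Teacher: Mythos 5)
Your proposal follows essentially the same route as the paper's: a localized energy (Caccioppoli) estimate with cut-off, De Giorgi iteration for $L^{\infty}$, and then testing with $(\partial_{t}u)\chi^{2}$ together with $\partial_{t}a_{ij}=0$ and the two integrations by parts in $t$ to reach \eqref{eq:dudtL2uL2}. The one structural difference is that you obtain the auxiliary bound $\lVert\nabla u\rVert_{L^{2}(\widehat{Q}_{\varepsilon})}\le C(\lVert u\rVert_{L^{2}(Q)}+F_{0})$ directly from the energy estimate, whereas the paper (following \cite{LRU}) derives it from \eqref{eq:Linfty}: Lemma~\ref{lemma:DuL2} first bounds $\lVert\nabla u\rVert_{L^{2}(Q_{\rho})}$ by $\osc_{Q_{2\rho}}u$ plus data (testing with $(u-u_{0})\zeta^{2}$), and $\osc u$ is then controlled by $\lVert u\rVert_{L^{\infty}}$. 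Your shortcut is valid and a little more economical. Also, for the boundary-in-$t$ term $\int f_{i}\,\partial_{i}u\,\chi^{2}\big|_{t=t'}$, the paper writes $\int(f_{i}\zeta)^{2}(\cdot,t_{0})\,dx=\int\!\!\int\partial_{t}\bigl((f_{i}\zeta)^{2}\bigr)\,dt\,dx$ exploiting $\zeta(\cdot,t_{0}-\rho^{2})=0$, which keeps everything on the local cylinder; your $\sup_{t'}\lVert f_{i}(\cdot,t')\rVert_{L^{2}}$ route is fine too, though it hides a dependence on $T$ and $|D|$ that the paper's version avoids at the local level.

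One small slip: the identity you obtain by testing with $(\partial_{t}u)\chi^{2}$ is missing the cross term
\begin{equation*}
\iint\sum_{i,j}a_{ij}\,\partial_{j}u\,(\partial_{t}u)\,\partial_{i}(\chi^{2})\,dx\,dt,
\end{equation*}
which arises from the spatial integration by parts $\iint\sum_{i,j}a_{ij}\partial_{j}u\,\partial_{i}\bigl((\partial_{t}u)\chi^{2}\bigr)$ before you separate off the $\partial_{i}\partial_{t}u$ piece. It is present in the paper's Lemma~\ref{lemma:dudtL2} (the term $2\sum_{i,j}a_{ij}\,\partial_{j}u\,\frac{\partial u}{\partial t}\,\zeta\,\frac{\partial\zeta}{\partial x_{i}}$) and is absorbed in the standard way, $\le\tfrac14\iint(\partial_{t}u)^{2}\chi^{2}+C\iint|\nabla u|^{2}|\nabla\chi|^{2}$, so it costs nothing, but you should include it.
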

Now we prove our main theorem by applying
Lemma~\ref{lemma:keyestimate}.
This proof is inspired by \cite{LRU}.
\begin{proof}[of Theorem~\ref{theorem:main}]
Before going into the proof, we remark that a general constant $C$ which we used below in our estimates depends only on $n, \lambda , \Lambda , p$
and $\varepsilon_{j}$ ($j = 1, 2, 3$). To begin with the proof, let $0 < \varepsilon_{1} < \varepsilon_{2} < \varepsilon_{3}$.
Then we have
\begin{equation}\label{eq:proof11}
\sup_{\varepsilon_{2}^{2} < t \leq T}
\lVert u ( \cdot , t ) \rVert_{L^{2} ( D_{\varepsilon_{2}} )}
\leq C \left(
 \lVert u \rVert_{L^{2} (Q)} + F_{0}
\right) 
\end{equation}
and
\begin{equation}\label{eq:proof12}
\left\lVert
 \frac{\partial u}{\partial t}
\right\rVert_{L^{2} ( \widehat{Q}_{\varepsilon_{1}} )}
\leq C \left(
 \lVert u \rVert_{L^{2} (Q)} + F_{1}
\right) 
\end{equation}
by (\ref{eq:LinftyL2}) and (\ref{eq:dudtL2uL2})
in Lemma~\ref{lemma:keyestimate},
where $F_{0}$, $F_{1}$ are defined by 
(\ref{eq:defOfF0}) and (\ref{eq:defOfF1}).
On the other hand,
$u_{t} = \partial u / \partial t$
satisfies the equation
\[
\frac{\partial u_{t}}{\partial t}
- \sum_{i,j=1}^{n} \frac{\partial}{\partial x_{i}} \left(
 a_{i j} (x) \frac{\partial u_{t}}{\partial x_{j}}
\right) = \frac{\partial f}{\partial t}
- \sum_{i=1}^{n} \frac{\partial}{\partial x_{i}} \left(
 \frac{\partial f_{i}}{\partial t}
\right)
\]
by applying $\partial / \partial t$ to (\ref{eq:parabolic})
(also see Remark~\ref{remark:steklov}).
Hence we have
\begin{equation}\label{eq:proof2}
\lVert u_{t} \rVert_{L^{\infty} ( \widehat{Q}_{\varepsilon_{2}} )}
\leq C \left(
 \lVert u_{t} \rVert_{L^{2} ( \widehat{Q}_{\varepsilon_{1}} )}
 + F_{0}^{\prime}
\right)
\end{equation}
by Lemma~\ref{lemma:keyestimate} (\ref{eq:Linfty}),
where we define
\[
F_{0}^{\prime} :=
\left\lVert
 \frac{\partial f}{\partial t}
\right\rVert_{L^{\frac{p(n+2)}{n+2+p}} (Q)}
+ \sum_{i=1}^{n}
\left\lVert
 \frac{\partial f_{i}}{\partial t}
\right\rVert_{L^{p} (Q)} .
\]
In particular,
$u_t( \cdot , t ) \in L^{\infty} ( D_{\varepsilon_{2}} )$
holds for a.e.\ $t \in ( \varepsilon_{2}^{2} , T ]$.
Now we regard the equation (\ref{eq:parabolic}) as
the elliptic equation
\begin{equation}\label{eq:aselliptic}
\sum_{i,j=1}^{n} \frac{\partial}{\partial x_{i}} \left(
 a_{i j} (x) \frac{\partial u}{\partial x_{j}}
\right)
= \frac{\partial u}{\partial t}
- f + \sum_{i=1}^{n} \frac{\partial f_{i}}{\partial x_{i}}
\end{equation}
by fixing $t \in ( \varepsilon_{2}^{2} , T ]$.
We remark that
$\partial u / \partial t - f \in L^{\infty} ( D_{\varepsilon_{2}} )$.
Then, for any $\alpha^{\prime}$
with the condition (\ref{eq:alphaprime}),
we have the estimate
\begin{align}
& \sum_{m=1}^{L}
\lVert u ( \cdot , t ) \rVert_{
 C^{1, \alpha^{\prime}}
 ( \overline{D_{m}} \cap D_{\varepsilon_{3}} )
} \notag \\
& \leq C_{\sharp} \Biggl(
 \lVert u ( \cdot , t ) \rVert_{L^{2} ( D_{\varepsilon_{2}} )}
 + \left\lVert \frac{\partial u}{\partial t} ( \cdot , t )
 \right\rVert_{
  L^{\infty} ( D_{\varepsilon_{2}} )
 }
 + \lVert f ( \cdot , t ) \rVert_{L^{\infty} ( D_{\varepsilon_{2}} )}
 \notag \\ 
 & \hspace*{35ex} \mbox{}
 + \sum_{m=1}^{L} \sum_{i=1}^{n}
 \lVert f_{i} ( \cdot , t ) \rVert_{
  C^{\alpha^{\prime}} ( \overline{D_{m}})
 }
\Biggr) \label{eq:x}
\end{align}
by Theorem~\ref{theorem:LN}, where
$C_{\sharp} > 0$ depends only on
\begin{math}
n, L, \mu, \alpha , \varepsilon , \lambda , \Lambda ,
\lVert a_{i j} \rVert_{C^{\alpha^{\prime}} ( \overline{D_{m}})}
\end{math}
and the $C^{1, \alpha^{\prime}}$ norms of $D_{m}$.
Taking the supremum of the inequality 
(\ref{eq:x}) over $( \varepsilon_{2}^{2} , T ]$
with respect to $t$, 
and using (\ref{eq:proof11}), (\ref{eq:proof12}) 
and (\ref{eq:proof2}),
we have
\begin{align*}
& \sum_{m=1}^{L}
\sup_{\varepsilon_{2}^{2} < t \leq T}
\lVert u ( \cdot , t ) \rVert_{
 C^{1, \alpha^{\prime}}
 ( \overline{D_{m}} \cap D_{\varepsilon_{3}} )
} \\
& \leq C_{\sharp} \Biggl(
 \sup_{\varepsilon_{2}^{2} < t \leq T}
 \lVert u ( \cdot , t ) \rVert_{L^{2} ( D_{\varepsilon_{2}} )}
 + \left\lVert \frac{\partial u}{\partial t}
 \right\rVert_{
  L^{\infty} ( \widehat{Q}_{\varepsilon_{2}} )
 }
 + \lVert f \rVert_{L^{\infty} ( \widehat{Q}_{\varepsilon_{2}} )} \\
 & \hspace*{30ex} \mbox{}
 + \sum_{m=1}^{L} \sum_{i=1}^{n}
 \sup_{\varepsilon_{2}^{2} < t \leq T}
 \lVert f_{i} ( \cdot , t ) \rVert_{
  C^{\alpha^{\prime}} ( \overline{D_{m}})
 }
\Biggr) \displaybreak[1] \\
& \leq C_{\sharp} C \Biggl(
 \lVert u \rVert_{L^{2} (Q)}
 + F_{0} + F_{1} + F_{0}^{\prime}
 + \lVert f \rVert_{L^{\infty} ( \widehat{Q}_{\varepsilon_{2}} )} \\
 & \hspace*{30ex} \mbox{}
 + \sum_{m=1}^{L} \sum_{i=1}^{n}
 \sup_{\varepsilon_{2}^{2} < t \leq T}
 \lVert f_{i} ( \cdot , t ) \rVert_{
  C^{\alpha^{\prime}} ( \overline{D_{m}})
 }
\Biggr) ,
\end{align*}
which is the estimate we want to obtain.
\end{proof}
\begin{remark}\label{remark:steklov}
Since we assume that $u$ belongs only in $V_{2}^{1,0} (Q)$
with respect to the regularity of a weak solution, 
one may think that we cannot apply $\partial / \partial t$ directly. 
However, it is enough to consider the Steklov mean function 
and to make $h$ tend to $0$, 
where we define the Steklov mean function 
$v_{h}$ of $v$ by 
\[
v_{h} (x,t) = \frac{1}{h} \int_{t}^{t + h} v( x, \tau ) \, d \tau. 
\]
Hereafter we omit the detail with respect to this remark 
although we often apply this argument. Also see 
\cite[III \S2 p.\,141]{LSU}
and (62) in \cite[p.\,152]{LRU}, for example. 
\end{remark}
\section{Some estimates.}\label{section:someestimates}
In this section, we prove Lemma~\ref{lemma:keyestimate}.
The estimates (\ref{eq:LinftyL2}) and (\ref{eq:Linfty}) are
well-known, but we give these proofs in Appendix
for readers' convenience.
In order to show the estimate (\ref{eq:dudtL2uL2}), we prepare some necessary lemmas
for its proof.

Throughout this section, $C > 0$ denotes a general constant
depending only on $n, \lambda , \Lambda$. Also, we assume that
the coefficient $( a_{i j} )$ is a matrix-value function
defined on $D$, symmetric, positive definite,
and satisfies the condition (\ref{eq:aijxiixij}).
Moreover, we set
$Q_{r} := B_{r} ( x_{0} ) \times ( t_{0} - r^{2} , \, t_{0} ]$,
and assume that
$Q_{2 \rho} \subset D \times (0,T]$
with $0 < \rho \leq 1$.

\medskip
The following two lemmas are essentially shown in
\cite{LRU}.
We give their proofs here for the sake of completeness.

\begin{lemma}[{\cite[Lemma~3]{LRU}}]\label{lemma:DuL2}
Let $1 < r < \infty$ and $1/r + 1 / r^{\prime} = 1$.
Then a solution $u$ to {\rm (\ref{eq:parabolic})}
satisfies the estimate
\begin{equation}\label{eq:DuL2}
\lVert \nabla u \rVert_{L^{2} ( Q_{\rho} )}
\leq C \left[
 ( \rho^{n/2} + \rho^{(n+2) / r^{\prime}} ) \osc_{Q_{2 \rho}} u
 + \lVert f \rVert_{L^{r} ( Q_{2 \rho} )}
 + \sum_{i=1}^{n} \lVert f_{i} \rVert_{L^{2} ( Q_{2 \rho} )}
\right] .
\end{equation}
\end{lemma}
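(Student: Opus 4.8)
The plan is to prove a parabolic Caccioppoli (energy) inequality on the nested cylinders $Q_\rho\subset Q_{2\rho}$. Since the equation \eqref{eq:parabolic} is invariant under adding a constant to $u$, I would first replace $u$ by $u-k$ with $k:=\tfrac12\bigl(\esssup_{Q_{2\rho}}u+\essinf_{Q_{2\rho}}u\bigr)$, so that $|u-k|\le\tfrac12\osc_{Q_{2\rho}}u$ throughout $Q_{2\rho}$; this changes neither side of \eqref{eq:DuL2}. Then fix a cutoff $\zeta=\zeta(x,t)\in C^\infty$ with $0\le\zeta\le1$, $\zeta\equiv1$ on $Q_\rho$, $\supp\zeta\subset B_{2\rho}(x_0)\times(t_0-4\rho^2,t_0]$, with $\zeta$ vanishing in a neighbourhood of the lateral boundary $\partial B_{2\rho}(x_0)$ and of the initial slice $t=t_0-4\rho^2$ (but not near $t=t_0$), and $|\nabla\zeta|\le C/\rho$, $|\partial_t\zeta|\le C/\rho^2$.

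The key step is to use $\varphi=\zeta^2(u-k)$ as a test function in the weak formulation \eqref{eq:weaksolution} with $t^{\prime}=t_0$; because $u$ lies only in $V_2^{1,0}(Q)$ this must be done first for the Steklov means of $u$ and then passed to the limit, exactly as in Remark~\ref{remark:steklov} (note $\varphi$ is admissible since $\supp\zeta\subset B_{2\rho}(x_0)\times(t_0-4\rho^2,t_0]\subset Q$ and $\varphi(\cdot,0)=0$). The $\partial_t$ term produces $\tfrac12\int_{B_{2\rho}}\zeta^2(u-k)^2\big|_{t=t_0}-\iint\zeta\,\partial_t\zeta\,(u-k)^2$, where the contribution of the slice $t=t_0-4\rho^2$ vanishes because $\zeta=0$ there; the principal part gives $\iint\zeta^2\sum a_{ij}\partial_j u\,\partial_i u+2\iint\zeta\sum a_{ij}\partial_j u\,\partial_i\zeta\,(u-k)$; and the right-hand side of \eqref{eq:parabolic} contributes $\iint f\zeta^2(u-k)+\iint\sum_i f_i\bigl(\zeta^2\partial_i u+2\zeta\,\partial_i\zeta\,(u-k)\bigr)$.

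Next, discarding the nonnegative top-slice term and using the ellipticity \eqref{eq:aijxiixij} to bound the left side below by $\lambda\lVert\nabla u\rVert_{L^2(Q_\rho)}^2$ (recall $\zeta\equiv1$ on $Q_\rho$), I would absorb the $a_{ij}$-cross term and the term $\sum_i f_i\zeta^2\partial_i u$ into $\tfrac\lambda2\iint\zeta^2|\nabla u|^2$ by Young's inequality; together with the elementary bounds $|\nabla\zeta|\le C/\rho$ and $|\partial_t\zeta|\le C/\rho^2$ (so that the $\partial_t\zeta$-term, the $|\nabla\zeta|^2(u-k)^2$-term and the remaining $f_i$-term all fold into $C\rho^{-2}\lVert u-k\rVert_{L^2(Q_{2\rho})}^2+C\sum_i\lVert f_i\rVert_{L^2(Q_{2\rho})}^2$) this yields
\[
\lVert\nabla u\rVert_{L^2(Q_\rho)}^2\le C\Bigl(\rho^{-2}\lVert u-k\rVert_{L^2(Q_{2\rho})}^2+\sum_{i=1}^n\lVert f_i\rVert_{L^2(Q_{2\rho})}^2+\iint_{Q_{2\rho}}|f|\,\zeta^2\,|u-k|\Bigr).
\]

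To conclude I would insert $|u-k|\le\tfrac12\osc_{Q_{2\rho}}u$ and $|Q_{2\rho}|\le C\rho^{n+2}$: the first term becomes $\le C\rho^n(\osc_{Q_{2\rho}}u)^2$, while H\"older's inequality with exponents $r,r^{\prime}$ gives $\iint_{Q_{2\rho}}|f|\,\zeta^2\,|u-k|\le\lVert f\rVert_{L^r(Q_{2\rho})}\,|Q_{2\rho}|^{1/r^{\prime}}\,\osc_{Q_{2\rho}}u\le C\lVert f\rVert_{L^r(Q_{2\rho})}\,\rho^{(n+2)/r^{\prime}}\,\osc_{Q_{2\rho}}u$. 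Taking square roots, using $\sqrt{a+b+c}\le\sqrt a+\sqrt b+\sqrt c$, and bounding the geometric mean by the arithmetic mean via $\bigl(\lVert f\rVert_{L^r(Q_{2\rho})}\rho^{(n+2)/r^{\prime}}\osc_{Q_{2\rho}}u\bigr)^{1/2}\le\tfrac12\bigl(\lVert f\rVert_{L^r(Q_{2\rho})}+\rho^{(n+2)/r^{\prime}}\osc_{Q_{2\rho}}u\bigr)$, yields \eqref{eq:DuL2}. I expect the only genuinely delicate points to be the rigorous treatment of the time derivative for $u\in V_2^{1,0}(Q)$, disposed of by the Steklov-mean device of Remark~\ref{remark:steklov}, and the bookkeeping of the powers of $\rho$; the energy estimate itself is entirely standard.
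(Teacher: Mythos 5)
Your proof is correct and follows essentially the same route as the paper's: both test the weak formulation with $(u-c)\zeta^2$ for a suitable cutoff $\zeta$ and constant $c$ (the paper uses the mean $u_0$ of $u$ over $Q_{2\rho}$, you use the midpoint of $\esssup$ and $\essinf$; both give $\lvert u-c\rvert\le\osc_{Q_{2\rho}}u$), absorb the gradient cross-terms by Young's inequality, and handle the $f$-contribution by H\"older with exponents $r,r'$. Your treatment of the $f$-term (H\"older, then an arithmetic--geometric-mean split after taking square roots) is an equivalent rearrangement of the paper's direct application of Young's inequality at the squared level to $\lVert f\zeta\rVert_{L^{r}}\lVert(u-u_0)\zeta\rVert_{L^{r'}}$.
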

\begin{proof}
Let $\zeta$ be a smooth cut-off function on $Q_{2 \rho}$
satisfying $\zeta \equiv 1$ on $Q_{\rho}$,
$\zeta \equiv 0$ on
$Q_{2 \rho} \setminus Q_{3 \rho / 2}$,
$0 \leq \zeta \leq 1$ on $Q_{2 \rho}$,
and
\begin{math}
\lvert \partial \zeta / \partial t \rvert
+ \lvert \nabla \zeta \rvert^{2}
\leq C \rho^{-2}
\end{math}
on $Q_{2 \rho}$.
Let $u_{0}$ be the average value of $u$ in $Q_{2 \rho}$:
\[
u_{0} := \frac{1}{\lvert Q_{2 \rho} \rvert}
\iint_{Q_{2 \rho}} u(x,t) \, d x \, d t,
\]
where $\lvert Q_{2 \rho} \rvert$ denotes the measure of $ Q_{2 \rho}$.
Testing (\ref{eq:parabolic})
by $( u - u_{0} ) \zeta^{2}$ 
and integrating by parts
(i.e.\ taking $\varphi = ( u - u_{0} ) \zeta^{2}$
for (\ref{eq:weaksolution}). 
Also see Remark~\ref{remark:steklov}), we have
\begin{align*}
& \frac{1}{2} \int_{B_{2 \rho} ( x_{0} )}
 \bigl( ( u - u_{0} )^{2} \zeta^{2} \bigr) ( x, t_{0} ) \,
d x
- \iint_{Q_{2 \rho}}
 ( u - u_{0} )^{2} \zeta \frac{\partial \zeta}{\partial t}
d x \, d t \\
& \mbox{}
+ \iint_{Q_{2 \rho}}
 \sum_{i,j=1}^{n} a_{i j} \frac{\partial u}{\partial x_{j}}
 \frac{\partial u}{\partial x_{i}} \zeta^{2} \,
d x \, d t
+ 2 \iint_{Q_{2 \rho}}
 \sum_{i,j=1}^{n} a_{i j} \frac{\partial u}{\partial x_{j}}
 ( u - u_{0} ) \zeta \frac{\partial \zeta}{\partial x_{i}}
d x \, d t \\
& = \iint_{Q_{2 \rho}} f ( u - u_{0} ) \zeta^{2} \, d x \, d t
+ \sum_{i=1}^{n} \iint_{Q_{2 \rho}}
 \left[
  f_{i} \frac{\partial u}{\partial x_{i}} \zeta^{2}
  + 2 f_{i} ( u - u_{0} ) \zeta \frac{\partial \zeta}{\partial x_{i}}
 \right]
d x \, d t.
\end{align*}
Hence we have
\begin{align*}
& \frac{1}{2} \int_{B_{2 \rho} ( x_{0} )}
 \bigl( ( u - u_{0} )^{2} \zeta^{2} \bigr) ( x, t_{0} ) \,
d x + \lambda \iint_{Q_{2 \rho}}
 \lvert \nabla u \rvert^{2} \zeta^{2} \,
d x \, d t \\
& \leq \frac{1}{2} \int_{B_{2 \rho} ( x_{0} )}
 \bigl( ( u - u_{0} )^{2} \zeta^{2} \bigr) ( x, t_{0} ) \,
d x + \iint_{Q_{2 \rho}}
 \sum_{i,j=1}^{n} a_{i j} \frac{\partial u}{\partial x_{j}}
 \frac{\partial u}{\partial x_{i}} \zeta^{2} \,
d x \, d t \displaybreak[1] \\
& = \iint_{Q_{2 \rho}}
 ( u - u_{0} )^{2} \zeta \frac{\partial \zeta}{\partial t}
d x \, d t
- 2 \iint_{Q_{2 \rho}}
 \sum_{i,j=1}^{n} a_{i j} \frac{\partial u}{\partial x_{j}}
 ( u - u_{0} ) \zeta \frac{\partial \zeta}{\partial x_{i}}
d x \, d t \\
& \hspace*{3ex} \mbox{}
+ \iint_{Q_{2 \rho}} f ( u - u_{0} ) \zeta^{2} \, d x \, d t \\
& \hspace*{3ex} \mbox{}
+ \sum_{i=1}^{n} \iint_{Q_{2 \rho}}
 \left[
  f_{i} \frac{\partial u}{\partial x_{i}} \zeta^{2}
  + 2 f_{i} ( u - u_{0} ) \zeta \frac{\partial \zeta}{\partial x_{i}}
 \right]
d x \, d t \displaybreak[1] \\
& \leq \iint_{Q_{2 \rho}}
 ( u - u_{0} )^{2} \zeta
 \left\lvert \frac{\partial \zeta}{\partial t} \right\rvert
d x \, d t
+ \varepsilon_{1} \iint_{Q_{2 \rho}}
 \lvert \nabla u \rvert^{2} \zeta^{2} \,
d x \, d t \\
& \hspace*{3ex} \mbox{}
+ \frac{C}{\varepsilon_{1}} \iint_{Q_{2 \rho}}
 \lvert u - u_{0} \rvert^{2} \lvert \nabla \zeta \rvert^{2} \,
d x \, d t 
+ \frac{1}{2} \left(
 \iint_{Q_{2 \rho}}
  \lvert f \zeta \rvert^{r} \,
 d x \, d t
\right)^{2/r} \\
& \hspace*{3ex} \mbox{}
+ \frac{1}{2} \left(
 \iint_{Q_{2 \rho}}
  \lvert ( u - u_{0} ) \zeta \rvert^{r^{\prime}} \,
d x \, d t
\right)^{2 / r^{\prime}} 
+ \varepsilon_{1} \iint_{Q_{2 \rho}}
 \lvert \nabla u \rvert^{2} \zeta^{2} \,
d x \, d t \\
& \hspace*{3ex} \mbox{}
+ \left( \frac{1}{\varepsilon_{1}} + 1 \right)
\iint_{Q_{2 \rho}}
 \sum_{i=1}^{n} \lvert f_{i} \rvert^{2} \zeta^{2} \,
d x \, d t
+ \iint_{Q_{2 \rho}}
 \lvert u - u_{0} \rvert^{2}
 \lvert \nabla \zeta \rvert^{2} \,
d x \, d t .
\end{align*}
We now take $\varepsilon_{1} > 0$ small enough.
Then, we have
\begin{align*}
& \iint_{Q_{\rho}} \lvert \nabla u \rvert^{2} \, d x \, d t
\leq \iint_{Q_{2 \rho}}
 \lvert \nabla u \rvert^{2} \zeta^{2} \,
d x \, d t \displaybreak[1] \\
& \leq C \iint_{Q_{2 \rho}}
 ( u - u_{0} )^{2} \left[
  \zeta \left\lvert \frac{\partial \zeta}{\partial t} \right\rvert
  + \lvert \nabla \zeta \rvert^{2}
 \right]
d x \, d t \\
& \hspace*{3ex} \mbox{}
+ C \left(
 \iint_{Q_{2 \rho}}
  \lvert ( u - u_{0} ) \zeta \rvert^{r^{\prime}} \,
 d x \, d t
\right)^{2 / r^{\prime}} \\
& \hspace*{3ex} \mbox{}
+ C \left(
 \iint_{Q_{2 \rho}} \lvert f \zeta \rvert^{r} \, d x \, d t
\right)^{2/r}
+ C \iint_{Q_{2 \rho}}
 \sum_{i=1}^{n} \lvert f_{i} \rvert^{2} \zeta^{2} \,
d x \, d t \displaybreak[1] \\
& \leq C \left[
 \left( \rho^{n} + \rho^{2(n+2) / r^{\prime}} \right)
 \left( \osc_{Q_{2 \rho}} u \right)^{2}
 + \lVert f \rVert_{L^{r} ( Q_{2 \rho} )}^{2}
 + \sum_{i=1}^{n} \lVert f_{i} \rVert_{L^{2} ( Q_{2 \rho} )}^{2}
\right],
\end{align*}
because
$\lvert u(x,t) - u_{0} \rvert \leq \osc_{Q_{2 \rho}} u$
holds for any $(x,t) \in Q_{2 \rho}$. This completes the proof.
\end{proof}
\begin{lemma}[{\cite[Lemma~5]{LRU}}]\label{lemma:dudtL2}
A solution $u$ to {\rm (\ref{eq:parabolic})}
satisfies the estimate
\begin{align}
\left\lVert
 \frac{\partial u}{\partial t}
\right\rVert_{L^{2} ( Q_{\rho} )}
& \leq C \Biggl[
 \rho^{-1} \lVert \nabla u \rVert_{L^{2} ( Q_{2 \rho} )}
 + \lVert f \rVert_{L^{2} ( Q_{2 \rho} )} \notag \\
 & \hspace*{10ex}
 + \sum_{i=1}^{n}
 \left(
  \rho^{-1}
  \lVert f_{i} \rVert_{L^{2} ( Q_{2 \rho} )}
  + \left\lVert
   \frac{\partial f_{i}}{\partial t}
  \right\rVert_{L^{2} ( Q_{2 \rho} )}
 \right)
\Biggr] \label{eq:dudtL2}
\end{align}
\end{lemma}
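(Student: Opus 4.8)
The plan is to prove \eqref{eq:dudtL2} by testing the equation with the time derivative of the solution. Since $u$ is only assumed to lie in $V_{2}^{1,0}(Q)$, every manipulation below involving $\partial u/\partial t$ or $\partial(\nabla u)/\partial t$ must be read in the sense of Remark~\ref{remark:steklov}: one first replaces $u$, $f$, $f_{i}$ by Steklov means $u_{h}$, $f_{h}$, $(f_{i})_{h}$, for which these derivatives are genuine $L^{2}$ functions; because $(a_{ij})$ is independent of $t$ the averaging commutes with the spatial operator, so $u_{h}$ solves an equation of the same form; one then derives the estimate with constants independent of $h$ and lets $h\to0$. I would use the same cut-off $\zeta$ as in the proof of Lemma~\ref{lemma:DuL2}, i.e.\ $\zeta\equiv1$ on $Q_{\rho}$, $\zeta\equiv0$ on $Q_{2\rho}\setminus Q_{3\rho/2}$, $0\le\zeta\le1$, and $\lvert\partial\zeta/\partial t\rvert+\lvert\nabla\zeta\rvert^{2}\le C\rho^{-2}$.

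The core computation is to take $\varphi=(\partial u/\partial t)\zeta^{2}$ in \eqref{eq:weaksolution} (with $u_{h}$ in place of $u$). On the left this produces $\iint_{Q_{2\rho}}(\partial u/\partial t)^{2}\zeta^{2}$, the term $\iint_{Q_{2\rho}}\sum a_{ij}(\partial u/\partial x_{j})(\partial^{2}u/\partial t\,\partial x_{i})\zeta^{2}$, and a term carrying $\zeta\nabla\zeta$. For the second one I would use the symmetry and the time-independence of $(a_{ij})$ to write $\sum a_{ij}(\partial u/\partial x_{j})\partial_{t}(\partial u/\partial x_{i})=\tfrac12\partial_{t}\bigl(\sum a_{ij}(\partial u/\partial x_{i})(\partial u/\partial x_{j})\bigr)$ and integrate by parts in $t$: the contribution at the bottom of the cylinder vanishes because $\zeta$ does, the contribution at the top slice $\{t=t_{0}\}$ equals $\tfrac12\int_{B_{2\rho}(x_{0})}\sum a_{ij}(\partial u/\partial x_{i})(\partial u/\partial x_{j})\zeta^{2}\ge\tfrac{\lambda}{2}\int_{B_{2\rho}(x_{0})}\lvert\nabla u\rvert^{2}\zeta^{2}$ and is kept on the left, and what remains is $\le C\rho^{-2}\iint_{Q_{2\rho}}\lvert\nabla u\rvert^{2}$. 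The terms $\iint f(\partial u/\partial t)\zeta^{2}$, $\iint\sum a_{ij}(\partial u/\partial x_{j})(\partial u/\partial t)\zeta(\partial\zeta/\partial x_{i})$ and $\iint\sum f_{i}(\partial u/\partial t)\zeta(\partial\zeta/\partial x_{i})$ are all handled by Young's inequality, each giving a small multiple of $\iint(\partial u/\partial t)^{2}\zeta^{2}$ (to be absorbed on the left) plus, respectively, $C\lVert f\rVert_{L^{2}(Q_{2\rho})}^{2}$, $C\rho^{-2}\lVert\nabla u\rVert_{L^{2}(Q_{2\rho})}^{2}$, and $C\rho^{-2}\sum\lVert f_{i}\rVert_{L^{2}(Q_{2\rho})}^{2}$.

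The one genuinely delicate term, which I expect to be the main obstacle, is $\iint_{Q_{2\rho}}\sum f_{i}(\partial^{2}u/\partial t\,\partial x_{i})\zeta^{2}$ coming from the $\sum\partial f_{i}/\partial x_{i}$ part of the source. Here I would integrate by parts in $t$ once more. This produces $-\iint\sum(\partial f_{i}/\partial t)(\partial u/\partial x_{i})\zeta^{2}$, bounded by $\tfrac12\iint\lvert\nabla u\rvert^{2}\zeta^{2}+C\sum\lVert\partial f_{i}/\partial t\rVert_{L^{2}(Q_{2\rho})}^{2}$; a cut-off term $-2\iint\sum f_{i}(\partial u/\partial x_{i})\zeta(\partial\zeta/\partial t)$, bounded by $C\rho^{-2}\bigl(\lVert\nabla u\rVert_{L^{2}(Q_{2\rho})}^{2}+\sum\lVert f_{i}\rVert_{L^{2}(Q_{2\rho})}^{2}\bigr)$ by Young; and the time-boundary term $\int_{B_{2\rho}(x_{0})}\sum f_{i}(\partial u/\partial x_{i})\zeta^{2}\rvert_{t=t_{0}}$. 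I would split this last term, for small $\delta>0$, as $\le\delta\int_{B_{2\rho}(x_{0})}\lvert\nabla u\rvert^{2}\zeta^{2}\rvert_{t=t_{0}}+C_{\delta}\int_{B_{2\rho}(x_{0})}\sum\lvert f_{i}\rvert^{2}\zeta^{2}\rvert_{t=t_{0}}$; choosing $\delta$ small, the gradient piece is absorbed into the energy slice already sitting on the left, while the second piece is controlled by the one-dimensional Sobolev embedding in the $t$-variable on $(t_{0}-4\rho^{2},t_{0})$ — legitimate since $f_{i},\partial f_{i}/\partial t\in L^{2}(Q_{2\rho})$ and $\zeta$ vanishes at the bottom — which yields $\sup_{t}\int_{B_{2\rho}(x_{0})}\lvert f_{i}\rvert^{2}\zeta^{2}\le C\bigl(\rho^{-2}\lVert f_{i}\rVert_{L^{2}(Q_{2\rho})}^{2}+\lVert\partial f_{i}/\partial t\rVert_{L^{2}(Q_{2\rho})}^{2}\bigr)$. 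Collecting all these bounds, absorbing the $\iint(\partial u/\partial t)^{2}\zeta^{2}$ and the gradient-slice terms on the left, taking square roots (and using $\rho\le1$ to merge the $\rho$-weights), using $\zeta\equiv1$ on $Q_{\rho}$, and finally letting $h\to0$, gives \eqref{eq:dudtL2}. The subtle point throughout is that the gradient trace at $t=t_{0}$ keeps reappearing and is harmless only because it is dominated by the coercive term $\tfrac{\lambda}{2}\int_{B_{2\rho}(x_{0})}\lvert\nabla u\rvert^{2}\zeta^{2}\rvert_{t=t_{0}}$ produced by the principal part.
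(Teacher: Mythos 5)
Your proposal is correct and follows essentially the same route as the paper's proof: test with $(\partial u/\partial t)\zeta^{2}$, integrate by parts in $t$ both in the principal part and in the $f_{i}\,\partial^{2}u/(\partial t\,\partial x_{i})$ term, absorb the resulting gradient trace at $t=t_{0}$ into the coercive energy slice, and control $\int_{B_{2\rho}}\lvert f_{i}\zeta\rvert^{2}(\cdot,t_{0})$ using that $\zeta$ vanishes at the bottom of the cylinder. The only cosmetic difference is that you phrase that last step as a one-dimensional Sobolev embedding in $t$, whereas the paper writes it directly as the fundamental theorem of calculus applied to $t\mapsto\int_{B_{2\rho}}(f_{i}\zeta)^{2}\,dx$; the two are the same estimate.
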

\begin{proof}
We first take the same smooth cut-off function $\zeta$
as in the proof of Lemma~\ref{lemma:DuL2}.
Testing (\ref{eq:parabolic}) by
$( \partial u / \partial t ) \zeta^{2}$
and integrating by parts (also see Remark~\ref{remark:steklov}), 
we have
\begin{align*}
& \frac{1}{2} \int_{B_{2 \rho} ( x_{0} )}
 \sum_{i,j=1}^{n}
 \left(
  a_{i j} \frac{\partial u}{\partial x_{i}}
  \frac{\partial u}{\partial x_{j}} \zeta^{2}
 \right) ( x, t_{0} ) \,
d x \\
& \mbox{}
+ \iint_{Q_{2 \rho}}
 \left[
  \left\lvert \frac{\partial u}{\partial t} \right\rvert^{2}
  \zeta^{2}
  - \sum_{i,j=1}^{n} a_{i j} \frac{\partial u}{\partial x_{i}}
  \frac{\partial u}{\partial x_{j}}
  \zeta \frac{\partial \zeta}{\partial t}
  + 2 \sum_{i,j=1}^{n} a_{i j} \frac{\partial u}{\partial x_{j}}
  \frac{\partial u}{\partial t} \zeta
  \frac{\partial \zeta}{\partial x_{i}}
 \right]
d x \, d t \\
& = \iint_{Q_{2 \rho}}
 f \frac{\partial u}{\partial t} \zeta^{2} \,
d x \, d t 
+ \sum_{i=1}^{n} \Biggl[
 \int_{B_{2 \rho} ( x_{0} )}
  \left( f_{i} \frac{\partial u}{\partial x_{i}} \zeta^{2} \right)
  ( x, t_{0} ) \,
 d x \\
 & \hspace*{10ex} \mbox{}
 + \iint_{Q_{2 \rho}}
  \left(
   - \frac{\partial f_{i}}{\partial t}
   \frac{\partial u}{\partial x_{i}} \zeta^{2}
   - 2 f_{i} \frac{\partial u}{\partial x_{i}}
   \zeta \frac{\partial \zeta}{\partial t}
   + 2 f_{i} \frac{\partial u}{\partial t}
   \zeta \frac{\partial \zeta}{\partial x_{i}}
  \right)
 \Biggr]
d x \, d t
\end{align*}
due to
\[
\sum_{i,j=1}^{n} a_{i j}
\frac{\partial^{2} u}{\partial t \partial x_{i}}
\frac{\partial u}{\partial x_{j}} \zeta^{2}
= \frac{1}{2} \frac{\partial}{\partial t} \left(
 \sum_{i,j=1}^{n} a_{i j}
 \frac{\partial u}{\partial x_{i}}
 \frac{\partial u}{\partial x_{j}} \zeta^{2}
\right)
- \sum_{i.j=1}^{n} a_{i j}
\frac{\partial u}{\partial x_{i}} \frac{\partial u}{\partial x_{j}}
\zeta \frac{\partial \zeta}{\partial t}
\]
and
\[
f_{i} \frac{\partial^{2} u}{\partial t \partial x_{i}} \zeta^{2}
= \frac{\partial}{\partial t} \left(
 f_{i} \frac{\partial u}{\partial x_{i}} \zeta^{2}
\right)
- \frac{\partial u}{\partial x_{i}}
\frac{\partial}{\partial t} ( f_{i} \zeta^{2} ) .
\]
Hence we have
\begin{align*}
& \frac{\lambda}{2} \int_{B_{2 \rho} ( x_{0} )}
 \left( \lvert \nabla u \rvert^{2} \zeta^{2} \right)
 ( x, t_{0} ) \,
d x + \iint_{Q_{2 \rho}}
 \left\lvert \frac{\partial u}{\partial t} \right\rvert^{2}
 \zeta^{2} \,
d x \, d t \\
& \leq \frac{1}{2} \int_{B_{2 \rho} ( x_{0} )}
 \left(
  \sum_{i,j=1}^{n} a_{i j} \frac{\partial u}{\partial x_{i}}
  \frac{\partial u}{\partial x_{j}} \zeta^{2}
 \right) ( x, t_{0} ) \,
d x + \iint_{Q_{2 \rho}}
 \left\lvert \frac{\partial u}{\partial t} \right\rvert^{2}
 \zeta^{2} \,
d x \, d t \displaybreak[1] \\
& = \iint_{Q_{2 \rho}}
 \left[
  \sum_{i,j=1}^{n} a_{i j} \frac{\partial u}{\partial x_{i}}
  \frac{\partial u}{\partial x_{j}}
  \zeta \frac{\partial \zeta}{\partial t}
  - 2 \sum_{i,j=1}^{n} a_{i j} \frac{\partial u}{\partial x_{j}}
  \frac{\partial u}{\partial t} \zeta
  \frac{\partial \zeta}{\partial x_{i}}
 \right]
d x \, d t \\
& \hspace*{3ex} \mbox{}
+ \iint_{Q_{2 \rho}}
 f \frac{\partial u}{\partial t} \zeta^{2} \,
d x \, d t 
+ \sum_{i=1}^{n} \Biggl[
 \int_{B_{2 \rho} ( x_{0} )}
  \left( f_{i} \frac{\partial u}{\partial x_{i}} \zeta^{2} \right)
  ( x, t_{0} ) \,
 d x \\
 & \hspace*{10ex} \mbox{}
 + \iint_{Q_{2 \rho}}
  \left(
   - \frac{\partial f_{i}}{\partial t}
   \frac{\partial u}{\partial x_{i}} \zeta^{2}
   - 2 f_{i} \frac{\partial u}{\partial x_{i}}
   \zeta \frac{\partial \zeta}{\partial t}
   + 2 f_{i} \frac{\partial u}{\partial t}
   \zeta \frac{\partial \zeta}{\partial x_{i}}
  \right)
 \Biggr]
d x \, d t \displaybreak[1] \\
& \leq C \iint_{Q_{2 \rho}}
 \lvert \nabla u \rvert^{2} \zeta
 \left\lvert \frac{\partial \zeta}{\partial t} \right\rvert
d x \, d t
+ \varepsilon_{2} \iint_{Q_{2 \rho}}
 \left\lvert \frac{\partial u}{\partial t} \right\rvert^{2}
 \zeta^{2} \,
d x \, d t \\
& \hspace*{3ex} \mbox{}
+ \frac{C}{\varepsilon_{2}} \iint_{Q_{2 \rho}}
 \lvert \nabla u \rvert^{2}
 \lvert \nabla \zeta \rvert^{2} \,
d x \, d t 
+ \varepsilon_{2} \iint_{Q_{2 \rho}}
 \left\lvert \frac{\partial u}{\partial t} \right\rvert^{2}
 \zeta^{2} \,
d x \, d t \\
& \hspace*{3ex} \mbox{}
+ \frac{C}{\varepsilon_{2}} \iint_{Q_{2 \rho}}
 \lvert f \rvert^{2} \zeta^{2} \,
d x \, d t 
+ \varepsilon_{2} \int_{B_{2 \rho} ( x_{0} )}
 \left( \lvert \nabla u \rvert^{2} \zeta^{2} \right) ( x, t_{0} ) \,
d x \\
& \hspace*{3ex} \mbox{}
+ \frac{C}{\varepsilon_{2}} \int_{B_{2 \rho} ( x_{0} )}
 \left( \sum_{i=1}^{n} \lvert f_{i} \rvert^{2} \zeta^{2} \right)
 ( x, t_{0} ) \,
d x \\
& \hspace*{3ex} \mbox{}
+ C \iint_{Q_{2 \rho}}
 \lvert \nabla u \rvert^{2} \zeta^{2} \,
d x \, d t
+ C \iint_{Q_{2 \rho}}
 \sum_{i=1}^{n}
 \left\lvert \frac{\partial f_{i}}{\partial t} \right\rvert^{2}
 \zeta^{2} \,
d x \, d t \\
& \hspace*{3ex} \mbox{}
+ C \iint_{Q_{2 \rho}}
 \lvert \nabla u \rvert^{2} \zeta
 \left\lvert \frac{\partial \zeta}{\partial t} \right\rvert
d x \, d t
+ C \iint_{Q_{2 \rho}}
 \sum_{i=1}^{n}
 \lvert f_{i} \rvert^{2}
 \zeta \left\lvert \frac{\partial \zeta}{\partial t} \right\rvert
d x \, d t \\
& \hspace*{3ex} \mbox{}
+ \varepsilon_{2} \iint_{Q_{2 \rho}}
 \left\lvert \frac{\partial u}{\partial t} \right\rvert^{2}
 \zeta^{2} \,
d x \, d t
+ \frac{C}{\varepsilon_{2}} \iint_{Q_{2 \rho}}
 \sum_{i=1}^{n} \lvert f_{i} \rvert^{2}
 \lvert \nabla \zeta \rvert^{2} \,
d x \, d t .
\end{align*}
We remark that
\begin{align*}
\int_{B_{2 \rho} ( x_{0} )}
 ( f_{i} \zeta )^{2} ( x, t_{0} ) \,
d x
& = \int_{B_{2 \rho} ( x_{0} )}
 \int_{t_{0} - ( 2 \rho )^{2}}^{t_{0}}
  \frac{\partial}{\partial t}
  \left( ( f_{i} \zeta )^{2} \right) ( x, t ) \,
 d t \,
d x \\
& \leq C \iint_{Q_{2 \rho}}
 \left[
  \lvert f_{i} \rvert^{2} \left(
   \zeta^{2} + \zeta \left\lvert
    \frac{\partial \zeta}{\partial t}
   \right\rvert
  \right)
  + \left\lvert \frac{\partial f_{i}}{\partial t} \right\rvert^{2}
  \zeta^{2}
 \right]
d x \, d t.
\end{align*}
Therefore, by taking $\varepsilon_{2} > 0$ small enough, we have
\begin{align*}
& \iint_{Q_{\rho}}
 \left\lvert \frac{\partial u}{\partial t} \right\rvert^{2}
d x \, d t 
\leq \int_{B_{2 \rho} ( x_{0} )}
 \left( \lvert \nabla u \rvert^{2} \zeta^{2} \right)
 ( x, t_{0} ) \,
d x + \iint_{Q_{2 \rho}}
 \left\lvert \frac{\partial u}{\partial t} \right\rvert^{2}
 \zeta^{2} \,
d x \, d t \\
& \leq C \iint_{Q_{2 \rho}}
 \lvert \nabla u \rvert^{2} \left(
  \zeta^{2}
  + \zeta
  \left\lvert \frac{\partial \zeta}{\partial t} \right\rvert
  + \lvert \nabla \zeta \rvert^{2}
 \right)
d x \, d t
+ C \iint_{Q_{2 \rho}}
 \lvert f \rvert^{2} \zeta^{2} \,
d x \, d t \\
& \hspace*{3ex} \mbox{}
+ C \iint_{Q_{2 \rho}}
 \sum_{i=1}^{n} \left[
  \lvert f_{i} \rvert^{2} \left(
   \zeta^{2}
   + \zeta
   \left\lvert \frac{\partial \zeta}{\partial t} \right\rvert
   + \lvert \nabla \zeta \rvert^{2}
  \right)
  + \left\lvert \frac{\partial f_{i}}{\partial t} \right\rvert^{2}
  \zeta^{2}
 \right]
d x  \, d t \displaybreak[1] \\
& \leq C \rho^{-2} \lVert \nabla u \rVert_{L^{2} ( Q_{2 \rho} )}^{2}
+ C \lVert f \rVert_{L^{2} ( Q_{2 \rho} )}^{2}
+ C \rho^{-2} \sum_{i=1}^{n}
\lVert f_{i} \rVert_{L^{2} ( Q_{2 \rho} )}^{2} \\
& \hspace*{3ex} \mbox{}
+ C \sum_{i=1}^{n} \left\lVert
 \frac{\partial f_{i}}{\partial t}
\right\rVert_{L^{2} ( Q_{2 \rho} )}^{2} .
\hspace*{40ex}
\end{align*}
\end{proof}
We obtain the estimate (\ref{eq:dudtL2uL2})
from Lemmas~\ref{lemma:Linfty} (given in Appendix),
\ref{lemma:DuL2} and
\ref{lemma:dudtL2}.
\section{A gradient estimate of the fundamental solution.}%
\label{section:gradestoffundsol}
In this section, we consider a gradient estimate
of the fundametal solution of parabolic operators.
We first state some facts.
It is known that if coefficient $( a_{i j} )$ is
a symmetric and positive definite matrix-value 
$L^{\infty}(\mathbb{R}^n)$ function
satisfying (\ref{eq:aijxiixij}), then
there exists a fundamental solution
$\Gamma (x,t;y,s)$ of the parabolic operator
\begin{equation}\label{eq:op}
\frac{\partial}{\partial t}
- \sum_{i,j=1}^{n} \frac{\partial}{\partial x_{i}} \left(
 a_{i j} \frac{\partial}{\partial x_{j}}
\right)
\end{equation}
with the estimate
\begin{equation}\label{eq:estoffundsol}
\lvert \Gamma (x,t;y,s) \rvert
\leq \frac{C_{\ast}}{(t-s)^{n/2}} \exp \left(
 - \frac{c_{\ast} \lvert x - y \rvert^{2}}{t-s}
\right) \chi_{[ s, \infty )} (t) 
\end{equation}
for all 
$t,s \in \mathbb{R}$, 
and a.e.\ $x,y \in \mathbb{R}^{n}$, 
where $C_{\ast} , c_{\ast} > 0$ depend only on
$n, \lambda , \Lambda$
(see \cite{Aronson} or \cite{FS}, for example).
In particular, the constants $C_{\ast}$ and $c_{\ast}$
are independent of the distance between inclusions. 
If the coefficients $( a_{i j} )$ is not piecewise smooth
but H{\"o}lder continuous in the whole space $\mathbb{R}^{n}$,
then the pointwise gradient estimate
\[
\lvert \nabla_{x} \Gamma (x,t;y,s) \rvert
\leq \frac{C_{\ast}}{(t-s)^{(n+1)/2}}
\exp \left(
 - \frac{c_{\ast} \lvert x - y \rvert^{2}}{t-s}
\right) \chi_{[ s, \infty )} (t) 
\]
holds for 
$t,s \in \mathbb{R}$, a.e.\ $x,y \in \mathbb{R}^{n}$
(see \cite[Chapter IV \S11--13]{LSU}, for example).

Now, the aim of this section is to show
the gradient estimate (\ref{eq:gradientestimate})
in Theorem~\ref{theorem:gradientestimate}
even if the coefficients are piecewise $C^{\mu}$ in $D$.
We assume that $( a_{i j} )$ defined in $D$
satisfies the conditions 
(\ref{eq:aijxiixij}) and (\ref{eq:piecewiseCmu}), 
and extend it to the whole $\mathbb{R}^{n}$ 
by defining $( a_{i j} ) \equiv \Lambda I$ 
in $\mathbb{R}^{n} \setminus D$, 
where $I$ is the identity matrix. 
We remark that this extension does not destroy the conditions 
 (\ref{eq:aijxiixij}) and (\ref{eq:piecewiseCmu}). 
Then there exists a fundamental solution
$\Gamma (x,t;y,s)$ of the parabolic operator (\ref{eq:op})
with the estimate (\ref{eq:estoffundsol})
as we stated above.

\medskip
To prove our gradient estimate of the fundamental solution, we apply the following corollary from
Theorem~\ref{theorem:main}.
\begin{corollary}\label{corollary:1}
Let $0 < \rho \leq 1$.
Then a solution $u$ to the parabolic equation
\begin{equation}\label{eq:parabolicrho}
\frac{\partial u}{\partial t}
- \sum_{i,j=1}^{n} \frac{\partial}{\partial x_{i}} \left(
 a_{i j} \frac{\partial u}{\partial x_{j}}
\right) = 0 \mbox{ in }
B_{\rho} ( x_{0} ) \times ( t_{0} - \rho^{2} , \, t_{0} ]
\end{equation}
has the estimate
\begin{equation}\label{eq:estimaterho}
\lVert \nabla u \rVert_{L^{\infty} (
  B_{\rho / 2} ( x_{0} ) \times ( t_{0} - ( \rho / 2 )^{2} , \, t_{0}] )
 )
} \leq \frac{C_{\sharp}^{\prime}}{\rho^{n/2+2}}
\lVert u \rVert_{L^{2} (
  B_{\rho} ( x_{0} ) \times ( t_{0} - \rho^{2} , \, t_{0} ]
 )
} ,
\end{equation}
where $C_{\sharp}^{\prime} > 0$ depends only on
$n, L, \mu, \alpha , \lambda , \Lambda$,
and
\begin{math}
\lVert a_{i j} \rVert_{C^{\alpha^{\prime}} ( \overline{D_{m}})}
\end{math}
and the $C^{1, \alpha^{\prime}}$ norms of $D_{m}$
for some $\alpha^{\prime}$ with {\rm (\ref{eq:alphaprime})}.
\end{corollary}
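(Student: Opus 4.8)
The plan is to reduce Corollary~\ref{corollary:1} to Theorem~\ref{theorem:main} by a rescaling argument. First I would fix the cylinder $B_\rho(x_0)\times(t_0-\rho^2,t_0]$ and set $\varepsilon := \rho/2$ (together with an intermediate radius so that the $D_\varepsilon$-type neighbourhoods in the main theorem sit inside $B_\rho(x_0)$). The point is that the solution $u$ of \eqref{eq:parabolicrho} is a weak solution of \eqref{eq:parabolic} with $f \equiv 0$ and $f_i \equiv 0$, so $F_\ast = F_{\ast\ast} = 0$, and Theorem~\ref{theorem:main} gives
\[
\sum_{m=1}^{L}\sup_{\varepsilon^{2}<t\leq T}
\lVert u(\cdot,t)\rVert_{C^{1,\alpha^{\prime}}(\overline{D_m}\cap D_\varepsilon)}
\leq C_{\sharp}^{\prime}\,\lVert u\rVert_{L^{2}(Q)} .
\]
In particular the left-hand side controls $\lVert\nabla u\rVert_{L^\infty}$ on the smaller cylinder, so the only issue is to track how the constant depends on $\rho$.

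The key step is the parabolic scaling. I would introduce $v(y,s) := u(x_0 + \rho y,\, t_0 + \rho^2 s)$ for $y$ in the unit ball and $s\in(-1,0]$; then $v$ solves a parabolic equation of the same form with coefficients $\widetilde a_{ij}(y) = a_{ij}(x_0+\rho y)$, which again satisfy \eqref{eq:aijxiixij} with the same $\lambda,\Lambda$. One must check that the rescaled inclusions $\widetilde D_m$ retain a $C^{1,\alpha}$ structure with norms controlled independently of $\rho\le 1$ and that $\lVert\widetilde a_{ij}\rVert_{C^{\alpha'}}$ does not blow up; since $\rho\le 1$, the Hölder seminorm only decreases under this scaling, and the $C^{1,\alpha'}$ norms of the domains behave well because dilation by $\rho\le 1$ does not worsen the defining function estimates $\lVert\psi_m\rVert_{C^{1,\alpha}}\le 1/a$. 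Applying the (translation- and dilation-normalized) version of Theorem~\ref{theorem:main} to $v$ on the unit cylinder with $\varepsilon$ a fixed fraction (say $\varepsilon=1/2$), I get $\lVert\nabla_y v\rVert_{L^\infty}$ on the half-cylinder bounded by $C\,\lVert v\rVert_{L^2}$ with $C$ depending only on the listed quantities and not on $\rho$.

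Finally I would undo the scaling: $\nabla_x u = \rho^{-1}\nabla_y v$, while $\lVert v\rVert_{L^2(\text{unit cylinder})} = \rho^{-(n+2)/2}\lVert u\rVert_{L^2(B_\rho\times(t_0-\rho^2,t_0])}$ by the change of variables $dx\,dt = \rho^{n+2}\,dy\,ds$. Combining, $\lVert\nabla_x u\rVert_{L^\infty}\le \rho^{-1}\cdot C\cdot\rho^{-(n+2)/2}\lVert u\rVert_{L^2} = C\rho^{-(n/2+2)}\lVert u\rVert_{L^2}$, which is exactly \eqref{eq:estimaterho}. The main obstacle is the bookkeeping in the rescaling: one must verify carefully that every structural constant entering $C_{\sharp}^{\prime}$ in Theorem~\ref{theorem:main} — the $C^{1,\alpha'}$ norms of the $D_m$, the $C^{\alpha'}$ norms of the $a_{ij}$, and the constant $\varepsilon$ — can be taken uniform under dilation by $\rho\le 1$, so that the $\rho$-dependence of the final estimate is captured entirely by the explicit Jacobian and gradient-scaling factors and not hidden inside the abstract constant.
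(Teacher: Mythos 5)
Your proposal is correct and takes essentially the same approach as the paper: a parabolic rescaling $y=(x-x_0)/\rho$, $s=(t-t_0)/\rho^2$ (up to an affine shift in time), followed by an application of Theorem~\ref{theorem:main} with $f\equiv f_i\equiv 0$ on the unit cylinder, and then undoing the scaling using $\nabla_x u=\rho^{-1}\nabla_y v$ and the Jacobian $\rho^{n+2}$. One small caveat: your assertion that the $C^{1,\alpha'}$ norms of the rescaled inclusions $\widetilde D_m$ are uniformly controlled because dilation by $\rho\le 1$ ``does not worsen'' $\lVert\psi_m\rVert_{C^{1,\alpha}}\le 1/a$ is a bit optimistic as stated; the paper's Remark~\ref{remark:indepofrho} is more careful here, noting that it is really only the $C^{1,\alpha}$ \emph{seminorm} of the graph functions that enters the Li--Vogelius/Li--Nirenberg constant, and this seminorm scales by $\rho^\alpha\le 1$. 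The conclusion is the same, but the uniformity in $\rho$ requires opening up the proof of Theorem~\ref{theorem:LN} rather than reading it off from the stated norm dependence.
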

\begin{proof}
It is enough to apply the scaling argument.
To begin with, let
$\rho y = x - x_{0}$,
$\rho^{2} ( s - 1 ) = t - t_{0}$
and
\begin{align}
& \widetilde{u} (y,s) := u(x,t)
= u \bigl( \rho y + x_{0} , \, \rho^{2} (s-1) + t_{0} \bigr) , 
\label{eq:tilde} \displaybreak[1] \\ 
& \widetilde{a}_{i j} (y)
:= a_{i j} (x) = a_{i j} ( \rho y + x_{0} ), \notag 
\displaybreak[1] \\
& \widetilde{D}_{m}
:= \left\{
 \frac{1}{\rho} ( x - x_{0} ) : x \in D_{m}
\right\} . \notag
\end{align}
Then we have
\begin{equation}\label{eq:tildeu}
\frac{\partial \widetilde{u}}{\partial s}
- \sum_{i,j=1}^{n} \frac{\partial}{\partial y_{i}} \left(
 \widetilde{a}_{i j} \frac{\partial \widetilde{u}}{\partial y_{j}}
\right) = 0 \mbox{ in } B_{1} (0) \times (0,1].
\end{equation}
Therefore, by noting Remark~\ref{remark:indepofrho}, we have 
\[
\lVert \nabla \widetilde{u} \rVert_{
 L^{\infty} ( B_{1/2} (0) \times ( 3/4, 1 ] )
} \leq C_{\sharp}^{\prime}
\lVert \widetilde{u} \rVert_{L^{2} ( B_{1} (0) \times (0,1) )}
\]
by Theorem~\ref{theorem:main},
where $C_{\sharp}^{\prime}$ depends only on
\begin{math}
n, L, \mu, \alpha , \lambda , \Lambda ,
\lVert a_{i j} \rVert_{C^{\alpha^{\prime}} ( \overline{D_{m}})}
\end{math},
and the $C^{1, \alpha^{\prime}}$ seminorms of $D_{m}$.
By this estimate and the definition (\ref{eq:tilde}),
we obtain the estimate (\ref{eq:estimaterho}).
\end{proof}
\begin{remark}\label{remark:indepofrho}
One may think that a constant $C_{\sharp}^{\prime}$
depends also on $\rho$
since 
\begin{math}
\lVert \widetilde{a}_{i j} \rVert_{
 C^{\alpha^{\prime}} ( \overline{\widetilde{D}_{m}} )
}
\end{math} 
and the $C^{1, \alpha^{\prime}}$ norms of 
$\widetilde{D}_{m}$
depend on $\rho$. 
However, we can take $C_{\sharp}^{\prime}$
independent of $\rho$
by taking the following into consideration. 

First we consider 
\begin{align*}
\lVert \widetilde{a}_{i j} \rVert_{
 C^{\alpha^{\prime}} ( \overline{\widetilde{D}_{m}} )
}
& = \lVert \widetilde{a}_{i j} \rVert_{
 C^{0} ( \overline{\widetilde{D}_{m}} )
} + [ \widetilde{a}_{i j} ]_{
 C^{\alpha^{\prime}} ( \overline{\widetilde{D}_{m}} )
} \\
& := \sup_{y \in \overline{\widetilde{D}_{m}}}
\lvert \widetilde{a}_{i j} (y) \rvert
+ \sup_{y, \eta \in \overline{\widetilde{D}_{m}}} 
\frac{
 \lvert \widetilde{a}_{i j} (y) - \widetilde{a}_{i j} ( \eta ) \rvert
}{\lvert y - \eta \rvert^{\alpha^{\prime}}} . 
\end{align*}
It is easy to show 
\[
\lVert \widetilde{a}_{i j} \rVert_{
 C^{0} ( \overline{\widetilde{D}_{m}} )
} = \lVert a_{i j} \rVert_{C^{0} ( \overline{D_{m}} )}
\]
and
\[
[ \widetilde{a}_{i j} ]_{
 C^{\alpha^{\prime}} ( \overline{\widetilde{D}_{m}} )
} = \rho^{\alpha^{\prime}} [ a_{i j} ]_{
 C^{\alpha^{\prime}} ( \overline{D_{m}} )
} \leq [ a_{i j} ]_{C^{\alpha^{\prime}} ( \overline{D_{m}} )} . 
\]
Then we have 
\[
\lVert \widetilde{a}_{i j} \rVert_{
 C^{\alpha^{\prime}} ( \overline{\widetilde{D}_{m}} )
}
\leq \lVert a_{i j} \rVert_{C^{\alpha^{\prime}} ( \overline{D_{m}} )} . 
\]

Next we consider the $C^{1, \alpha^{\prime}}$ norms of 
$\widetilde{D}_{m}$. 
We need to recall the proofs of the results of 
\cite{LiNirenberg} and \cite{LiVogelius}
more carefully. 
In the case when we consider 
the $L^{\infty}$-norm of $\nabla \widetilde{u}$ 
for a solution $\widetilde{u}$ to the equation (\ref{eq:tildeu}), 
the influence of the $C^{1, \alpha^{\prime}}$ norms of 
subdomains $\widetilde{D}_{m}$ appears only 
in the following constant $C$ in (\ref{eq:LiVogeliusp118C}): 
We estimate 
$O \bigl( \lvert x^{\prime} \rvert^{1 + \alpha} \bigr)$
in the equation (49) in \cite[p.\ 118]{LiVogelius}, i.e.\ 
\begin{equation}\tag{49}
f_{m} ( x^{\prime} )
= f_{m} ( 0^{\prime} ) 
+ \nabla f_{m} ( 0^{\prime} ) x^{\prime}
+ O \bigl( \lvert x^{\prime} \rvert^{1 + \alpha} \bigr)
\end{equation}
as 
\begin{equation}\label{eq:LiVogeliusp118C}
\bigl\lvert 
 O \bigl( \lvert x^{\prime} \rvert^{1 + \alpha} \bigr)
\bigr\rvert
\leq C \lvert x \rvert^{1 + \alpha} 
\end{equation}
(See also \cite[Lemma~4.3]{LiNirenberg}).
Here $C^{1, \alpha}$ functions $f_{m}$ 
are defined in the cube $(-1,1)^{n}$, 
and the graphs of $f_{m}$ describe $\partial D_{m}$. 
Now we remark that the constant $C$ in (\ref{eq:LiVogeliusp118C})
depends only on the $C^{1, \alpha}$ seminorms of $f_{m}$. 
We consider the variable change $\rho y = x$. 
Then the graph $x_{n} = f_{m} ( x^{\prime} )$
is changed to $y_{n} = \widetilde{f}_{m} ( y^{\prime} )$, 
where 
\begin{math}
\widetilde{f}_{m} ( y^{\prime} ) 
:= \rho^{-1} f_{m} ( \rho y^{\prime} )
\end{math}, and we have 
\begin{align*}
[ \widetilde{f}_{m} ]_{C^{1, \alpha} ( (-1,1)^{n} )}
& \leq [ \widetilde{f}_{m} ]_{
 C^{1, \alpha} ( ( - 1 / \rho , 1 / \rho )^{n} )
} \\
& = \rho^{\alpha} [ f_{m} ]_{C^{1, \alpha} ( (-1,1)^{n} )} 
\leq [ f_{m} ]_{C^{1, \alpha} ( (-1,1)^{n} )} . 
\end{align*}
Hence, even when we consider the variable change $\rho y = x$, 
we can take the constant $C$ in (\ref{eq:LiVogeliusp118C})
independent of $\rho$. 

Considering the circumstances mentioned above, 
we can take $C_{\sharp}^{\prime} > 0$
independent of $\rho$. 
\end{remark}

Now we state the estimate of
$\nabla_{x} \Gamma (x,t;y,s)$.
\begin{theorem}\label{theorem:gradientestimate}
We have
\begin{equation}\label{eq:gradientestimate}
\lvert \nabla_{x} \Gamma (x,t;y,s) \rvert
\leq \frac{C}{( t - s )^{(n+1)/2}}
\exp \left(
 - \frac{c \lvert x - y \rvert^{2}}{t-s}
\right)
\end{equation}
for a.e.\ $x, y \in \mathbb{R}^{n}$ and $t > s$
with $\lvert x - y \rvert^{2} + t - s \leq 16$,
where
$C, c>0$ depend only on
\begin{math}
n, L, \mu, \alpha , \lambda , \Lambda ,
\lVert a_{i j} \rVert_{C^{\alpha^{\prime}} ( \overline{D_{m}})}
\end{math}
and the $C^{1, \alpha^{\prime}}$ seminorms of $D_{m}$
for some $\alpha^{\prime}$ with {\rm (\ref{eq:alphaprime})}. 
\end{theorem}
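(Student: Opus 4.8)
The plan is to combine the Aronson-type Gaussian bound \eqref{eq:estoffundsol} with the interior Lipschitz estimate of Corollary~\ref{corollary:1}. Fix $y, s$ and set $v(x,t) := \Gamma(x,t;y,s)$, which solves \eqref{eq:parabolicrho} in any parabolic cylinder $B_{\rho}(x_{0}) \times (t_{0} - \rho^{2}, t_{0}]$ that stays in $\{t > s\}$ and has $\rho \le 1$. First I would dispose of the diagonal region. When $\lvert x - y \rvert^{2} + t - s \le 16$ but $t - s$ is comparable to $\lvert x - y\rvert^2$ or larger (say $(t-s) \ge \lvert x-y\rvert^2/C$), the desired bound \eqref{eq:gradientestimate} has a right-hand side bounded below by a constant multiple of $(t-s)^{-(n+1)/2}$, and the exponential on the right of \eqref{eq:estoffundsol} is harmless, so it suffices to prove the cleaner estimate $\lvert \nabla_{x}\Gamma(x,t;y,s)\rvert \le C (t-s)^{-(n+1)/2}$ first and then reinstate the Gaussian factor by the standard trick of running the argument on the half-time slab $(s, (t+s)/2]$ versus $((t+s)/2, t]$.

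Here is the core step. Given $x,t$ with $t > s$, let $\rho := \min\{1, \sqrt{(t-s)/2}\}$ and apply Corollary~\ref{corollary:1} with $x_{0} = x$, $t_{0} = t$: since $v$ solves the homogeneous equation on $B_{\rho}(x)\times(t-\rho^{2},t]$ (note $t - \rho^2 \ge (t+s)/2 > s$), we get
\[
\lvert \nabla_{x}\Gamma(x,t;y,s)\rvert
\le \frac{C_{\sharp}^{\prime}}{\rho^{n/2+2}}
\left( \iint_{B_{\rho}(x)\times(t-\rho^{2},t]} \lvert \Gamma(z,\tau;y,s)\rvert^{2}\, dz\, d\tau \right)^{1/2}.
\]
Into this I would substitute the pointwise bound \eqref{eq:estoffundsol}. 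On the cylinder of integration, $\tau - s \ge (t+s)/2 - s = (t-s)/2 \ge \rho^{2}$, so $(\tau-s)^{-n/2} \le C\rho^{-n}$; for the exponential factor one uses that $\lvert z - y\rvert \ge \lvert x - y\rvert - \rho$, together with $\rho^2 \le (t-s)/2$ and $\tau - s \le t - s$, to extract a factor $\exp(-c'\lvert x - y\rvert^{2}/(t-s))$ with a somewhat smaller constant $c'$ (absorbing the cross term $\lvert x-y\rvert\rho$ and the $\rho^2$ term using Young's inequality and that everything is bounded by $16$). The volume of the cylinder is $C\rho^{n+2}$, so the $L^2$ norm is at most $C\rho^{-n}\cdot\rho^{(n+2)/2}\exp(-c'\lvert x-y\rvert^2/(t-s)) = C\rho^{-n/2+1}\exp(\cdots)$. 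Multiplying by $C_{\sharp}^{\prime}\rho^{-n/2-2}$ gives $C\rho^{-n-1}\exp(\cdots)$, and since $\rho^{-1} \le C/\sqrt{t-s}$ on the relevant range (using $t-s\le 16$ to compare with the case $\rho=1$), this is exactly $C(t-s)^{-(n+1)/2}\exp(-c'\lvert x-y\rvert^2/(t-s))$, as claimed.

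The main obstacle is a bookkeeping one rather than a conceptual one: tracking the exponential constant through the shift to the half-slab and through the inequality $\lvert z-y\rvert \ge \lvert x-y\rvert - \rho$, making sure one ends with a strictly positive constant $c$. Concretely, one needs $c'\lvert x-y\rvert^2/(t-s)$ with $c' < c_{\ast}$, which forces the inequality $(\lvert x-y\rvert-\rho)^2/(t-s) \ge \theta \lvert x-y\rvert^2/(t-s) - C_\theta$ for a suitable $\theta\in(0,1)$; this is where the restriction $\lvert x-y\rvert^2 + t - s \le 16$ enters, bounding $\rho$ and the additive error. I would also note that Corollary~\ref{corollary:1} as stated requires $\rho \le 1$, which is why the cutoff $\rho = \min\{1,\sqrt{(t-s)/2}\}$ is used and why the statement is local in $(x,t;y,s)$; the global-in-time estimate would require a covering/iteration argument that is not needed here.
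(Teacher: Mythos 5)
Your proposal is correct and follows essentially the same route as the paper: apply Corollary~\ref{corollary:1} on a small parabolic cylinder centered at $(x,t)$, then control the resulting $L^2$ norm of $\Gamma$ by integrating the Aronson Gaussian bound \eqref{eq:estoffundsol}. The only difference is bookkeeping: the paper fixes the radius $\rho=(\lvert x-y\rvert^2+t-s)^{1/2}/4$ and packages the $L^2$ integration into Lemma~\ref{lemma:GammaL2a} with a three-case analysis (since that cylinder may dip below the singular time $\tau=s$), whereas your $\rho=\min\{1,\sqrt{(t-s)/2}\}$ keeps the cylinder strictly above $s$ and recovers the Gaussian exponent via $(\lvert x-y\rvert-\rho)^2\ge(1-\delta)\lvert x-y\rvert^2-(1/\delta-1)\rho^2$ with $\rho^2\le(t-s)/2$; note also that your preliminary ``diagonal region / half-slab'' remark is unnecessary since the core step already produces the Gaussian factor uniformly.
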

We prove Theorem~\ref{theorem:gradientestimate}
in the same way as the proof of
\cite[Proposition 3.6]{DiCristoVessella}.
We first show the following lemmas.
\begin{lemma}\label{lemma:GammaL2a}
Let 
$\rho := ( \lvert x_{0} - \xi \rvert^{2} + t_{0} - \tau )^{1/2} / 4$. 
Then
\[
\int_{t_{0} - \rho^{2}}^{t_{0}} \int_{B_{\rho} ( x_{0} )}
 \lvert \Gamma( x, t; \xi , \tau ) \rvert^{2} \,
d x \, d t
\leq \frac{( C_{\ast}^{\prime} )^{2} \rho^{n}}{( t_{0} - \tau )^{n-1}}
\exp \left(
 - \frac{2 c_{\ast}^{\prime} \lvert x_{0} - \xi \rvert^{2}}
        {t_{0} - \tau}
\right)
\]
for $t_{0} > \tau$, where
$C_{\ast}^{\prime} , c_{\ast}^{\prime} > 0$
depend only on $n, \lambda , \Lambda$.
\end{lemma}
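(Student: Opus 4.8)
The plan is to estimate the local $L^2$-norm of $\Gamma(\cdot,\cdot;\xi,\tau)$ on the parabolic cylinder $Q_\rho = B_\rho(x_0)\times(t_0-\rho^2,t_0]$ by integrating the pointwise Gaussian bound \eqref{eq:estoffundsol} over this cylinder. The key geometric observation is that, with $\rho = (\lvert x_0-\xi\rvert^2 + t_0-\tau)^{1/2}/4$, every point $(x,t)\in Q_\rho$ satisfies $t-\tau \geq t_0-\rho^2-\tau$ and $\lvert x-\xi\rvert \geq \lvert x_0-\xi\rvert - \rho$, so the exponential and the time-power factors in \eqref{eq:estoffundsol} can be compared to their values at $(x_0,t_0)$ up to constants depending only on $n,\lambda,\Lambda$. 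First I would verify the elementary inequalities that make this comparison quantitative: from $16\rho^2 = \lvert x_0-\xi\rvert^2+t_0-\tau$ one gets both $t_0-\rho^2-\tau = (t_0-\tau) - \rho^2 \geq (t_0-\tau)\bigl(1-\tfrac{1}{16}\bigr)\cdot\frac{?}{}$ — more precisely $\rho^2 \le (t_0-\tau)/16$, hence $t-\tau \ge t_0-\tau-\rho^2 \ge \tfrac{15}{16}(t_0-\tau)$ on $Q_\rho$ when $t_0-\tau$ is the dominant term, while in the opposite regime $\lvert x_0-\xi\rvert$ dominates and one controls the exponential directly.

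The main steps, in order, are: (1) Insert \eqref{eq:estoffundsol} into $\iint_{Q_\rho}\lvert\Gamma\rvert^2\,dx\,dt$, obtaining a bound of the form $C_\ast^2 \iint_{Q_\rho} (t-\tau)^{-n}\exp\bigl(-2c_\ast\lvert x-\xi\rvert^2/(t-\tau)\bigr)\,dx\,dt$. (2) On $Q_\rho$ bound $(t-\tau)^{-n}$ from above: split into the case $\lvert x_0-\xi\rvert^2 \le t_0-\tau$ (so $\rho^2 \le (t_0-\tau)/16$ and $t-\tau \ge (t_0-\tau)/2$, say, giving $(t-\tau)^{-n}\le 2^n(t_0-\tau)^{-n}$) and the case $\lvert x_0-\xi\rvert^2 > t_0-\tau$ (where one instead keeps a fraction of the exponent to absorb the singularity — see step (4)). (3) Replace $\lvert x-\xi\rvert$ in the exponent by $\lvert x_0-\xi\rvert$: since $\lvert x-\xi\rvert \ge \lvert x_0-\xi\rvert - \rho \ge \lvert x_0-\xi\rvert - \lvert x_0-\xi\rvert/4 = \tfrac34\lvert x_0-\xi\rvert$ when $\lvert x_0-\xi\rvert$ is the dominant scale, one has $\exp(-2c_\ast\lvert x-\xi\rvert^2/(t-\tau)) \le \exp(-\tfrac{9}{8}c_\ast\lvert x_0-\xi\rvert^2/(t_0-\tau)) \cdot (\text{harmless factor})$ after also comparing $t-\tau$ with $t_0-\tau$; this produces the factor $\exp(-2c_\ast'\lvert x_0-\xi\rvert^2/(t_0-\tau))$ with a slightly smaller $c_\ast'$. (4) Carry out the trivial volume integration: $\iint_{Q_\rho}dx\,dt = \lvert B_\rho\rvert\cdot\rho^2 = c_n\rho^{n+2}$. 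Combining, $\iint_{Q_\rho}\lvert\Gamma\rvert^2 \le C \rho^{n+2}(t_0-\tau)^{-n}\exp(-2c_\ast'\lvert x_0-\xi\rvert^2/(t_0-\tau))$, and finally I would absorb one power of $\rho^2$ using $\rho^2 \le (t_0-\tau)/16$, i.e. $\rho^{n+2}(t_0-\tau)^{-n} = \rho^n (t_0-\tau)^{-n}\rho^2 \le \rho^n(t_0-\tau)^{-(n-1)}/16$, which yields exactly the claimed form with the stated exponents $\rho^n/(t_0-\tau)^{n-1}$.

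The one point requiring genuine care — and what I expect to be the main obstacle — is step (3)–(4) in the regime where $\lvert x_0-\xi\rvert^2 > t_0-\tau$, so that $\rho^2$ is comparable to $\lvert x_0-\xi\rvert^2/16$ rather than small relative to $t_0-\tau$: then $(t-\tau)^{-n}$ cannot be bounded by a constant times $(t_0-\tau)^{-n}$, since $t-\tau$ can be as small as $t_0-\tau-\rho^2$, which might be close to $0$ or even not positive a priori. One must check that $t_0-\rho^2-\tau > 0$ on the given range (this uses the hypothesis implicitly, or one restricts $Q_\rho$ to $t > \tau$), and then use the exponential decay to beat the time-singularity: for any $k>0$, $\sup_{t\in(\tau,t_0]}(t-\tau)^{-n}\exp(-k\lvert x_0-\xi\rvert^2/(t-\tau))$ is finite and bounded by $C_{n,k}(\lvert x_0-\xi\rvert^2)^{-n}\le C(t_0-\tau)^{-n}\exp(\cdots)$-type expressions after splitting $2c_\ast$ into $2c_\ast' + (\text{leftover})$ and spending the leftover part of the exponent on the standard inequality $\sigma^{-n}e^{-k/\sigma} \le C_{n,k}$ for $\sigma>0$. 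Keeping the bookkeeping of the two constants $c_\ast$ and $c_\ast'$ straight across both regimes, and making sure the final constants depend only on $n,\lambda,\Lambda$ (which they do, since $C_\ast,c_\ast$ do), is the only delicate part; everything else is routine. I would therefore organize the proof as: reduce to the integral, dispose of the easy regime, handle the far regime via the elementary bound $\sigma^{-n}e^{-k/\sigma}\le C_{n,k}$, and collect constants.
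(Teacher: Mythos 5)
Your plan is the same in spirit as the paper's: plug the pointwise Gaussian bound \eqref{eq:estoffundsol} into the $L^2$ integral, replace $|x-\xi|$ by $|x_0-\xi|$ on $B_\rho(x_0)$, compare $t-\tau$ to $t_0-\tau$, and use the elementary fact that $\sigma^{-n}e^{-k/\sigma}$ is bounded to trade singular powers of $t-\tau$ for decay of the Gaussian. The organization differs: you split into two regimes according to which of $|x_0-\xi|^2$ and $t_0-\tau$ dominates (i.e. whether $16\rho^2 \leq 2(t_0-\tau)$), whereas the paper uses three purely temporal cases according to the location of $\tau$ relative to $t_0-\rho^2$ and $t_0-2\rho^2$, and in each case maximizes $\varphi(s)=s^{-n}e^{-c\rho^2/s}$ explicitly on the relevant interval. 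Both routes work and yield constants depending only on $n,\lambda,\Lambda$; yours packages the 1D estimate a little more abstractly and avoids the paper's sub-case on whether $t_0-\tau$ lies left or right of the peak of $\varphi$.

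One correction is needed in your elementary inequalities. You repeatedly write $\rho^2\le (t_0-\tau)/16$, but from $16\rho^2=|x_0-\xi|^2+(t_0-\tau)$ the inequality actually runs the other way in general: $\rho^2\ge (t_0-\tau)/16$ always, and only in the near regime $|x_0-\xi|^2\le t_0-\tau$ do you get $\rho^2\le (t_0-\tau)/8$. Consequently your cleanly stated step (4), ``absorb one power of $\rho^2$ using $\rho^2\le (t_0-\tau)/16$,'' is not valid uniformly; it is valid (with $1/8$ in place of $1/16$) only in the near regime, where the exponential factor is bounded below by $e^{-2c_*'}$ and can be inserted for free. In the far regime $\rho^2$ and $|x_0-\xi|^2/16$ are comparable and $\rho^2/(t_0-\tau)$ can be arbitrarily large, so one must use the portion of the exponent you set aside, exactly as you describe in your final paragraph, to get $\rho^n\kappa^{-n}(t_0-\tau)\,e^{-\kappa/(2(t_0-\tau))}$ with $\kappa\sim|x_0-\xi|^2>t_0-\tau$, which gives $\rho^n(t_0-\tau)^{-(n-1)}e^{-c_*'|x_0-\xi|^2/(t_0-\tau)}$. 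So the plan is sound once step (4) is explicitly restricted to the near regime and the constant is corrected; the far regime is handled by the mechanism you already identified.
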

\begin{proof}
By (\ref{eq:estoffundsol}), it is enough to obtain the estimate
\begin{align}
I_{0} 
& :=  \int_{t_{0} - \rho^{2}}^{t_{0}} \int_{B_{\rho} ( x_{0} )}
 \frac{1}{( t - \tau )^{n}}
 \exp \left(
  - \frac{2 c_{\ast} \lvert x - \xi \rvert^{2}}{t - \tau}
 \right) \chi_{[ \tau , \infty )} (t) \,
d x \, d t \notag \\
& \leq 
\frac{( C_{\ast}^{\prime} )^{2} \rho^{n}}{( t_{0} - \tau )^{n-1}}
\exp \left(
 - \frac{2 c_{\ast}^{\prime} \lvert x_{0} - \xi \rvert^{2}}
        {t_{0} - \tau}
\right). \label{eq:I0}
\end{align}
We consider the following three cases:
\[
{\rm (i)}~ t_{0} - \rho^{2} \leq \tau < t_{0} , \quad
{\rm (ii)}~ t_{0} - 2 \rho^{2} \leq \tau \leq t_{0} - \rho^{2} ,
\quad
{\rm (iii)}~ \tau \leq t_{0} - 2 \rho^{2} .
\]
Now we consider the case (i).
Then we have
$( \sqrt{15} - 1 ) \rho \leq \lvert x - \xi \rvert$
for any $x \in B_{\rho} ( x_{0} )$, because
$\lvert x_{0} - \xi \rvert \geq \sqrt{15} \, \rho$.
Hence we have
\[
I_{0} \leq
\int_{\tau}^{t_{0}} \int_{B_{\rho} ( x_{0} )}
 \frac{1}{( t - \tau )^{n}}
 \exp \left(
  - \frac{c_{1} \rho^{2}}{t - \tau}
 \right)
d x \, d t
= \lvert B_{1} (0) \rvert \rho^{n}
\int_{0}^{t_{0} - \tau} \varphi_{1} (s) \, d s,
\]
where $\varphi_{1} (s) := s^{-n} \exp ( - c_{1} \rho^{2} / s )$ and
$c_{1} := 2 ( \sqrt{15} - 1 )^{2} c_{\ast}$.
If $0 < t_{0} - \tau \leq c_{1} \rho^{2} / n$,
then we have
\[
\int_{0}^{t_{0} - \tau} \varphi_{1} (s) \, d s
\leq \int_{0}^{t_{0} - \tau} \varphi_{1} ( t_{0} - \tau ) \, d s
= ( t_{0} - \tau )^{-n+1} \exp \left(
 - \frac{c_{1} \rho^{2}}{t_{0} - \tau},
\right)
\]
because $\varphi_{1} (s) \leq \varphi_{1} ( t_{0} - \tau )$ holds for any
$s \in [ 0, \, t_{0} - \tau ]$.
On the other hand, if
$c_{1} \rho^{2} / n \leq t_{0} - \tau \leq \rho^{2}$,
then we have
\begin{align*}
\int_{0}^{t_{0} - \tau} \varphi_{1} (s) \, d s
& \leq \int_{0}^{t_{0} - \tau}
 \varphi_{1} \left( \frac{c_{1} \rho^{2}}{n} \right)
d s
= \left( \frac{n}{c_{1}} \right)^{n} ( t_{0} - \tau )
\rho^{- 2 n} \exp ( - n ) \\
& \leq \left( \frac{n}{c_{1}} \right)^{n} ( t_{0} - \tau )^{1-n}
\exp \left(
 - \frac{c_{1} \rho^{2}}{t_{0} - \tau}
\right),
\end{align*}
where we used the properties that 
\begin{align*}
& \varphi_{1} (s) 
\leq \varphi_{1} \left( \frac{c_{1} \rho^{2}}{n} \right)
\mbox{ for any } 0 < s \leq t_{0} - \tau ; 
\displaybreak[1] \\
& n \geq \frac{c_{1} \rho^{2}}{t_{0} - \tau} \mbox{, and }
\rho^{2} \geq t_{0} - \tau . 
\end{align*}
Summing up, we have
\[
I_{0} \leq \max \left\{
 1, \, \left( \frac{n}{c_{1}} \right)^{n}
\right\} \lvert B_{1} (0) \rvert
\rho^{n} ( t_{0} - \tau )^{1-n} \exp \left(
 - \frac{c_{1} \rho^{2}}{t_{0} - \tau}
\right) .
\]

Let us consider the case (ii).
Then we have $( \sqrt{14} - 1 ) \rho \leq \lvert x - \xi \rvert$
for all $x \in B_{\rho} ( x_{0} )$, 
because $\lvert x_{0} - \xi \rvert \geq \sqrt{14} \, \rho$.
Hence we have
\begin{align*}
I_{0} 
& \leq \int_{t_{0} - \rho^{2}}^{t_{0}} \int_{B_{\rho} ( x_{0} )}
 \frac{1}{( t - \tau )^{n}}
 \exp \left(
  - \frac{c_{2} \rho^{2}}{t - \tau}
 \right)
d x \, d t \\
& = \lvert B_{1} (0) \rvert \rho^{n}
\int_{t_{0} - \rho^{2} - \tau}^{t_{0} - \tau}
 \varphi_{2} (s) \,
d s,
\end{align*}
where $\varphi_{2} (s) := s^{-n} \exp ( - c_{2} \rho^{2} / s )$ and
$c_{2} := 2 ( \sqrt{14} - 1 )^{2} c_{\ast}$.
In a similary way as the case (i),
if $\rho^{2} \leq t_{0} - \tau \leq c_{2} \rho^{2} / n$,
then we have
\begin{align*}
\int_{t_{0} - \rho^{2} - \tau}^{t_{0} - \tau}
 \varphi_{2} (s) \,
d s
& \leq \int_{t_{0} - \rho^{2} - \tau}^{t_{0} - \tau}
 \varphi_{2} ( t_{0} - \tau ) \,
d s
= \rho^{2} ( t_{0} - \tau )^{-n}
\exp \left( - \frac{c_{2} \rho^{2}}{t_{0} - \tau} \right) \\
& \leq ( t_{0} - \tau )^{-n+1}
\exp \left( - \frac{c_{2} \rho^{2}}{t_{0} - \tau} \right),
\end{align*}
because $\varphi_{2} (s) \leq \varphi ( t_{0} - \tau )$
for any $s \in [ t_{0} - \rho^{2} - \tau , \, t_{0} - \tau ]$,
and we have $\rho^{2} \leq t_{0} - \tau$.
On the other hand,
if $c_{2} \rho^{2} / n \leq t_{0} - \tau \leq 2 \rho^{2}$,
then we have
\begin{align*}
\int_{t_{0} - \rho^{2} - \tau}^{t_{0} - \tau}
 \varphi_{2} (s) \,
d s
& \leq \int_{t_{0} - \rho^{2} - \tau}^{t_{0} - \tau}
 \varphi_{2} \left( \frac{c_{2} \rho^{2}}{n} \right)
d s
= \left( \frac{n}{c_{2}} \right)^{n}
\rho^{-2n+2} \exp ( -n ) \\
& \leq 2^{n-1} \left( \frac{n}{c_{2}} \right)^{n}
( t_{0} - \tau )^{1-n}
\exp \left( - \frac{c_{2} \rho^{2}}{t_{0} - \tau} \right),
\end{align*}
where we used the properties that 
\begin{align*}
& \varphi_{2} (s) 
\leq \varphi_{2} \left( \frac{c_{2} \rho^{2}}{n} \right)
\mbox{ for any } t_{0} - \rho^{2} - \tau \leq s \leq t_{0} - \tau ; 
\displaybreak[1] \\
& n \geq \frac{c_{2} \rho^{2}}{t_{0} - \tau} \mbox{, and }
\rho^{2} \geq \frac{t_{0} - \tau}{2} . 
\end{align*}
Summing up, we have
\[
I_{0} \leq \lvert B_{1} (0) \rvert
\max \left\{ 1, \, 2^{n-1} \left( \frac{n}{c_{2}} \right)^{n} \right\}
\rho^{n} ( t_{0} - \tau )^{1-n}
\exp \left( - \frac{c_{2} \rho^{2}}{t_{0} - \tau} \right) .
\]
Finally we consider the case (iii).
We first remark that
\[
\int_{t_{0} - \rho^{2}}^{t_{0}} ( t - \tau )^{-n} \, d t
\leq 
\left\{
 \begin{aligned}
 & \frac{1}{n-1} ( t_{0} - \rho^{2} - \tau )^{-n+1}
 & & \mbox{if } n \geq 2, \\
 & \log 2 & & \mbox{if } n = 1,
 \end{aligned}
\right.
\]
because $t_{0} - \tau \leq 2 ( t_{0} - \rho^{2} - \tau )$.
In particular, we have
\[
\int_{t_{0} - \rho^{2}}^{t_{0}} ( t - \tau )^{-n} \, d t
\leq ( t_{0} - \rho^{2} - \tau )^{-n+1}
\leq 2^{n-1} ( t_{0} - \tau )^{-n+1} .
\]
Hence we have
\begin{align*}
I_{0}
& \leq \lvert B_{1} (0) \rvert \rho^{n}
\int_{t_{0} - \rho^{2}}^{t_{0}} ( t - \tau )^{-n} \, d t
\leq 2^{n-1} \lvert B_{1} (0) \rvert \rho^{n}
( t_{0} -  \tau )^{-n+1} \\
& \leq 2^{n-1} \exp (8) \lvert B_{1} (0) \rvert \rho^{n}
( t_{0} - \tau )^{-n+1}
\exp \left(
 - \frac{\lvert x_{0} - \xi \rvert^{2}}{t_{0} - \tau}
\right),
\end{align*}
because
\begin{math}
\lvert x_{0} - \xi \rvert^{2} / ( t_{0} - \tau)
\leq ( 4 \rho )^{2} / 2 \rho^{2} = 8
\end{math}.

Therefore we have the estimate (\ref{eq:I0}) in every case.
\end{proof}
Now we prove Theorem~\ref{theorem:gradientestimate}.
\begin{proof}[of Theorem~\ref{theorem:gradientestimate}]
Let $x_{0} , \xi \in \mathbb{R}^{n}$ and $t_{0} > \tau$.
Let 
\begin{math}
\rho := ( \lvert x_{0} - \xi \rvert^{2} + t_{0} - \tau )^{1/2} / 4
\leq 1 
\end{math}.
Then, by Corollary~\ref{corollary:1},
we have
\begin{align*}
& \lVert \nabla_{x} \Gamma ( \cdot , \cdot ; \xi , \tau ) \rVert_{
 L^{\infty} (
  B_{\rho / 2} ( x_{0} ) \times ( t_{0} - ( \rho / 2 )^{2} , \, t_{0} )
 )
} \\
& \leq \frac{C_{\sharp}^{\prime}}{\rho^{n/2 + 2}}
\lVert
 \Gamma ( \cdot , \cdot ; \xi , \tau )
\rVert_{L^{2}
 ( B_{\rho} ( x_{0} ) \times ( t_{0} - \rho^{2} , \, t_{0} ) )
},
\end{align*}
because we have
\[
\frac{\partial \Gamma}{\partial t} ( x, t; \xi , \tau )
- \sum_{i,j=1}^{n} \frac{\partial}{\partial x_{i}} \left(
 a_{i j} (x) \frac{\partial \Gamma}{\partial x_{j}}
 ( x, t; \xi , \tau )
\right) = 0 \mbox{ in }
B_{\rho} ( x_{0} ) \times ( t_{0} - \rho^{2} , t_{0} ] .
\]
By this estimate and Lemma~\ref{lemma:GammaL2a}, we have
\begin{align*}
& \lVert \nabla_{x} \Gamma ( \cdot , \cdot ; \xi , \tau ) \rVert_{
 L^{\infty} (
  B_{\rho / 2} ( x_{0} ) \times ( t_{0} - ( \rho / 2 )^{2} , \, t_{0} ]
 )
} \\
& \leq \frac{C_{\sharp}^{\prime}}{\rho^{n/2 + 2}}
\lVert
 \Gamma ( \cdot , \cdot ; \xi , \tau )
\rVert_{L^{2}
 ( B_{\rho} ( x_{0} ) \times ( t_{0} - \rho^{2} , \, t_{0} ] )
} \displaybreak[1] \\
& \leq \frac{C_{\sharp}^{\prime} C_{\ast}^{\prime}}{\rho^{2}}
\frac{1}{( t_{0} - \tau )^{(n-1)/2}}
\exp \left(
 - \frac{c_{\ast}^{\prime} \lvert x_{0} - \xi \rvert^{2}}
        {t_{0} - \tau}
\right) \displaybreak[1] \\
& \leq
\frac{16 C_{\sharp}^{\prime} C_{\ast}^{\prime}}
     {( t_{0} - \tau )^{(n+1)/2}}
\exp \left(
 - \frac{c_{\ast}^{\prime} \lvert x_{0} - \xi \rvert^{2}}
        {t_{0} - \tau}
\right),
\end{align*}
because we have
$\rho^{-2} \leq 16 ( t_{0} - \tau )^{-1}$.
Hence the proof is completed. 
\end{proof}
\renewcommand{\thesection}{Appendix}
\renewcommand{\theequation}{A.\arabic{equation}}
\renewcommand{\thetheorem}{A.\arabic{theorem}}
\section{}
In Appendix, we show the estimates
(\ref{eq:LinftyL2}) and (\ref{eq:Linfty})
in Lemma~\ref{lemma:keyestimate}
for the sake of completeness. 
To begin with,
we give some embedding lemma
which is necessary to show the estimates
(\ref{eq:LinftyL2}) and (\ref{eq:Linfty}).
First, the following Gagliardo-Nirenberg's inequality is well-known
(see \cite[p.\ 24, Theorem~9.3]{Friedman}, for example).
\begin{lemma}[Gagliardo-Nirenberg's inequality]%
\label{lemma:GagliardoNirenberg}%
Let
$r, s$
be any numbers satisfying
$1 \leq r, s \leq \infty$,
and let
$j, k$
be any integers satisfying
$0 \leq j < k$.
If $u$ is any function in
$W_{s}^{k} ( \mathbb{R}^{n} ) \cap L^{r} ( \mathbb{R}^{n} )$,
then
\begin{equation}\label{eq:GagliardoNirenberg}
\lVert D^{j} u \rVert_{L^{q}( \mathbb{R}^{n} )}
\leq C_{1} \lVert D^{k} u \rVert_{L^{s}( \mathbb{R}^{n} )}^{\gamma}
\lVert u \rVert_{L^{r}( \mathbb{R}^{n} )}^{1 - \gamma} ,
\end{equation}
where
\begin{equation}\label{eq:gamma}
\frac{1}{q} = \frac{j}{n} +
\gamma \left( \frac{1}{s} - \frac{k}{n} \right)
+ \frac{1 - \gamma}{r}
\end{equation}
for all $\gamma$ in the interval
\[
\frac{j}{k} \leq \gamma \leq 1,
\]
where a positive constant $C_{1}$
depends only on $n, k, j, r, s, \gamma$,
with the following exception:
If $k - j - n/s$ is a nonnegative integer,
then {\rm (\ref{eq:GagliardoNirenberg})} holds
only for $j/k \leq \gamma < 1$.
\end{lemma}

\medskip
Then, as an application of Lemma~\ref{lemma:GagliardoNirenberg}, we have the following embedding lemma.
\begin{lemma}[embedding lemma]\label{lemma:SobolevV2}
Let $H_{0}^{1} (D)$ be the usual $L^2$-Sobolev space with supports in $\overline D$ and 
\begin{math}
v \in L^{\infty} \bigl( 0, T; L^{2} (D) \bigr)
\cap L^{2} \bigl( 0, T; H_{0}^{1} (D) \bigr)
\end{math}.
Then $v \in L^{2(n+2)/n} (Q)$ holds.
Moreover, we have the estimate
\begin{align}
\lVert v \rVert_{L^{2(n+2)/n} (Q)}
& \leq C_{1}
\lVert v \rVert_{L^{\infty} ( 0, T; L^{2} (D) )}^{2/(n+2)}
\lVert \nabla v \rVert_{L^{2} (Q)}^{n/(n+2)} \notag \\
& \leq C_{1} \left(
 \lVert v \rVert_{L^{\infty} ( 0, T; L^{2} (D) )}
 + \lVert \nabla v \rVert_{L^{2} (Q)}
\right) , \label{eq:V2}
\end{align}
where a positive constant $C_{1}$ depends only on $n$,
and we denote $Q := D \times (0,T]$.
\end{lemma}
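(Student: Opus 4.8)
The plan is to obtain the bound on each time slice $v(\cdot,t)$ from the Gagliardo--Nirenberg inequality of Lemma~\ref{lemma:GagliardoNirenberg} and then integrate in $t$. First I would fix $t\in(0,T)$ for which $v(\cdot,t)\in H_0^1(D)$ and $\lVert v(\cdot,t)\rVert_{L^2(D)}\le\lVert v\rVert_{L^\infty(0,T;L^2(D))}$; both hold for a.e.\ $t$ by the hypotheses $v\in L^2(0,T;H_0^1(D))\cap L^\infty(0,T;L^2(D))$. Since $v(\cdot,t)\in H_0^1(D)$, its zero extension $\widetilde v(\cdot,t)$ to $\mathbb{R}^n$ lies in $W_2^1(\mathbb{R}^n)\cap L^2(\mathbb{R}^n)$ with $\lVert\widetilde v(\cdot,t)\rVert_{L^2(\mathbb{R}^n)}=\lVert v(\cdot,t)\rVert_{L^2(D)}$ and $\lVert\nabla\widetilde v(\cdot,t)\rVert_{L^2(\mathbb{R}^n)}=\lVert\nabla v(\cdot,t)\rVert_{L^2(D)}$; this is the point where the hypothesis $H_0^1(D)$ (rather than merely $H^1(D)$) is used, so that Lemma~\ref{lemma:GagliardoNirenberg}, which is stated on $\mathbb{R}^n$, applies.

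Next I would invoke Lemma~\ref{lemma:GagliardoNirenberg} with $j=0$, $k=1$, $r=s=2$ and $q=2(n+2)/n$. Solving \eqref{eq:gamma} for this data gives $\gamma=n/(n+2)$, which lies in $[0,1)$; the exceptional case of the lemma, where $k-j-n/s=1-n/2$ is a nonnegative integer (that is, $n=2$), causes no difficulty since there $\gamma<1$. This yields, for a.e.\ $t$,
\[
\lVert v(\cdot,t)\rVert_{L^{2(n+2)/n}(D)}
\le C_1\,\lVert\nabla v(\cdot,t)\rVert_{L^2(D)}^{n/(n+2)}\,
\lVert v(\cdot,t)\rVert_{L^2(D)}^{2/(n+2)}.
\]

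Then I would raise this inequality to the power $2(n+2)/n$, so that the exponent of $\lVert\nabla v(\cdot,t)\rVert_{L^2(D)}$ becomes exactly $2$ and that of $\lVert v(\cdot,t)\rVert_{L^2(D)}$ becomes $4/n$; bounding the latter factor by $\lVert v\rVert_{L^\infty(0,T;L^2(D))}^{4/n}$ and integrating over $t\in(0,T)$ gives
\[
\lVert v\rVert_{L^{2(n+2)/n}(Q)}^{2(n+2)/n}
\le C_1^{2(n+2)/n}\,\lVert v\rVert_{L^\infty(0,T;L^2(D))}^{4/n}\,
\lVert\nabla v\rVert_{L^2(Q)}^{2},
\]
which in particular shows $v\in L^{2(n+2)/n}(Q)$. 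Taking the $\bigl(2(n+2)/n\bigr)$-th root and simplifying the exponents (using $2\cdot\tfrac{n}{2(n+2)}=\tfrac{n}{n+2}$ and $\tfrac4n\cdot\tfrac{n}{2(n+2)}=\tfrac{2}{n+2}$) yields the first inequality in \eqref{eq:V2}; the second follows from the elementary bound $a^\theta b^{1-\theta}\le a+b$ for $a,b\ge0$ and $\theta=2/(n+2)\in[0,1]$.

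The argument is essentially routine. The only steps meriting care are the zero-extension reduction just described and the bookkeeping of exponents through the two displays; I do not anticipate a genuine obstacle.
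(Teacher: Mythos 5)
Your argument is correct and follows the same route as the paper: apply the Gagliardo--Nirenberg inequality with $q=2(n+2)/n$, $r=s=2$, $k=1$, $j=0$ to get $\gamma=n/(n+2)$ on each time slice, raise to the power $2(n+2)/n$, integrate in $t$, and finish with Young's inequality. You supply two details the paper leaves implicit — the zero-extension from $H_0^1(D)$ to $W_2^1(\mathbb{R}^n)$ needed to invoke the whole-space lemma, and the check that the exceptional case ($n=2$) is harmless since $\gamma<1$ — but the substance is identical.
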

\begin{proof}
We apply Lemma~\ref{lemma:GagliardoNirenberg}
with $q = 2(n+2)/n$, $r=2$, $s=2$, $k=1$ and $j=0$.
Then the equation (\ref{eq:gamma}) yields $\gamma = n/(n+2)$.
Hence we have
\[
\lVert v ( \cdot , t ) \rVert_{L^{2(n+2)/n} (D)}
\leq C_{1} \lVert \nabla v ( \cdot , t ) \rVert_{L^{2} (D)}^{n/(n+2)}
\lVert v ( \cdot , t ) \rVert_{L^{2} (D)}^{2/(n+2)} .
\]
Therefore we have
\begin{align*}
\lVert v \rVert_{L^{2(n+2)/n} (Q)}^{2(n+2)/n}
& = \int_{0}^{T}
 \lVert v ( \cdot , t ) \rVert_{L^{2(n+2)/n} (D)}^{2(n+2)/n} \,
d t \\
& \leq \int_{0}^{T}
 \left(
  C_{1} \lVert \nabla v ( \cdot , t ) \rVert_{L^{2} (D)}^{n/(n+2)}
  \lVert v ( \cdot , t ) \rVert_{L^{2} (D)}^{2/(n+2)}
 \right)^{2(n+2)/n} \,
d t \displaybreak[1] \\
& \leq C_{1}^{2(n+2)/n}
\lVert v \rVert_{L^{\infty} ( 0, T; L^{2} (D) )}^{4/n}
\lVert \nabla v \rVert_{L^{2} (Q)}^{2} .
\end{align*}
By this inequality and Young's inequality,
we have the estimate (\ref{eq:V2}).
\end{proof}
Based on Di~Giorgi's famous argument, we start to estimate solutions to the parabolic equation
(\ref{eq:parabolic}).
By testing $\max\{ u - k,0\}\zeta^{2}$ to (\ref{eq:parabolic}) we have the following lemma.
\begin{lemma}\label{lemma:DG}
Let $p > 2$. 
Let
\begin{math}
Q_{\rho} := B_{\rho} ( x_{0} ) \times ( t_{0} - \rho^{2} , t_{0} ]
\subset Q
\end{math}
and
\begin{math}
\zeta \in C^{\infty} \bigl(
 [ t_{0} - \rho^{2} , t_{0} ] ; C_{0}^{\infty} ( B_{\rho} ( x_{0} ) )
\bigr)
\end{math} 
satisfy
$0 \leq \zeta \leq 1$
and 
$\zeta ( \cdot , t_{0} - \rho^{2} ) = 0$.
Then a solution $u$ to the parabolic equation
{\rm (\ref{eq:parabolic})} satisfies
\begin{align}
& \lVert ( u - k )_{+} \zeta \rVert_{
 L^{\infty} ( t_{0} - \rho^{2} , t_{0} ; L^{2} ( B_{\rho} ( x_{0} ) ) )
}^{2}
+ \bigl\lVert
 \nabla \bigl( ( u - k )_{+} \zeta \bigr)
\bigr\rVert_{L^{2} ( Q_{\rho} )}^{2} \notag \\
& \leq C_{2} \Biggl[
 \left(
  \left\lVert
   \frac{\partial \zeta}{\partial t}
  \right\rVert_{L^{\infty} ( Q_{\rho} )}
  + \lVert \nabla \zeta \rVert_{L^{\infty} ( Q_{\rho} )}^{2}
 \right) \lVert ( u - k )_{+} \rVert_{L^{2} ( Q_{\rho} )}^{2}
 \notag \\
 & \hspace*{30ex} \mbox{}
 + F_{0, \rho}^{2}
 \bigl\lvert
  Q_{\rho}  \cap \{ u(x,t) > k \}
 \bigr\rvert^{1 - 2 / p}
\Biggr] \label{eq:DG}
\end{align}
for any $k \in \mathbb{R}$,
where $v_{+} (x) := \max \{ v(x), 0 \}$,
\begin{equation}\label{eq:F0rho}
F_{0, r} := \lVert f \rVert_{L^{\frac{p(n+2)}{n+2+p}} ( Q_{r} )}
+ \sum_{i=1}^{n} \lVert f_{i} \rVert_{L^{p} ( Q_{r} )}
\mbox{ for } r>0
\end{equation}
and $C_{2} > 0$ depends only on $n, \Lambda$ and $\lambda$.
\end{lemma}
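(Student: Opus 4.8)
The plan is to carry out the standard De Giorgi energy estimate for the truncated function $(u-k)_+$. I would test the weak formulation \eqref{eq:weaksolution} of \eqref{eq:parabolic} against $\varphi=(u-k)_+\zeta^2$; since $u$ only belongs to $V_2^{1,0}(Q)$ this is not literally admissible, so as in Remark~\ref{remark:steklov} I would first work with the Steklov average $u_h$, for which all manipulations are legitimate, and then let $h\to0$, omitting this routine point. Fixing $t_1\in(t_0-\rho^2,t_0]$ and integrating over $B_\rho(x_0)\times(t_0-\rho^2,t_1)$, the time-derivative term yields $\tfrac12\int_{B_\rho(x_0)}\bigl((u-k)_+^2\zeta^2\bigr)(x,t_1)\,dx-\iint(u-k)_+^2\zeta\,\partial_t\zeta$ (the lower endpoint drops out because $\zeta(\cdot,t_0-\rho^2)=0$); the principal part yields $\iint\sum a_{ij}\partial_iu\,\partial_ju\,\zeta^2\chi_{\{u>k\}}\ge\lambda\iint|\nabla(u-k)_+|^2\zeta^2$ together with the cross term $2\iint\sum a_{ij}\,\partial_j(u-k)_+\,(u-k)_+\,\zeta\,\partial_i\zeta$; and the right-hand side of \eqref{eq:parabolic} yields $\iint f(u-k)_+\zeta^2$ and $\sum_i\iint\bigl[f_i\,\partial_i(u-k)_+\,\zeta^2\chi_{\{u>k\}}+2f_i(u-k)_+\zeta\,\partial_i\zeta\bigr]$.

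Next I would estimate the right-hand terms. The $\partial_t\zeta$ term, the $(u-k)_+|\nabla\zeta|^2$ part of the cross term (obtained via Young's inequality after noting $\partial_j u\,(u-k)_+=\partial_j(u-k)_+\,(u-k)_+$), and the $(u-k)_+^2|\nabla\zeta|^2$ part of the last forcing term are all bounded by $\bigl(\|\partial_t\zeta\|_{L^\infty(Q_\rho)}+\|\nabla\zeta\|_{L^\infty(Q_\rho)}^2\bigr)\|(u-k)_+\|_{L^2(Q_\rho)}^2$, which is the first group on the right of \eqref{eq:DG}. The remaining pieces of the cross term and of the $f_i\,\partial_i(u-k)_+$ term absorb, via Young, a small multiple of $\iint|\nabla(u-k)_+|^2\zeta^2$, leaving $\iint|f_i|^2\chi_{\{u>k\}}\le\|f_i\|_{L^p(Q_\rho)}^2\,|Q_\rho\cap\{u>k\}|^{1-2/p}$ by Hölder with exponents $p/2$ and $p/(p-2)$. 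For the $\iint f(u-k)_+\zeta^2$ term I would apply Hölder with the three exponents $\tfrac{p(n+2)}{n+2+p}$, $\tfrac{2(n+2)}{n}$, and the conjugate of the other two, which because $\tfrac{n+2+p}{p(n+2)}+\tfrac{n}{2(n+2)}=\tfrac12+\tfrac1p$ produces a factor $|Q_\rho\cap\{u>k\}|^{1/2-1/p}$; then Lemma~\ref{lemma:SobolevV2} applied to $(u-k)_+\zeta$ bounds $\|(u-k)_+\zeta\|_{L^{2(n+2)/n}(Q_\rho)}$ by $\|(u-k)_+\zeta\|_{L^\infty(t_0-\rho^2,t_0;L^2)}+\|\nabla((u-k)_+\zeta)\|_{L^2(Q_\rho)}$, and Young's inequality then leaves $\|f\|_{L^{p(n+2)/(n+2+p)}(Q_\rho)}^2\,|Q_\rho\cap\{u>k\}|^{1-2/p}$ plus a small multiple of the squared embedding bound.

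Finally, using $\nabla((u-k)_+\zeta)=\zeta\nabla(u-k)_++(u-k)_+\nabla\zeta$ to bound $\|\nabla((u-k)_+\zeta)\|_{L^2(Q_\rho)}^2$ by $2\iint|\nabla(u-k)_+|^2\zeta^2$ plus a term of the first type, I would take the supremum over $t_1\in(t_0-\rho^2,t_0]$ in the time-slice term (to produce the $L^\infty(L^2)$ norm) and the choice $t_1=t_0$ in the gradient term, add the two inequalities, and then absorb the small multiples of $\|(u-k)_+\zeta\|_{L^\infty(L^2)}^2+\|\nabla((u-k)_+\zeta)\|_{L^2}^2$ into the left; this gives \eqref{eq:DG} with $C_2$ depending only on $n$, $\lambda$, $\Lambda$ (through the ellipticity bounds and the Gagliardo--Nirenberg constant $C_1$ of Lemma~\ref{lemma:SobolevV2}). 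The point that needs care is the order of operations around the embedding inequality: the small parameters in Young's inequality must be chosen only after rewriting everything in terms of $\|(u-k)_+\zeta\|_{L^\infty(L^2)}^2+\|\nabla((u-k)_+\zeta)\|_{L^2}^2$, so that these quantities genuinely appear on the left and can be absorbed; checking the exponent identity $\tfrac{n+2+p}{p(n+2)}+\tfrac{n}{2(n+2)}=\tfrac12+\tfrac1p$, which is exactly what makes the leftover measure power equal to $1-2/p$ after squaring, is the only nontrivial arithmetic.
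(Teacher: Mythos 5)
Your proposal is correct and follows essentially the same route as the paper: test the equation (after Steklov averaging) with $(u-k)_+\zeta^2$ over $B_\rho(x_0)\times(t_0-\rho^2,t')$, estimate the $f_i$ contributions by Young and H\"older with exponent $p/2$, estimate the $f$ contribution by H\"older (using $\tfrac{n+2+p}{p(n+2)}+\tfrac{n}{2(n+2)}=\tfrac12+\tfrac1p$), the embedding Lemma~\ref{lemma:SobolevV2} for $(u-k)_+\zeta$ and Young, then take the supremum over $t'$ and absorb. The only differences from the paper's writeup are cosmetic: you keep the cross term $2\sum a_{ij}\,\partial_j(u-k)_+(u-k)_+\zeta\,\partial_i\zeta$ explicit and Young it away rather than completing the square to $\sum a_{ij}\,\partial_j((u-k)_+\zeta)\,\partial_i((u-k)_+\zeta)-\sum a_{ij}(u-k)_+^2\partial_j\zeta\,\partial_i\zeta$ at the outset, you use a single three-factor H\"older for the $f$-term where the paper uses two successive two-factor H\"olders (first with $\tfrac{2(n+2)}{n+4}$ and $\tfrac{2(n+2)}{n}$, then passing from $\tfrac{2(n+2)}{n+4}$ to $\tfrac{p(n+2)}{n+2+p}$ on the set $\{u>k\}$), and you add the $L^\infty(L^2)$ and gradient inequalities where the paper takes a $\max$; all of these are equivalent up to constants. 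Your flagged subtlety about choosing the Young parameters only after everything on the right is expressed in terms of $\lVert(u-k)_+\zeta\rVert_{L^\infty(L^2)}^2+\lVert\nabla((u-k)_+\zeta)\rVert_{L^2}^2$ is exactly the point the paper handles by first taking the supremum over $t'$ to form its display (\ref{eq:proofDG1}) and only then invoking the embedding in (\ref{eq:proofDG3}).
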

\begin{proof}
Multiplying (\ref{eq:parabolic}) by
$( u - k )_{+} \zeta^{2}$
and integrating it over
\begin{math}
Q_{\rho}^{\prime}
:= B_{\rho} ( x_{0} ) \times ( t_{0} - \rho^{2} , t^{\prime} )
\end{math}
(also see Remark~\ref{remark:steklov}),
we have
\begin{align*}
\mbox{(LHS)}
& = \iint_{Q_{\rho}^{\prime}}
 \left( \frac{\partial}{\partial t} ( u - k )_{+} \right)
 ( u - k )_{+} \zeta^{2} \,
d x \, d t \\
& \hspace*{3ex} \mbox{}
- \sum_{i,j=1}^{n}
\iint_{Q_{\rho}^{\prime}}
 \frac{\partial}{\partial x_{i}} \left(
  a_{i j} \frac{\partial}{\partial x_{j}} ( u - k )_{+}
 \right)
 ( u - k )_{+} \zeta^{2} \,
d x \, d t \displaybreak[1] \\
& = \frac{1}{2}
\iint_{Q_{\rho}^{\prime}}
 \left( \frac{\partial}{\partial t} ( u - k )_{+}^{2} \right)
 \zeta^{2} \,
d x \, d t \\
& \hspace*{3ex} \mbox{}
+ \sum_{i,j=1}^{n}
\iint_{Q_{\rho}^{\prime}}
 a_{i j} \frac{\partial}{\partial x_{j}} ( u - k )_{+}
 \frac{\partial}{\partial x_{i}} \bigl( ( u - k )_{+} \zeta^{2} \bigr)
 \,
d x \, d t \displaybreak[1] \\
& = \frac{1}{2}
\iint_{Q_{\rho}^{\prime}}
 \left[
  \frac{\partial}{\partial t} \bigl(
   ( u - k )_{+}^{2} \zeta^{2}
  \bigr)
  - 2 ( u - k )_{+}^{2} \zeta \frac{\partial \zeta}{\partial t}
 \right]
d x \, d t \\
& \hspace*{5ex} \mbox{}
+ \sum_{i,j=1}^{n}
\iint_{Q_{\rho}^{\prime}}
 a_{i j}
 \frac{\partial}{\partial x_{j}} \bigl( ( u - k )_{+} \zeta \bigr)
 \frac{\partial}{\partial x_{i}} \bigl( ( u - k )_{+} \zeta \bigr) \,
d x \, d t \\
& \hspace*{5ex} \mbox{}
- \sum_{i,j=1}^{n}
\iint_{Q_{\rho}^{\prime}}
 a_{i j} ( u - k )_{+}^{2} \frac{\partial \zeta}{\partial x_{j}}
 \frac{\partial \zeta}{\partial x_{i}} \,
d x \, d t \displaybreak[1] \\
& =
\frac{1}{2} 
\int_{B_{\rho} ( x_{0} )}
 ( u - k )_{+}^{2} \zeta^{2} \,
d x \biggr|_{t = t^{\prime}} 
- \iint_{Q_{\rho}^{\prime}}
 ( u - k )_{+}^{2} \zeta \frac{\partial \zeta}{\partial t} \,
d x \, d t \\
& \hspace*{5ex} \mbox{}
+ \sum_{i,j=1}^{n}
\iint_{Q_{\rho}^{\prime}}
 a_{i j}
 \frac{\partial}{\partial x_{j}} \bigl( ( u - k )_{+} \zeta \bigr)
 \frac{\partial}{\partial x_{i}} \bigl( ( u - k )_{+} \zeta \bigr) \,
d x \, d t \\
& \hspace*{5ex} \mbox{}
- \sum_{i,j=1}^{n}
\iint_{Q_{\rho}^{\prime}}
 a_{i j} ( u - k )_{+}^{2} \frac{\partial \zeta}{\partial x_{j}}
 \frac{\partial \zeta}{\partial x_{i}} \,
d x \, d t .
\end{align*}
Hence we have
\begin{align}
& \frac{1}{2} \int_{B_{\rho} ( x_{0} )}
 ( u - k )_{+}^{2} \zeta^{2} \,
d x \biggr|_{t = t^{\prime}} \notag \\
& \mbox{}
+ \sum_{i,j=1}^{n}
\iint_{Q_{\rho}^{\prime}}
 a_{i j}
 \frac{\partial}{\partial x_{j}} \bigl( ( u - k )_{+} \zeta \bigr)
 \frac{\partial}{\partial x_{i}} \bigl( ( u - k )_{+} \zeta \bigr) \,
d x \, d t \notag \\
& = \iint_{Q_{\rho}^{\prime}}
 ( u - k )_{+}^{2} \zeta \frac{\partial \zeta}{\partial t} \,
d x \, d t 
+ \sum_{i,j=1}^{n}
\iint_{Q_{\rho}^{\prime}}
 a_{i j} ( u - k )_{+}^{2} \frac{\partial \zeta}{\partial x_{j}}
 \frac{\partial \zeta}{\partial x_{i}} \,
d x \, d t \notag \\
& \hspace*{5ex} \mbox{}
+ \iint_{Q_{\rho}^{\prime}}
 f ( u - k )_{+} \zeta^{2} \,
d x \, d t + \sum_{i=1}^{n}
\iint_{Q_{\rho}^{\prime}}
 f_{i}
 \frac{\partial}{\partial x_{i}} \bigl( ( u - k )_{+} \zeta^{2} \bigr)
 \,
d x \, d t . \label{eq:identity1}
\end{align}
We remark that
\begin{align*}
& \left\lvert
 \iint_{Q_{\rho}^{\prime}}
  f_{i}
  \frac{\partial}{\partial x_{i}} \bigl( ( u - k )_{+} \zeta^{2} \bigr)
  \,
 d x \, d t
\right\rvert \\
& = \Biggl\lvert
 \iint_{Q_{\rho}^{\prime} \cap \{ u(x,t) > k \}}
  f_{i} \zeta
  \frac{\partial}{\partial x_{i}} \bigl( ( u - k )_{+} \zeta \bigr) \,
 d x \, d t \\
 & \hspace*{5ex} \mbox{}
 + \iint_{Q_{\rho}^{\prime} \cap \{ u(x,t) > k \}}
  f_{i} ( u - k )_{+} \zeta
  \frac{\partial \zeta}{\partial x_{i}} \,
 d x \, d t
\Biggr\rvert \displaybreak[1] \\
& \leq \varepsilon_{1}
\iint_{Q_{\rho}^{\prime} \cap \{ u(x,t) > k \}}
 \left\lvert
  \frac{\partial}{\partial x_{i}} \bigl( ( u - k )_{+} \zeta \bigr)
 \right\rvert^{2} \,
d x \, d t \\
& \hspace*{5ex} \mbox{}
+ \frac{1}{\varepsilon_{1}}
\iint_{Q_{\rho}^{\prime} \cap \{ u(x,t) > k \}}
 \lvert f_{i} \zeta \rvert^{2} \, 
d x \, d t \\
& \hspace*{5ex} \mbox{}
+ \iint_{Q_{\rho}^{\prime} \cap \{ u(x,t) > k \}}
 \lvert f_{i} \zeta \rvert^{2} \, 
d x \, d t
+ \iint_{Q_{\rho}^{\prime} \cap \{ u(x,t) > k \}}
 ( u - k )_{+}^{2}
 \left\lvert \frac{\partial \zeta}{\partial x_{i}} \right\rvert^{2} \,
d x \, d t .
\end{align*}
Hence, by (\ref{eq:aijxiixij}) and (\ref{eq:identity1}), we have
\begin{align*}
& \frac{1}{2} \int_{B_{\rho} ( x_{0} )}
 ( u - k )_{+}^{2} \zeta^{2} \,
d x \biggr|_{t = t^{\prime}}
+ \lambda
\iint_{Q_{\rho}^{\prime}}
 \bigl\lvert \nabla \bigl( ( u - k )_{+} \zeta \bigr) \bigr\rvert^{2} \,
d x \, d t \\
& \leq \frac{1}{2} \int_{B_{\rho} ( x_{0} )}
 ( u - k )_{+}^{2} \zeta^{2} \,
d x \biggr|_{t = t^{\prime}} \\
& \hspace*{5ex} \mbox{}
+ \sum_{i,j=1}^{n}
\iint_{Q_{\rho}^{\prime}}
 a_{i j}
 \frac{\partial}{\partial x_{j}} \bigl( ( u - k )_{+} \zeta \bigr)
 \frac{\partial}{\partial x_{i}} \bigl( ( u - k )_{+} \zeta \bigr) \,
d x \, d t \displaybreak[1] \\
& = \iint_{Q_{\rho}^{\prime}}
 ( u - k )_{+}^{2} \zeta \frac{\partial \zeta}{\partial t} \,
d x \, d t 
+ \sum_{i,j=1}^{n}
\iint_{Q_{\rho}^{\prime}}
 a_{i j} ( u - k )_{+}^{2} \frac{\partial \zeta}{\partial x_{j}}
 \frac{\partial \zeta}{\partial x_{i}} \,
d x \, d t \\
& \hspace*{5ex} \mbox{}
+ \iint_{Q_{\rho}^{\prime}}
 f ( u - k )_{+} \zeta^{2} \,
d x \, d t + \sum_{i=1}^{n}
\iint_{Q_{\rho}^{\prime}}
 f_{i}
 \frac{\partial}{\partial x_{i}} \bigl( ( u - k )_{+} \zeta^{2} \bigr)
 \,
d x \, d t \displaybreak[1] \\
& \leq \left\lVert
 \frac{\partial \zeta}{\partial t}
\right\rVert_{L^{\infty} ( Q_{\rho} )}
\iint_{Q_{\rho}^{\prime}}
 ( u - k )_{+}^{2} \,
d x \, d t 
+ \Lambda \lVert \nabla \zeta \rVert_{L^{\infty} ( Q_{\rho} )}^{2}
\iint_{Q_{\rho}^{\prime}}
 ( u - k )_{+}^{2} \,
d x \, d t \\
& \hspace*{5ex} \mbox{}
+ \iint_{Q_{\rho}^{\prime}}
 f ( u - k )_{+} \zeta^{2} \,
d x \, d t 
+ \varepsilon_{1}
\iint_{Q_{\rho}^{\prime}}
 \left\lvert
  \nabla \bigl( ( u - k )_{+} \zeta \bigr)
 \right\rvert^{2} \,
d x \, d t \\
& \hspace*{5ex} \mbox{}
+ \left( \frac{1}{\varepsilon_{1}} + 1 \right)
\iint_{Q_{\rho}^{\prime} \cap \{ u(x,t) > k \}}
 \sum_{i=1}^{n} \lvert f_{i} \rvert^{2} \, 
d x \, d t \\
& \hspace*{5ex} \mbox{}
+ n \lVert \nabla \zeta \rVert_{L^{\infty} ( Q_{\rho} )}^{2}
\iint_{Q_{\rho}^{\prime}}
 ( u - k )_{+}^{2} \,
d x \, d t ,
\end{align*}
that is,
\begin{align*}
& \frac{1}{2} \int_{B_{\rho} ( x_{0} )}
 ( u - k )_{+}^{2} \zeta^{2} \,
d x \biggr|_{t = t^{\prime}}
+ ( \lambda - \varepsilon_{1} )
\iint_{Q_{\rho}^{\prime}}
 \bigl\lvert \nabla \bigl( ( u - k )_{+} \zeta \bigr) \bigr\rvert^{2} \,
d x \, d t \\
& \leq ( \Lambda + n ) \left(
 \left\lVert
  \frac{\partial \zeta}{\partial t}
 \right\rVert_{L^{\infty} ( Q_{\rho} )}
 + \lVert \nabla \zeta \rVert_{L^{\infty} ( Q_{\rho} )}^{2}
\right)
\iint_{Q_{\rho}^{\prime}}
 ( u - k )_{+}^{2} \,
d x \, d t \\
& \hspace*{5ex} \mbox{}
+ \left( \frac{1}{\varepsilon_{1}} + 1 \right)
\iint_{Q_{\rho}^{\prime} \cap \{ u(x,t) > k \}}
 \sum_{i=1}^{n} \lvert f_{i} \rvert^{2} \, 
d x \, d t
+ \iint_{Q_{\rho}^{\prime}}
 f ( u - k )_{+} \zeta^{2} \,
d x \, d t.
\end{align*}
Taking the supremum of the inequality over 
$( t_{0} - \rho^{2} , t_{0} ]$
with respect to $t^{\prime}$, we have
\begin{align}
& \max \left\{
 \frac{1}{2}
 \lVert ( u - k )_{+} \zeta \rVert_{
  L^{\infty} ( t_{0} - \rho^{2} , t_{0} ; L^{2} ( B_{\rho} ( x_{0} ) ) )
 }^{2} , \
 ( \lambda - \varepsilon_{1} )
 \bigl\lVert
  \nabla \bigl( ( u - k )_{+} \zeta \bigr)
 \bigr\rVert_{L^{2} ( Q_{\rho} )}^{2}
\right\} \notag \\
& \leq 
( \Lambda + n ) \left(
 \left\lVert
  \frac{\partial \zeta}{\partial t}
 \right\rVert_{L^{\infty} ( Q_{\rho} )}
 + \lVert \nabla \zeta \rVert_{L^{\infty} ( Q_{\rho} )}^{2}
\right)
\iint_{Q_{\rho}}
 ( u - k )_{+}^{2} \,
d x \, d t \notag \\
& \hspace*{5ex} \mbox{}
+ \left( \frac{1}{\varepsilon_{1}} + 1 \right)
\iint_{Q_{\rho} \cap \{ u(x,t) > k \}}
 \sum_{i=1}^{n} \lvert f_{i} \rvert^{2} \,
d x \, d t
+ \iint_{Q_{\rho}}
 f ( u - k )_{+} \zeta^{2} \,
d x \, d t. \label{eq:proofDG1}
\end{align}
Now we estimate
the last two terms in the right-hand side of (\ref{eq:proofDG1}).
First we obtain
\begin{equation}\label{eq:proofDG2}
\iint_{Q_{\rho} \cap \{ u(x,t) > k \}}
 \lvert f_{i} \rvert^{2} \,
d x \, d t
\leq \bigl\lvert
 Q_{\rho} \cap \{ u(x,t) > k \}
\bigr\rvert^{1 - 2 / p}
\lVert f_{i} \rVert_{L^{p} ( Q_{\rho} )}^{2}
\end{equation}
by H{\"o}lder's inequality.
Now we estimate
$\iint_{Q_{\rho}} f ( u - k )_{+} \zeta^{2} \, d x \, d t$.
We first recall
\begin{align*}
& \lVert ( u - k )_{+} \zeta \rVert_{L^{2(n+2)/n} ( Q_{\rho} )} \\
& \leq C_{1} \left(
 \lVert ( u - k )_{+} \zeta \rVert_{
  L^{\infty} ( t_{0} - \rho^{2} , t_{0} ; L^{2} ( B_{\rho} ( x_{0} ) ) )
 }
 + \bigl\lVert
  \nabla \bigl( ( u - k )_{+} \zeta \bigr)
 \bigr\rVert_{L^{2} ( Q_{\rho} )}
\right)
\end{align*}
by Lemma~\ref{lemma:SobolevV2},
where $C_{1} > 0$ depends only on $n$.
Then, by this inequality, 
H{\"o}lder's inequality and Young's inequality,
we have
\begin{align}
& \iint_{Q_{\rho}} f ( u - k )_{+} \zeta^{2} \, d x \, d t
\notag \\
& \leq
\lVert f \zeta \rVert_{
 L^{2(n+2)/(n+4)} ( Q_{\rho} \cap \{ u(x,t) > k \} )
}
\lVert ( u - k )_{+} \zeta \rVert_{L^{2(n+2)/n} ( Q_{\rho} )} 
\notag \displaybreak[1] \\
& \leq \varepsilon_{2}
\lVert ( u - k )_{+} \zeta \rVert_{L^{2(n+2)/n} ( Q_{\rho} )}^{2}
+ \frac{1}{\varepsilon_{2}}
\lVert f \zeta \rVert_{
 L^{2(n+2)/(n+4)} ( Q_{\rho} \cap \{ u(x,t) > k \} )
}^{2} \notag \displaybreak[1] \\
& \leq 2 \varepsilon_{2} C_{1}^{2}
\max \left\{
 \lVert ( u - k )_{+} \zeta \rVert_{
  L^{\infty} ( t_{0} - \rho^{2} , t_{0} ; L^{2} ( B_{\rho} ( x_{0} ) ) )
 }^{2} , \
 \bigl\lVert
  \nabla \bigl( ( u - k )_{+} \zeta \bigr)
 \bigr\rVert_{L^{2} ( Q_{\rho} )}^{2}
\right\} \notag \\
& \hspace*{5ex} \mbox{}
+ \frac{1}{\varepsilon_{2}}
\lVert f \rVert_{
 L^{\frac{p(n+2)}{n+2+p}} ( Q_{\rho} )
}^{2}
\bigl\lvert
 Q_{\rho} \cap \{ u(x,t) > k \}
\bigr\rvert^{1 - 2 / p} \label{eq:proofDG3}
\end{align}
because 
$2(n+2)/(n+4) < p(n+2)/(n+2+p)$. 
By (\ref{eq:proofDG1}), (\ref{eq:proofDG2}) and (\ref{eq:proofDG3}),
we obtain the estimate (\ref{eq:DG}). 
\end{proof}

By the same argument, we obtain the following lemma for $v_{-} (x) := \max \{ - v(x), 0 \}$.

\renewcommand{\thelemmaprime}%
             {\mathversion{bold}\ref{lemma:DG}$^{\prime}$}
\begin{lemmaprime}
Under the same assumption as in Lemma~\ref{lemma:DG},
a solution $u$ to the parabolic equation
{\rm (\ref{eq:parabolic})} satisfies
\begin{align}
& \lVert ( u - k )_{-} \zeta \rVert_{
 L^{\infty} ( t_{0} - \rho^{2} , t_{0} ; L^{2} ( B_{\rho} ( x_{0} ) ) )
}^{2}
+ \bigl\lVert
 \nabla \bigl( ( u - k )_{-} \zeta \bigr)
\bigr\rVert_{L^{2} ( Q_{\rho} )}^{2} \notag \\
& \leq C_{2} \Biggl[
 \left(
  \left\lVert
   \frac{\partial \zeta}{\partial t}
  \right\rVert_{L^{\infty} ( Q_{\rho} )} 
  + \lVert \nabla \zeta \rVert_{L^{\infty} ( Q_{\rho} )}^{2}
 \right) \lVert ( u - k )_{-} \rVert_{L^{2} ( Q_{\rho} )}^{2} \notag \\
 & \hspace*{10ex} \mbox{}
 + F_{0, \rho}^{2}
 \bigl\lvert
  Q_{\rho}  \cap \{ u(x,t) < k \}
 \bigr\rvert^{1 - 2 / p}
\Biggr] \label{eq:DGprime}
\end{align}
for any $k \in \mathbb{R}$,
where we define $F_{0, \rho}$ as {\rm (\ref{eq:F0rho})},
and $C_{2} > 0$ depends only on $n, \Lambda$ and $\lambda$.
\end{lemmaprime}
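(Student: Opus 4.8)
The plan is to derive Lemma~\ref{lemma:DG}$^{\prime}$ from Lemma~\ref{lemma:DG} by exploiting the sign symmetry $u \mapsto -u$ of the equation~(\ref{eq:parabolic}); this avoids repeating the De~Giorgi--type computation. (Alternatively, one could rerun that computation word for word, this time testing~(\ref{eq:parabolic}) against $-(u-k)_{-}\zeta^{2}$ and tracking signs, but the reflection argument is shorter.)

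First I would note that if $u \in V_{2}^{1,0}(Q)$ is a weak solution of~(\ref{eq:parabolic}) with data $f$ and $f_{i}$, then $w := -u$ is a weak solution of~(\ref{eq:parabolic}) with $f$ and $f_{i}$ replaced by $-f$ and $-f_{i}$; indeed $\partial w/\partial t - \sum_{i,j=1}^{n}\partial_{x_{i}}(a_{ij}\partial_{x_{j}}w) = -f + \sum_{i=1}^{n}\partial_{x_{i}}f_{i} = (-f) - \sum_{i=1}^{n}\partial_{x_{i}}(-f_{i})$. All the structural hypotheses are preserved: the matrix $(a_{ij})$ and the bound~(\ref{eq:aijxiixij}) are untouched, the cut-off $\zeta$ is the same, $w$ has the same regularity, and $-f_{i}$, $-\partial f_{i}/\partial t$ have the same integrability as $f_{i}$, $\partial f_{i}/\partial t$. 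Hence Lemma~\ref{lemma:DG} applies to $w$ at any real level; I would use it at level $k' := -k$.

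Next I would record the elementary identities $(u-k)_{-} = \max\{k-u,0\} = (w-k')_{+}$, $\nabla\bigl((u-k)_{-}\zeta\bigr) = \nabla\bigl((w-k')_{+}\zeta\bigr)$, $Q_{\rho}\cap\{u(x,t)<k\} = Q_{\rho}\cap\{w(x,t)>k'\}$, and $\lVert(u-k)_{-}\rVert_{L^{2}(Q_{\rho})} = \lVert(w-k')_{+}\rVert_{L^{2}(Q_{\rho})}$. With these, the left-hand side of the claimed estimate~(\ref{eq:DGprime}) is exactly the left-hand side of~(\ref{eq:DG}) written for $w$ at level $k'$, and the first term on the right of~(\ref{eq:DGprime}) matches the corresponding term for $w$. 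For the remaining term I would observe that $F_{0,\rho}$ in~(\ref{eq:F0rho}) depends on $f$ and $f_{i}$ only through $\lVert f\rVert_{L^{p(n+2)/(n+2+p)}(Q_{\rho})}$ and $\lVert f_{i}\rVert_{L^{p}(Q_{\rho})}$, which are invariant under $f\mapsto -f$, $f_{i}\mapsto -f_{i}$; hence the right-hand side of~(\ref{eq:DG}) for $w$ at level $k'$ coincides with the right-hand side of~(\ref{eq:DGprime}). Substituting back yields~(\ref{eq:DGprime}) with the same constant $C_{2} = C_{2}(n,\lambda,\Lambda)$.

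I do not anticipate a genuine obstacle: the only delicate point inside the proof of Lemma~\ref{lemma:DG} is the admissibility of the test function, but since we are \emph{invoking} that lemma for $w$ rather than re-proving it, this is already accounted for (and, should one prefer the direct derivation, it is handled by the Steklov-mean argument of Remark~\ref{remark:steklov}). Thus the whole proof reduces to the sign bookkeeping above.
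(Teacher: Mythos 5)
Your proposal is correct. The paper itself offers no written proof of Lemma~\ref{lemma:DG}$^{\prime}$ beyond the sentence ``By the same argument, we obtain the following lemma for $v_{-}$,'' which implicitly means re-running the De~Giorgi computation with the test function $-(u-k)_{-}\zeta^{2}$ and tracking signs. Your reflection argument $u\mapsto w=-u$, $k\mapsto k'=-k$, with the identities $(u-k)_{-}=(w-k')_{+}$ and $\{u<k\}=\{w>k'\}$, achieves the same conclusion by invoking Lemma~\ref{lemma:DG} as a black box; this is a genuinely cleaner route because it eliminates the possibility of sign bookkeeping errors and makes explicit that the constant $C_{2}$ and the admissibility of the test function carry over unchanged (since $(a_{ij})$, $\zeta$, and the integrability of the data are invariant under $f\mapsto -f$, $f_{i}\mapsto -f_{i}$). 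The observation that $F_{0,\rho}$ is built from norms and hence insensitive to the sign change is the one small point that must be checked, and you checked it. Nothing is missing.
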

The estimate (\ref{eq:LinftyL2}) easily follows from Lemmas~\ref{lemma:DG} and \ref{lemma:DG}$^{\prime}$.
Our next task is to prove the estimate (\ref{eq:Linfty}).
We start by giving a technical lemma which will be used later.
\begin{lemma}\label{lemma:ym}
Let $\widetilde{C} > 0$, $b > 1$ and $\varepsilon > 0$.
If a sequence $\{ y_{m} \}_{m=0}^{\infty}$ satisfies
\begin{equation}\label{eq:ymcond}
y_{0} \leq \theta_{0}
:= \widetilde{C}^{- 1 / \varepsilon} b^{- 1 / \varepsilon^{2}}
\mbox{ and }
0 \leq y_{m+1} \leq \widetilde{C} b^{m} y_{m}^{1 + \varepsilon} ,
\end{equation}
then
\[
\lim_{m \rightarrow \infty} y_{m} = 0
\]
holds.
\end{lemma}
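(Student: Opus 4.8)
The plan is to establish, by induction on $m$, the explicit decay estimate
\[
y_m \le \theta_0\, b^{-m/\varepsilon} \qquad (m \ge 0).
\]
Once this is in hand the conclusion is immediate: since $b > 1$ we have $b^{-m/\varepsilon} \to 0$ as $m \to \infty$, and therefore $0 \le y_m \le \theta_0 b^{-m/\varepsilon} \to 0$.

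First I would check the base case $m = 0$, for which the claimed bound reads $y_0 \le \theta_0$; this is precisely the first hypothesis in \eqref{eq:ymcond}. For the inductive step, I would assume $y_m \le \theta_0 b^{-m/\varepsilon}$ and insert this into the recursion $y_{m+1} \le \widetilde{C} b^m y_m^{1+\varepsilon}$, obtaining
\[
y_{m+1} \le \widetilde{C}\, b^m\, \theta_0^{1+\varepsilon}\, b^{-m(1+\varepsilon)/\varepsilon}.
\]
The decisive observation is that $\theta_0$ is defined exactly so that $\widetilde{C}\,\theta_0^{\varepsilon} = b^{-1/\varepsilon}$; writing $\theta_0^{1+\varepsilon} = \theta_0 \cdot \theta_0^{\varepsilon}$ and using this identity, the factor $\widetilde{C}$ cancels and the exponent of $b$ becomes
\[
m - \frac{1}{\varepsilon} - \frac{m(1+\varepsilon)}{\varepsilon} = -\frac{m+1}{\varepsilon},
\]
so that $y_{m+1} \le \theta_0 b^{-(m+1)/\varepsilon}$, which closes the induction.

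I do not expect any genuine obstacle here: the only thing requiring care is the bookkeeping of the exponents of $b$ and ensuring the normalization defining $\theta_0$ is invoked in exactly the right place. This is the classical De Giorgi-type fast geometric convergence lemma (see, e.g., \cite{LSU}), and it will be applied in the Appendix with $y_m$ taken to be a suitably rescaled excess-energy quantity over shrinking parabolic cylinders, built from Lemma~\ref{lemma:SobolevV2} and Lemma~\ref{lemma:DG}, in order to derive the local sup-bound \eqref{eq:Linfty}.
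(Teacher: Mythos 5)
Your proposal is correct and is essentially the paper's own proof: the paper sets $r=b^{1/\varepsilon}$ and proves $y_m\le\theta_0 r^{-m}$ by induction, which is exactly your bound $y_m\le\theta_0 b^{-m/\varepsilon}$ written without the auxiliary symbol $r$, and the inductive step exploits the same normalization $\widetilde{C}\theta_0^\varepsilon=b^{-1/\varepsilon}$.
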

\begin{proof}
We show
\begin{equation}\label{eq:ymr}
y_{m} \leq \frac{\theta_{0}}{r^{m}} , \
m = 0, 1, 2, \ldots
\end{equation}
by inductive method,
where we will determine $r > 1$ later.
By assumption, (\ref{eq:ymr}) with $m=0$ holds.
Hence we now assume (\ref{eq:ymr}) holds,
and show (\ref{eq:ymr}) for $m+1$.
By the assumption (\ref{eq:ymcond})
and the induction hypothesis, we have
\[
y_{m+1} \leq \widetilde{C} b^{m} y_{m}^{1 + \varepsilon}
\leq \widetilde{C} b^{m}
\left( \frac{\theta_{0}}{r^{m}} \right)^{1 + \varepsilon}
= \frac{\theta_{0}}{r^{m+1}} \widetilde{C}
b^{m} \frac{\theta_{0}^{\varepsilon}}{r^{m \varepsilon - 1}} .
\]
Now we take $r = b^{1 / \varepsilon}$. Then we have
\[
y_{m+1} \leq
\frac{\theta_{0}}{r^{m+1}} \widetilde{C}
b^{m} \frac{\theta_{0}^{\varepsilon}}{r^{m \varepsilon - 1}}
= \frac{\theta_{0}}{r^{m+1}} \widetilde{C}
r \theta_{0}^{\varepsilon}
= \frac{\theta_{0}}{r^{m+1}} ,
\]
which is (\ref{eq:ymr}) for $m+1$.
\end{proof}
Now we are now ready to show the estimate (\ref{eq:Linfty}).
The estimate easily follows if we have the following lemma.
\begin{lemma}\label{lemma:Linfty}
Let $p > n + 2$.
Then a solution $u$ to {\rm (\ref{eq:parabolic})}
satisfies the estimate
\[
\lVert u \rVert_{L^{\infty} ( Q_{\rho} )}
\leq C_{\rho}
\left(
 \lVert u \rVert_{L^{2} ( Q_{2 \rho} )}
 + F_{0, 2 \rho}
\right) ,
\]
where we define $F_{0, 2 \rho}$ by {\rm (\ref{eq:F0rho})},
and $C_{\rho} > 0$ depends only on $n, \lambda , \Lambda , p$
and $\rho$.
\end{lemma}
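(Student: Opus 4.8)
The plan is to run the classical De~Giorgi level--set iteration on the truncations $(u-k)_{\pm}$, feeding the energy estimate of Lemma~\ref{lemma:DG} (resp.\ Lemma~\ref{lemma:DG}$^{\prime}$) into the parabolic Sobolev embedding of Lemma~\ref{lemma:SobolevV2} and then into the geometric--decay Lemma~\ref{lemma:ym}. First I would reduce to a normalized situation: if $\|u\|_{L^{2}(Q_{2\rho})}+F_{0,2\rho}=0$ the assertion is trivial, and otherwise, replacing $u$ by $u/(\|u\|_{L^{2}(Q_{2\rho})}+F_{0,2\rho})$ (the equation and the admissible class of data are invariant under this rescaling, with $F_{0,2\rho}$ scaled by the same factor), it suffices to produce a constant $\gamma=\gamma(n,\lambda,\Lambda,p,\rho)\ge 1$ such that, when $\|u\|_{L^{2}(Q_{2\rho})}\le 1$ and $F_{0,2\rho}\le 1$, one has $\|u\|_{L^{\infty}(Q_{\rho})}\le\gamma$.

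Set $\rho_{m}:=\rho(1+2^{-m})$ and $k_{m}:=\gamma(1-2^{-m})$, so that $\rho_{0}=2\rho$, $k_{0}=0$, $\rho_{m}\downarrow\rho$, $k_{m}\uparrow\gamma$, and choose cut--offs $\zeta_{m}\in C^{\infty}\bigl([t_{0}-\rho_{m}^{2},t_{0}];C_{0}^{\infty}(B_{\rho_{m}}(x_{0}))\bigr)$ with $\zeta_{m}\equiv 1$ on $Q_{\rho_{m+1}}$, $0\le\zeta_{m}\le 1$, $\zeta_{m}(\cdot,t_{0}-\rho_{m}^{2})=0$, and $\|\partial_{t}\zeta_{m}\|_{L^{\infty}}+\|\nabla\zeta_{m}\|_{L^{\infty}}^{2}\le C4^{m}/\rho^{2}$. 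Put $A_{m}:=\iint_{Q_{\rho_{m}}}(u-k_{m})_{+}^{2}\,dx\,dt$; since $\rho_{m}$ decreases and $k_{m}$ increases, $A_{m}$ is non-increasing, hence $A_{m}\le A_{0}\le 1$ for all $m$. Applying Lemma~\ref{lemma:DG} with $k=k_{m+1}$, $\rho=\rho_{m}$, $\zeta=\zeta_{m}$, then Lemma~\ref{lemma:SobolevV2} to $v=(u-k_{m+1})_{+}\zeta_{m}$, then H\"older's inequality with exponents $\frac{n+2}{n}$ and $\frac{n+2}{2}$ to $\iint_{Q_{\rho_{m}}}\bigl((u-k_{m+1})_{+}\zeta_{m}\bigr)^{2}$, and finally the Chebyshev--type bound $|Q_{\rho_{m}}\cap\{u>k_{m+1}\}|\le(k_{m+1}-k_{m})^{-2}A_{m}=4^{m+1}\gamma^{-2}A_{m}$ together with $\iint_{Q_{\rho_{m}}}(u-k_{m+1})_{+}^{2}\le A_{m}$ and $F_{0,\rho_{m}}\le F_{0,2\rho}\le 1$, one obtains a recursion of the form
\[
A_{m+1}\le C\Bigl[\frac{4^{m}}{\rho^{2}}A_{m}+\Bigl(\frac{4^{m+1}}{\gamma^{2}}A_{m}\Bigr)^{1-2/p}\Bigr]\Bigl(\frac{4^{m+1}}{\gamma^{2}}A_{m}\Bigr)^{2/(n+2)},
\]
with $C=C(n,\lambda,\Lambda)$.

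The first bracket term carries the power $A_{m}^{1+2/(n+2)}$ and the second carries $A_{m}^{1+\varepsilon}$ with $\varepsilon:=\frac{2}{n+2}-\frac{2}{p}$, which is \emph{positive precisely because $p>n+2$} --- this is the single place where the hypothesis enters. Using $A_{m}\le 1$ to replace $1+\frac{2}{n+2}$ by $1+\varepsilon$ and collecting the geometric factors, the recursion becomes $A_{m+1}\le\widetilde C\,b^{m}A_{m}^{1+\varepsilon}$ with $b=4^{1+2/(n+2)}$ and $\widetilde C=\widetilde C_{0}(n,\lambda,\Lambda,p,\rho)\bigl(\gamma^{-4/(n+2)}+\gamma^{-2-2\varepsilon}\bigr)$, which tends to $0$ as $\gamma\to\infty$. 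Consequently the threshold $\theta_{0}=\widetilde C^{-1/\varepsilon}b^{-1/\varepsilon^{2}}$ of Lemma~\ref{lemma:ym} tends to $+\infty$, so I fix $\gamma=\gamma(n,\lambda,\Lambda,p,\rho)\ge 1$ large enough that $\theta_{0}\ge 1\ge A_{0}$. Lemma~\ref{lemma:ym} then yields $A_{m}\to 0$; since $A_{m}\ge\iint_{Q_{\rho}}(u-k_{m})_{+}^{2}\to\iint_{Q_{\rho}}(u-\gamma)_{+}^{2}$ by dominated convergence, we get $u\le\gamma$ a.e.\ on $Q_{\rho}$, and the same argument applied via Lemma~\ref{lemma:DG}$^{\prime}$ to $(u-k_{m})_{-}$ gives $u\ge-\gamma$ a.e.\ on $Q_{\rho}$, hence $\|u\|_{L^{\infty}(Q_{\rho})}\le\gamma$. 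Undoing the normalization gives the statement with $C_{\rho}=\gamma$. The only genuinely delicate points are checking that the exponent on the measure factor exceeds $1$ (the role of $p>n+2$) and observing that the smallness hypothesis of Lemma~\ref{lemma:ym} can always be arranged by enlarging $\gamma$ rather than by any smallness of the data; the rest is routine bookkeeping with cut--offs.
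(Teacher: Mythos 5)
Your proof is correct and follows essentially the same route as the paper: the De~Giorgi level--set iteration feeding Lemma~\ref{lemma:DG} through Lemma~\ref{lemma:SobolevV2} and closing with the Chebyshev bound and Lemma~\ref{lemma:ym}. The only difference is presentational: you normalize $u$ by $\lVert u \rVert_{L^{2}(Q_{2\rho})}+F_{0,2\rho}$ and enlarge $\gamma$ to satisfy the smallness hypothesis of Lemma~\ref{lemma:ym}, whereas the paper keeps $u$ unscaled and instead imposes the three lower bounds (\ref{eq:k1}), (\ref{eq:k2}), (\ref{eq:k3}) on the level $k$ --- these are equivalent ways of packaging the same iteration.
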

\begin{proof}
First of all a letter $C$ denotes a general constant
depending only on $n, \Lambda , \lambda$ and $p$.
Now, let
$\rho_{m} := ( 1 + 2^{-m} ) \rho$ and
$k_{m} = k ( 2 - 2^{-m} )$
for $m = 0, 1, 2, \ldots$,
where we will determine $k > 0$ later.
For $m = 0, 1, 2, \ldots$,
we take cut-off functions
\begin{math}
\zeta_{m} \in C^{\infty} ( Q_{\rho_{m}} )
\end{math}
which satisfy
\begin{align*}
& 0 \leq \zeta_{m} \leq 1 \mbox{ in } Q_{\rho_{m}} , 
\displaybreak[1] \\
& \zeta_{m} = \left\{
 \begin{aligned}
 & 1 \mbox{ in } Q_{\rho_{m+1}} , \\
 & 0 \mbox{ in }
 Q_{\rho_{m}} \setminus Q_{( \rho_{m} + \rho_{m+1} ) / 2} ,
 \end{aligned}
\right. \displaybreak[1] \\
& \left\lVert
 \frac{\partial \zeta_{m}}{\partial t}
\right\rVert_{L^{\infty} ( Q_{\rho_{m}} )}
+ \lVert \nabla \zeta_{m} \rVert_{L^{\infty} ( Q_{\rho_{m}} )}^{2}
\leq \frac{C}{( \rho_{m} - \rho_{m+1} )^{2}} .
\end{align*}
We remark that 
$\zeta_{m} = 0$
on 
\begin{math}
B_{\rho_{m}} ( x_{0} ) \times \{ t_{0}- \rho^{2} \}
\cup \partial B_{\rho_{m}} ( x_{0} ) 
\times ( t_{0} - \rho^{2} , t_{0} )
\end{math}
in particular. 
By Lemmas~\ref{lemma:SobolevV2} and \ref{lemma:DG}, we have
\begin{align}
& \lVert
 ( u - k_{m+1} )_{+} \zeta_{m}
\rVert_{L^{2(n+2)/n} ( Q_{\rho_{m}} )}^{2} \notag \\
& \leq C \Bigl(
 \lVert ( u - k_{m+1} )_{+} \zeta_{m} \rVert_{
  L^{\infty} (
   t_{0} - \rho_{m}^{2} , t_{0} ; L^{2} ( B_{\rho_{m}} ( x_{0} ) )
  )
 }^{2} \notag \\
 & \hspace*{10ex} \mbox{}
 + \bigl\lVert
  \nabla \bigl( ( u - k_{m+1} )_{+} \zeta_{m} \bigr)
 \bigr\rVert_{L^{2} ( Q_{\rho_{m}} )}^{2}
\Bigr) \notag \displaybreak[1] \\
& \leq C \Biggl[
 \left(
  \left\lVert
   \frac{\partial \zeta_{m}}{\partial t}
  \right\rVert_{L^{\infty} ( Q_{\rho_{m}} )}
  + \lVert \nabla \zeta_{m} \rVert_{L^{\infty} ( Q_{\rho_{m}} )}^{2}
 \right)
 \lVert ( u - k_{m+1} )_{+} \rVert_{L^{2} ( Q_{\rho_{m}} )}^{2} 
 \notag \\
 & \hspace*{10ex} \mbox{}
 + F_{0, \rho_{m}}^{2}
 \bigl\lvert
  Q_{\rho_{m}}  \cap \{ u(x,t) > k_{m+1} \}
 \bigr\rvert^{1 - 2 / p}
\Biggr] \notag \displaybreak[1] \\
& \leq C \Biggl[
 \frac{2^{2 m}}{\rho^{2}}
 \lVert ( u - k_{m+1} )_{+} \rVert_{L^{2} ( Q_{\rho_{m}} )}^{2}
 + F_{0, 2 \rho}^{2}
 \bigl\lvert A_{m} ( k_{m+1} ) \bigr\rvert^{1 - 2 / p}
\Biggr] , \label{eq:a001}
\end{align}
where $A_{m} (l) := Q_{\rho_{m}} \cap \{ u(x,t) > l \}$
for $l \in \mathbb{R}$.
Now we take $k > 0$ as
\begin{equation}\label{eq:k1}
k \geq \rho^{1 - (n+2) / p}  F_{0, 2 \rho} .
\end{equation}
Then we have
\begin{align*}
& \lVert
 ( u - k_{m+1} )_{+} \zeta_{m}
\rVert_{L^{2(n+2)/n} ( Q_{\rho_{m}} )}^{2} \\
& \leq C \Biggl[
 \frac{2^{2 m}}{\rho^{2}}
 \lVert ( u - k_{m+1} )_{+} \rVert_{L^{2} ( Q_{\rho_{m}} )}^{2}
 + \frac{k^{2}}{\rho^{2 ( 1 - (n+2) / p )}}
 \bigl\lvert A_{m} ( k_{m+1} ) \bigr\rvert^{1 - 2 / p}
\Biggr]
\end{align*}
by the estimate (\ref{eq:a001}).
By defining
\begin{math}
\varphi_{m} := \lVert
 ( u - k_{m} )_{+}
\rVert_{L^{2} ( Q_{\rho_{m}} )}^{2},
\end{math}
we have
\begin{align}
\varphi_{m+1}
& = \lVert
 ( u - k_{m+1} )_{+} \zeta_{m}
\rVert_{L^{2} ( Q_{\rho_{m+1}} )}^{2} 
\leq \lVert
 ( u - k_{m+1} )_{+} \zeta_{m}
\rVert_{L^{2} ( Q_{\rho_{m}} )}^{2} \notag \\
& \leq \lvert A_{m} ( k_{m+1} ) \rvert^{2/(n+2)}
\lVert
 ( u - k_{m+1} )_{+} \zeta_{m}
\rVert_{L^{2(n+2)/n} ( Q_{\rho_{m}} )}^{2} \notag 
\displaybreak[1] \\
& \leq C \lvert A_{m} ( k_{m+1} ) \rvert^{2/(n+2)} \notag \\
& \hspace*{3ex} \mbox{} \times
\Biggl[
 \frac{2^{2 m}}{\rho^{2}}
 \lVert ( u - k_{m+1} )_{+} \rVert_{L^{2} ( Q_{\rho_{m}} )}^{2}
 + \frac{k^{2}}{\rho^{2 ( 1 - (n+2) / p )}}
 \bigl\lvert A_{m} ( k_{m+1} ) \bigr\rvert^{1 - 2 / p}
\Biggr] \notag \displaybreak[1] \\
& \leq C \lvert A_{m} ( k_{m+1} ) \rvert^{2/(n+2)}
\Biggl[
 \frac{2^{2 m}}{\rho^{2}} \varphi_{m}
 + \frac{k^{2}}{\rho^{2 ( 1 - (n+2) / p )}}
 \bigl\lvert A_{m} ( k_{m+1} ) \bigr\rvert^{1 - 2 / p}
\Biggr] , \label{eq:a002}
\end{align}
where we used H{\"o}lder's inequality
and the estimate
\[
\lVert ( u - k_{m+1} )_{+} \rVert_{L^{2} ( Q_{\rho_{m}} )}^{2}
\leq \lVert ( u - k_{m} )_{+} \rVert_{L^{2} ( Q_{\rho_{m}} )}^{2}
= \varphi_{m} .
\]
On the other hand, we have
\begin{align*}
\varphi_{m}
& = \lVert ( u - k_{m} )_{+} \rVert_{L^{2} ( Q_{\rho_{m}} )}^{2}
\geq \iint_{A_{m} ( k_{m+1} )}
 ( u - k_{m} )_{+}^{2} \,
d x \, d t \\
& \geq \iint_{A_{m} ( k_{m+1} )}
 ( k_{m+1} - k_{m} )_{+}^{2} \,
d x \, d t
= \frac{k^{2}}{2^{2m+2}}
\lvert A_{m} ( k_{m+1} ) \rvert ,
\end{align*}
that is,
\begin{equation}\label{eq:a003}
\lvert A_{m} ( k_{m+1} ) \rvert
\leq \frac{2^{2m+2}}{k^{2}} \varphi_{m} .
\end{equation}
By (\ref{eq:a002}) and (\ref{eq:a003}), we have
\begin{align}
\varphi_{m+1}
& \leq C 2^{2 m \left( 1 + \frac{2}{n+2} \right)} \notag \\
& \hspace*{3ex} \mbox{} \times \left[
 \rho^{-2} k^{- \frac{4}{n+2}} \varphi_{m}^{1 + \frac{2}{n+2}}
 + \rho^{- 2 \left( 1 - \frac{n+2}{p} \right)}
 k^{- \left( \frac{4}{n+2} - \frac{4}{p} \right)}
 \varphi_{m}^{1 + \frac{2}{n+2} - \frac{2}{p}}
\right] .\label{eq:a004}
\end{align}
We now take $k$ as
\begin{equation}\label{eq:k2}
k \geq \left(
 \frac{1}{\lvert Q_{2 \rho} \rvert}
 \iint_{Q_{2 \rho}} u^{2} \, d x \, d t
\right)^{1/2} .
\end{equation}
Then we have
\[
\varphi_{m}
\leq \iint_{Q_{\rho_{m}}} u^{2} \, d x \, d t
\leq \iint_{Q_{2 \rho}} u^{2} \, d x \, d t
\leq \lvert Q_{2 \rho} \rvert k^{2} ,
\]
that is,
\[
\varphi_{m}^{2/p} \leq 
\lvert Q_{2 \rho} \rvert^{2/p} k^{4/p} .
\]
By this inequality and (\ref{eq:a004}), we have
\begin{align}
\varphi_{m+1}
& \leq C 2^{2 m \left( 1 + \frac{2}{n+2} \right)}
\varphi_{m}^{1 + \frac{2}{n+2} - \frac{2}{p}}
\left[
 \rho^{-2} k^{- \frac{4}{n+2}}
 \varphi_{m}^{2/p}
 + \rho^{- 2 \left( 1 - \frac{n+2}{p} \right)}
 k^{- \left( \frac{4}{n+2} - \frac{4}{p} \right)}
\right] \notag \\
& \leq C 2^{2 m \left( 1 + \frac{2}{n+2} \right)}
\varphi_{m}^{1 + \frac{2}{n+2} - \frac{2}{p}} \notag \\
& \hspace*{3ex} \mbox{} \times 
\left[
 \rho^{-2} k^{- \frac{4}{n+2}}
 \lvert Q_{2 \rho} \rvert^{2/p} k^{4/p}
 + \rho^{- 2 \left( 1 - \frac{n+2}{p} \right)}
 k^{- \left( \frac{4}{n+2} - \frac{4}{p} \right)}
\right] \notag \displaybreak[1] \\
& = C 2^{2 m \left( 1 + \frac{2}{n+2} \right)}
\rho^{- 2 \left( 1 - \frac{n+2}{p} \right)}
k^{- \frac{4}{n+2} \left( 1 - \frac{n+2}{p} \right)}
\varphi_{m}^{1 + \frac{2}{n+2} - \frac{2}{p}} .
\label{eq:a005}
\end{align}
Now we denote
$y_{m} := k^{-2} \lvert Q_{2 \rho} \rvert^{-1} \varphi_{m}$.
Then by (\ref{eq:a005}), we have
\begin{equation}\label{eq:a006}
y_{m+1}
\leq C 2^{2 m \left( 1 + \frac{2}{n+2} \right)}
y_{m}^{1 + \left( \frac{2}{n+2} - \frac{2}{p} \right)} ,
\end{equation}
which is the second condition of (\ref{eq:ymcond})
with
\begin{equation}\label{eq:a008}
\widetilde{C} = C, \
b = 2^{2 \left( 1 + \frac{2}{n+2} \right)}
\mbox{ and } \varepsilon = \frac{2}{n+2} - \frac{2}{p} .
\end{equation}
Then
$\lim_{m \rightarrow \infty} y_{m} = 0$
if
\begin{equation}\label{eq:a007}
y_{0} \leq C^{- 1 / \varepsilon} b^{- 1 / \varepsilon^{2}}
=: \theta_{0}
\end{equation}
by Lemma~\ref{lemma:ym},
where $b$ and $\varepsilon$ are defined by (\ref{eq:a008})
and $C$ is the constant $C$ in (\ref{eq:a006}).
We remark that the condition (\ref{eq:a007}) is equivalent to
\begin{equation}\label{eq:a009}
\lVert ( u - k )_{+} \rVert_{L^{2} ( Q_{2 \rho} )}^{2}
\leq \theta_{0} k^{2} \lvert Q_{2\rho} \rvert .
\end{equation}
Now we take $k$ as
\begin{equation}\label{eq:k3}
k^{2} \geq \frac{1}{\theta_{0} \lvert Q_{2\rho} \rvert}
\lVert u \rVert_{L^{2} ( Q_{2 \rho} )}^{2} .
\end{equation}
Then the condition (\ref{eq:a009}), i.e.
the condition (\ref{eq:a007}) is satisfied.

Summing up, if we take $k$ such that
the conditions (\ref{eq:k1}), (\ref{eq:k2}) and (\ref{eq:k3}) are satisfied,
then we have
\begin{math}
\lim_{m \rightarrow \infty} y_{m} = 0
\end{math}.
On the other hand, since
\begin{align*}
y_{m} 
& = \frac{1}{k^{2} \lvert Q_{2 \rho} \rvert} \varphi_{m}
= \frac{1}{k^{2} \lvert Q_{2 \rho} \rvert}
\lVert
 ( u - k_{m} )_{+}
\rVert_{L^{2} ( Q_{\rho_{m}} )}^{2} \\
& \rightarrow \frac{1}{k^{2} \lvert Q_{2 \rho} \rvert}
\lVert
 ( u - 2 k )_{+}
\rVert_{L^{2} ( Q_{\rho} )}^{2}
\mbox{ as } m \rightarrow \infty .
\end{align*}
Then we have
$\lVert ( u - 2 k )_{+} \rVert_{L^{2} ( Q_{\rho} )}^{2} = 0$,
that is,
\begin{equation}\label{eq:Linftyresult}
u \leq 2 k \mbox{ a.e.\ in } Q_{\rho} .
\end{equation}
Now we take $k$ as
\[
k = \frac{1}{\sqrt{\theta_{0} \lvert Q_{2 \rho} \rvert}}
\lVert u \rVert_{L^{2} ( Q_{2 \rho})}
+ \rho^{1 - (n+2) / p} F_{0, 2 \rho} ,
\]
which satisfies
the conditions (\ref{eq:k1}), (\ref{eq:k2}) and (\ref{eq:k3}).
Hence we have (\ref{eq:Linftyresult}), which is
\[
\sup_{Q_{\rho}} u \leq C_{\rho}
\left(
 \lVert u \rVert_{L^{2} ( Q_{2 \rho} )} + F_{0, 2 \rho}
\right) .
\]
Replacing Lemma~\ref{lemma:DG}
by Lemma~\ref{lemma:DG}$^{\prime}$
and doing the same argument, we can obtain
\[
- u \leq C_{\rho}
\left(
 \lVert u \rVert_{L^{2} ( Q_{2 \rho} )} + F_{0, 2 \rho}
\right) \mbox{ in } Q_{\rho}
\]
and thus the proof has been completed.
\end{proof}
\section*{Acknowledgments}
The first author is supported
by Postdoctoral Fellowship for Foreign Researchers
from the Japan Society for the Promotion of Science.
The third and fourth authors are supported
by Research Fellowships of the Japan Society
for the Promotion of Science for Young Scientists and Grant-in-Aid for Scientific Research (B)
(No.\ 22340023) of Japan Society for Promotion of Science, respectively.

\end{document}